\newtheorem{thm}{Theorem}[section]
\newtheorem{cor}[thm]{Corollary}
\newtheorem{lem}[thm]{Lemma}
\newtheorem{prop}[thm]{Proposition}
\theoremstyle{definition}
\newtheorem{defn}[thm]{Definition}
\theoremstyle{remark}
\newtheorem{rem}[thm]{Remark}
\theoremstyle{theorem}
\theoremstyle{example}
\newtheorem{ex}[thm]{Example}
\begin{document}
\title[]{Equivariant formality of Hamiltonian transversely symplectic foliations}

%---------------------------------------
\author{Yi Lin}

\address{Yi Lin \\ Department of Mathematical Sciences \\  Georgia Southern University \\Statesboro, GA, 30460 USA}

\email{yilin@georgiasouthern.edu}

%---------------------------------------

\author{Xiangdong Yang}

\address{Xiangdong Yang \\ Department of Mathematics \\ Chongqing University \\ Chongqing 401331 P. R. China}

\email{xiangdongyang2009@gmail.com}

%--------------------------------------

\thanks{The second author is supported by the National Natural Science Foundation of China (No. 11571242 and No. 11701051) and the China Scholarship Council.}

\subjclass[2010]{57S25; 57R91}

\keywords{transversely symplectic foliations, Hamiltonian actions, equivariant formality}
\date{July 18, 2017}
%\dedicatory{}

%\commby{}

% -----------------------------------------------------------

\begin{abstract}
Consider the Hamiltonian action of a compact connected Lie group on a transversely symplectic foliation
which satisfies the transverse hard Lefschetz property.
We establish an equivariant formality theorem and an equivariant symplectic $d\delta$-lemma in this setting.
As an application, we show that if the foliation is also Riemannian,
then there exists a natural formal Frobenius manifold structure on the equivariant basic cohomology of the foliation.
\end{abstract}

% -----------------------------------------------------------
\maketitle
% -----------------------------------------------------------

%\tableofcontents
\section{Introduction}

Reinhart \cite{R59} introduced the basic cohomology of foliations in late 1950's as a cohomology theory for the space of leaves.
It has become one of fundamental topological invariants for foliations, especially for Riemannian foliations.
An important sub-class of Riemannian foliations are Killing foliations,
as any Riemannian foliation on a simply-connected manifold is Killing.
According to Molino's structure theory \cite{Mo88}, for Killing foliations,
the leaf closures are the orbits of leaves under the action of an abelian Lie algebra of transverse Killing fields,
called the structural Killing algebra.
Goertsches-T\"{o}ben \cite{GT10} introduced the notion of \emph{equivariant basic cohomology},
and used it to study the transverse actions of structural Killing algebras on Killing foliations.
Among other things, they proved a Borel type localization theorem,
and established the equivariant formality  in the presence of a basic Morse-Bott function whose critical set is the union of closed leaves.
As a result, they are able to compute the basic Betti number in many concrete examples,
and relate the basic cohomology to the dynamical  aspects of a foliation.

Let $(M,\eta,g)$ be a compact $K$-contact manifold with a Reeb vector field $\xi$,
and let $T$ be the closure of the Reeb flow in the isometry group $\text{Isom}(M,g)$.
Then $T$ is a compact connected torus.
Moreover, the characteristic Reeb foliation is Killing,
with a structural Killing algebra isomorphic to $\text{Lie}(T)/\text{span}\{\xi\}$.
It is well known that in this situation a generic component of the contact moment map $\Phi: M\rightarrow \mathfrak{t}^*$  is a Morse-Bott function, whose critical set is the union of closed Reeb orbits.
In particular, the results established in \cite{GT10} apply to the transverse actions of the structural Killing algebras on $K$-contact manifolds,
and yield the equivariant formality theorem in this case (cf. \cite{GNT12}).

It is noteworthy that the characteristic foliation of the Reeb vector field of a $K$-contact manifold $(M,\eta,g)$ is \emph{transversely symplectic};
in addition, the transverse action of the structural Killing algebra is Hamiltonian in the sense of Souriau \cite{So97}.
In view of Goertsches-T\"{o}ben's equivariant formality result on $K$-contact manifolds,
one naturally wonders
\emph{if the equivariant formality theorem would continue to hold for a more general class of Hamiltonian actions on transversely symplectic foliations.}

On symplectic manifolds, there are two approaches to proving the Kirwan-Ginzburg equivariant formality theorem.
The first approach is Morse theoretic, which works for arbitrary compact Hamiltonian symplectic manifolds (cf. \cite{Gin87,Kir84}).
The second approach is symplectic Hodge theoretic, which needs to assume that the underlying symplectic manifold has the hard Lefschetz property (cf. \cite{LS04}).
On the upside, it provides an improved version of the equivariant formality theorem, which asserts that any de Rham cohomology class has a canonical equivariant extension.

In an accompanying paper \cite{L16}, the first author extended symplectic Hodge theory to any transversely symplectic manifold with the \emph{transverse $s$-Lefschetz property}, and established the symplectic $d\delta$-lemma in this framework.
In the present article, for Hamiltonian actions of compact connected Lie groups on transversely symplectic foliations,
we apply the symplectic Hodge theory to prove the following result.
\begin{thm}[Theorem \ref{eq-formality}]\label{main thm1}
Consider the Hamiltonian action of a compact connected Lie group $G$ on a compact transversely symplectic foliation $(M, \mathcal{F}, \omega)$.
Suppose that $(M,\mathcal{F},\omega)$ satisfies the transverse hard Lefschetz property.
Then there is a canonical $S(\mathfrak{g}^{*})^{G}$-module isomorphism from the equivariant basic cohomology $H_{G}(M,\mathcal{F})$ to
$
S(\mathfrak{g}^{*})^{G}\otimes H(M,\mathcal{F}).
$
\end{thm}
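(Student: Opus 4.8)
The plan is to realise the isomorphism concretely, through canonical equivariantly closed extensions of transverse symplectic harmonic forms, following the symplectic Hodge-theoretic route of Lin--Sjamaar \cite{LS04} but carried out inside the basic de Rham complex of $(M,\mathcal{F})$. I work in the Cartan model: $H_{G}(M,\mathcal{F})$ is the cohomology of $\Omega_{G}(M,\mathcal{F}):=(S(\mathfrak{g}^{*})\otimes\Omega(M,\mathcal{F}))^{G}$ with differential $d_{G}=1\otimes d-\sum_{a}\xi^{a}\otimes\iota_{\xi_{a}}$, where $\Omega(M,\mathcal{F})$ denotes the basic forms; crucially $d_{G}$ is $S(\mathfrak{g}^{*})$-linear. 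From \cite{L16} I take the transverse symplectic star operator $\star$, the transverse symplectic codifferential $\delta$, and the Lefschetz pair $(L,\Lambda)$ acting on $\Omega(M,\mathcal{F})$, the relations $\delta^{2}=0$ and $d\delta+\delta d=0$, and — the essential input — the transverse symplectic $d\delta$-lemma, which applies here because the transverse hard Lefschetz hypothesis implies the transverse $s$-Lefschetz property for all $s$: one has $\operatorname{im}d\cap\ker\delta=\operatorname{im}d\delta=\ker d\cap\operatorname{im}\delta$ on $\Omega(M,\mathcal{F})$, and in particular every basic cohomology class has a transverse symplectic harmonic representative, i.e.\ one $\alpha$ with $d\alpha=0=\delta\alpha$.

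The key elementary observation is a moment-map identity. Let $\Phi\colon M\to\mathfrak{g}^{*}$ be the transverse moment map; its components $\langle\Phi,\xi\rangle$ are basic functions with $\iota_{\xi_{M}}\omega=-d\langle\Phi,\xi\rangle$. Since $\Lambda$ is algebraic it commutes with multiplication $\mu_{f}$ by any basic function $f$; feeding this and the formula expressing $\delta$ as a graded commutator of $d$ and $\Lambda$ into the graded Jacobi identity yields, on $\Omega(M,\mathcal{F})$,
\begin{equation*}
\iota_{\xi_{M}}=\pm\,[\,\delta,\mu_{\langle\Phi,\xi\rangle}\,].
\end{equation*}
Hence if $\alpha$ is a $G$-invariant basic form with $d\alpha=0=\delta\alpha$, then $\iota_{\xi_{M}}\alpha=\pm\,\delta(\langle\Phi,\xi\rangle\,\alpha)$ is $\delta$-exact, while Cartan's formula and invariance give $d(\iota_{\xi_{M}}\alpha)=0$; by the $d\delta$-lemma $\iota_{\xi_{M}}\alpha\in\operatorname{im}(d\delta)$, so in particular it is $d$-exact. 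Now run the extension procedure: given $a\in H(M,\mathcal{F})$, pick its transverse symplectic harmonic representative $\alpha_{0}=\alpha$, taken $G$-invariant by averaging over the compact group $G$ (all the operators above preserve basic forms and commute with the $G$-action). Seek $\tilde a=\sum_{j\ge 0}\alpha_{j}$ with $\alpha_{j}\in\bigl(S^{j}(\mathfrak{g}^{*})\otimes\Omega^{\deg a-2j}(M,\mathcal{F})\bigr)^{G}$ and $d_{G}\tilde a=0$; collecting by bidegree this is the recursion $d\alpha_{j}=\sum_{a}\xi^{a}\iota_{\xi_{a}}\alpha_{j-1}$, which terminates after $\lfloor\tfrac12\deg a\rfloor$ steps. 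The displayed identity makes the $j=1$ right-hand side $d$-exact; choosing each $\alpha_{j}$ through the canonical homotopy furnished by the hard Lefschetz decomposition (so the $\alpha_{j}$ stay $\delta$-closed) keeps the argument going at every stage and makes $\tilde a$ depend canonically on $a$. Setting $\kappa(1\otimes a):=[\tilde a]$ and extending $S(\mathfrak{g}^{*})^{G}$-linearly gives a canonical $S(\mathfrak{g}^{*})^{G}$-module homomorphism
\begin{equation*}
\kappa\colon S(\mathfrak{g}^{*})^{G}\otimes H(M,\mathcal{F})\To H_{G}(M,\mathcal{F}).
\end{equation*}

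It remains to check $\kappa$ is bijective. The restriction chain map $r\colon\Omega_{G}(M,\mathcal{F})\to\Omega(M,\mathcal{F})$ (set the generators of $S(\mathfrak{g}^{*})$ to zero) satisfies $r(\tilde a)=\alpha$, so $r_{*}\kappa(1\otimes a)=a$ and $r_{*}$ is onto. Because $d_{G}$ is $S(\mathfrak{g}^{*})$-linear, the filtration of $\Omega_{G}(M,\mathcal{F})$ by polynomial degree produces a spectral sequence of $S(\mathfrak{g}^{*})^{G}$-modules with $E_{1}=S(\mathfrak{g}^{*})^{G}\otimes H(M,\mathcal{F})$ (here $G$ connected acts trivially on $H(M,\mathcal{F})$); the existence, for every $a\in H(M,\mathcal{F})$, of a genuine $d_{G}$-cocycle $\tilde a$ extending $\alpha$ forces all higher differentials to vanish on the $E_{1}^{0,\ast}$ generators, hence — by $S(\mathfrak{g}^{*})^{G}$-linearity — everywhere, so the spectral sequence collapses at $E_{1}$. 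Thus $\operatorname{gr}H_{G}(M,\mathcal{F})\cong S(\mathfrak{g}^{*})^{G}\otimes H(M,\mathcal{F})$, and the explicit section $\kappa$ lifts this to the asserted module isomorphism.

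The main obstacle is that the relevant Hodge theory is purely formal, not elliptic: since $d\delta+\delta d=0$ there is no symplectic Laplacian and no honest Green operator, so the existence of harmonic representatives, the solvability of the recursion $d\alpha_{j}=\sum_{a}\xi^{a}\iota_{\xi_{a}}\alpha_{j-1}$ at every stage, and — most delicately — the \emph{canonicity} of the successive choices must all be extracted from the transverse hard Lefschetz $\mathfrak{sl}_{2}$-structure and the $d\delta$-lemma alone, exactly as in \cite{LS04}, now with the extra bookkeeping that every form remain basic and every operator stay compatible with the transverse geometry and with the $G$-action. An alternative organisation first establishes the equivariant transverse $d_{G}\delta_{G}$-lemma, attaching $\delta_{G}$ to the equivariantly closed form $\omega_{G}=\omega+\Phi$, and then deduces the collapse by the same formal mechanism through which the $\partial\bar\partial$-lemma forces degeneration of a spectral sequence; this is the companion result advertised in the abstract.
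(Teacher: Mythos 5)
Your argument is essentially correct and uses the same Hodge-theoretic toolkit as the paper (the transverse $d\delta$-lemma from \cite{L16} and the moment-map identity $\iota(\xi)\alpha=\Phi^{\xi}\delta\alpha-\delta(\Phi^{\xi}\alpha)$), but it is organized differently. The paper never builds extensions by hand: it introduces the double subcomplex $\Omega^{\delta}_{G,\mathrm{bas}}=\ker\delta\cap\Omega_{G,\mathrm{bas}}$ and compares the three spectral sequences of the zig-zag $\Omega_{G,\mathrm{bas}}\leftarrow\Omega^{\delta}_{G,\mathrm{bas}}\rightarrow H(\Omega_{G,\mathrm{bas}},\delta)$ (Mathieu/Lin--Sjamaar style), showing via Lemmas \ref{lem3.4} and \ref{trivial-double-complex} that all three have the same first page $S(\mathfrak{g}^{*})^{G}\otimes H(M,\mathcal{F})$, whence both arrows are quasi-isomorphisms of $S(\mathfrak{g}^{*})^{G}$-modules; the canonical section $s$ is then a corollary of the isomorphism, not an ingredient of its proof. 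You instead construct the section first, by the Cartan-model recursion $d\alpha_{j}=\sum_a\xi^{a}\iota(\xi_{a})\alpha_{j-1}$ starting from an invariant symplectic-harmonic representative, and then invoke the standard ``a section of $p$ plus $S(\mathfrak{g}^{*})^{G}$-linearity forces $E_{1}$-degeneration'' mechanism. This buys an explicit cochain-level extension procedure, at the cost of two points you should tighten. First, your displayed exactness argument (harmonic plus invariant $\Rightarrow$ $\iota(\xi_M)\alpha\in\operatorname{im}d\delta$) only runs at the first step; for $j\geq 2$ the form $\alpha_{j}$ is no longer $d$-closed, and the $d$-closedness of the new obstruction must come from $d\partial+\partial d=0$ and $\partial^{2}=0$ applied to the previous equation, i.e. $d(\partial\alpha_{j})=-\partial(d\alpha_{j})=\pm\partial^{2}\alpha_{j-1}=0$, while its $\delta$-exactness comes from keeping $\alpha_{j}$ $\delta$-exact via the moment-map identity; you gesture at this but do not state it, and you should also note that invariance and polynomial dependence of the chosen primitives are arranged by averaging over $G$. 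Second, your claim of \emph{canonicity} is weaker than the paper's: symplectic-harmonic representatives and $d\delta$-primitives are not unique (there is no elliptic Laplacian), so $\kappa$ as you build it depends on choices unless you prove independence on cohomology, which you do not; the paper sidesteps this entirely because its isomorphism is induced by chain maps with no choices. With fixed linear choices your $\kappa$ is still a well-defined module map with $r_{*}\kappa(1\otimes a)=a$, so your degeneration argument does deliver the isomorphism of $S(\mathfrak{g}^{*})^{G}$-modules asserted in the theorem; only the adjective ``canonical'' needs the extra care or the paper's zig-zag formulation.
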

It is important to note that on a transversely symplectic foliation, components of a moment map are in general not Morse-Bott functions,
unless the action satisfies the so called \emph{clean condition} discovered by Lin-Sjamaar in \cite{LS16}.
However, a striking feature of our Hodge theoretic approach is that it would continue to work, even when the action is not clean,
as long as the transverse hard Lefschetz property is satisfied.

On a compact symplectic manifold with the hard Lefschetz property, Merkulov \cite{Mer98} established the symplectic $d\delta$-lemma,
and used it to produce a formal Frobenius manifold structure on the de Rham cohomology of the symplectic manifold.
Independently, Cao-Zhou \cite{CZ99,CZ00} proved similar results on the ordinary and equivariant de Rham cohomology of K\"ahler manifolds.
For Hamiltonian Lie group actions on transversely symplectic foliations with the transvere hard Lefschetz property,
our method yields an equivariant version of the symplectic $d\delta$-lemma on basic forms.
As an application of this result,
we show that there is a formal Frobenius manifold structure on the equivariant basic cohomology of the foliation (Theorem \ref{construction-Frobenius-manifolds-eq-case}).
This simultaneously generalizes the constructions of Merkulov and Cao-Zhou.

Transversely symplectic foliations are naturally related to different areas in differential geometry.
Reeb characteristic foliations in both contact and co-symplectic geometries are clearly transversely symplectic.
Moreover, leaf spaces of transversely symplectic foliations include symplectic orbifolds (in the sense of Satake \cite{Sa57}) and symplectic quasi-folds \cite{Pra01} as special examples.
In many known cases, transversely symplectic foliations arise as taut K\"ahler foliations,
which are known  to have the transverse hard Lefschetz property (cf. \cite{Ka90}).
The results proved in this paper apply to these situations,
and yield new examples of $dGBV$-algebras whose cohomologies carry the structure of a formal Frobenius manifold.

This paper is organized as follows.
In Section 2 we review symplectic Hodge theory on transversely symplectic foliations.
In Section 3, we establish an equivariant formality theorem for the Hamiltonian action of  a compact connected Lie group on a transversely symplectic foliation.
We also obtain an equivariant version of the symplectic $d\delta$-lemma on transversely symplectic foliations.
In Section 4, we show that there exists a formal Frobenius manifold structure on the equivariant basic cohomology of a Hamiltonian transversely symplectic foliation that satisfies the transverse hard Lefschetz property.
In Section 5, we present some concrete examples of transversely symplectic foliations, which are also Riemannian, and which satisfy the transverse hard Lefschetz property.

\subsection*{Acknowledgements}
Much of this joint work is completed while the first author was visiting Sichuan University in the spring of 2016.
He would like to  thank the School of Mathematics and the geometry and topology group there for providing him with an excellent working environment.
The second author would like to thank Prof. Guosong Zhao for his constant encouragement and moral support over the years.
Both authors are grateful to  Prof. Xiaojun Chen for his interest in this work, and for many useful discussions.
Finally, the authors would like to thank the anonymous referee for helpful comments and suggestions.

\section{Hodge theory on transversely symplectic foliations}

In this section, we review the elements of transversely symplectic Hodge theory to set up the stage.
We refer to \cite{Br88} and \cite{Y96} for general background on symplectic Hodge theory,
and to \cite{L16} for a detailed exposition on symplectic Hodge theory on foliations.

Assume that $\mathcal{F}$ is a foliation on a smooth manifold $M$ of co-dimension $m$.
Let $\Xi(M)$ be the Lie algebra of smooth vector fields on $M$,
and let $\Xi(\mathcal{F})\subset\Xi(M)$ be the Lie sub-algebra of vector fields which are tangent to the leaves of $\mathcal{F}$.
We say that an element $X\in\Xi(M)$ is \emph{foliate}, if $[X,Y]\in\Xi(\mathcal{F})$ for any $Y\in\Xi(\mathcal{F})$.
In particular, the set of foliate fields, denoted by $L(M,\mathcal{F})$, is a Lie sub-algebra of $\Xi(M)$,
since it is the normalizer of $\Xi(\mathcal{F})$ in $\Xi(M)$.
A \emph{transverse vector field} is a smooth section of   $TM/T\mathcal{F}$ that is induced by a foliate vector field.
It is easy to see that the set of transverse fields $l(M,\mathcal{F})=L(M,\mathcal{F})/\Xi(\mathcal{F})$ also admits a Lie algebra structure with
an induced Lie bracket from $L(M,\mathcal{F})$.

The space of \emph{basic forms} on $M$ is defined as follows.
$$
\Omega(M,\mathcal{F})=\bigl\{\alpha\in\Omega(M)\mid\iota(X)\alpha=\mathcal{L}(X)\alpha=0,\,\text{for all}\,X\in\Xi(\mathcal{F})\bigr\}.
$$
Since the exterior differential operator $d$ preserves basic forms,
we obtain a sub-complex of the de Rham complex $\{\Omega^{*}(M),d\}$,
called the \emph{basic de Rham complex} as follows.
$$
\xymatrix@C=0.5cm{
  \cdots \ar[r] & \Omega^{k-1}(M,\mathcal{F}) \ar[r]^{\quad d} & \Omega^{k}(M,\mathcal{F}) \ar[r]^{d} & \Omega^{k+1}(M,\mathcal{F}) \ar[r]^{\qquad d} & \cdots }.
$$

The cohomology of the basic de Rham complex $\{\Omega^{*}(M,\mathcal{F}),d\}$,
denoted by $H(M,\mathcal{F})$, is called the \emph{basic cohomology} of $M$ with respect to the foliation $\mathcal{F}$.
If $M$ is connected, then $H^{0}(M,\mathcal{F})\cong\mathbb{R}^{1}$.
In general, the group $H^{k}(M,\mathcal{F})$ may be infinite-dimensional for $k\geq2$.
However, if $M$ is a closed oriented manifold and if $\mathcal{F}$ is a Riemannian foliation,
then the basic cohomology is finite-dimensional;
moreover, we have either $H^{m}(M,\mathcal{F})=0$ or $H^{m}(M,\mathcal{F})=\mathbb{R}$ (cf. \cite[Th\'{e}or\`{e}m 0.]{KSH85}).
In particular, a Riemannian foliation $\mathcal{F}$ on a closed manifold $M$ is said to be \emph{taut}, if $H^{m}(M,\mathcal{F})=\mathbb{R}$.

\begin{defn}\label{transverse-sym}(\cite{H70})
Let $\mathcal{F}$ be a foliation on a smooth manifold $M$,
and let $P$ be the integrable subbundle of $TM$ associated to $\mathcal{F}$.
We say that $\mathcal{F}$ is a \emph{transversely symplectic foliation},
if there exists a closed 2-form $\omega$, called the \emph{transversely symplectic form},
such that for each $x\in M$, the kernel of $\omega_x$ coincides with the fiber of $P$ at $x$.
\end{defn}

Let $(M,\mathcal{F},\omega)$ be a transversely symplectic foliation of co-dimension $2n$.
The transversely symplectic form $\omega$ induces a non-degenerate bi-linear paring $B(\cdot,\cdot)$ on $\Omega^{p}(M,\mathcal{F})$,
which in turn gives rise to the \emph{symplectic Hodge star operator} $\star$ on $\Omega^{p}(M,\mathcal{F})$ as follows.
$$
\beta\wedge\star \alpha=B(\alpha,\beta)\frac{\omega^{n}}{n!},
$$
for any $\alpha,\beta\in\Omega^{p}(M,\mathcal{F})$.
The bi-linear pairing $B(\cdot, \cdot)$ is symmetric when $p$ is even, and skew-symmetric when $p$ is odd.
It follows easily from the definition that
\begin{equation} \label{star-adjoint-identity}
\beta \wedge \star \alpha = \star \beta \wedge \alpha,\,\,\,\,\star^2=\textmd{id}.
\end{equation}
The transpose operator $\delta$ of $d$ is defined by
$$
% \nonumber to remove numbering (before each equation)
  \delta:\Omega^{p}(M,\mathcal{F}) \rightarrow \Omega^{p-1}(M,\mathcal{F}),\,\,\,
  \alpha \mapsto (-1)^{p+1}\star d\star\alpha.
$$

By definition, it is easy to see that the operator $\delta$ satisfies the equations $\delta^{2}=0$ and $d\delta+\delta d=0$.
In this context,  a basic form $\alpha$ is called  (symplectic) \emph{harmonic} if it satisfies $d\alpha=\delta\alpha=0$.
Set
$$
\Omega_{\textmd{har}}(M,\mathcal{F})=\bigl\{\alpha\in\Omega(M,\mathcal{F})\,\mid\, d\alpha=\delta\alpha=0\bigr\}.
$$

There are three important operators acting on the space of basic forms:
\begin{enumerate}
  \item [(1)]$L:\Omega^{*}(M,\mathcal{F})\rightarrow\Omega^{*+2}(M,\mathcal{F}),\,\,\,\alpha\mapsto\alpha\wedge\omega$,
  \item [(2)]$\Lambda:\Omega^{*}(M,\mathcal{F})\rightarrow\Omega^{*-2}(M,\mathcal{F}),\,\,\,\alpha\mapsto\star L\star\alpha$,
  \item [(3)]$H:\Omega^{k}(M,\mathcal{F})\rightarrow\Omega^{k}(M,\mathcal{F}),\,\,\,\alpha\mapsto(n-k)\alpha$.
\end{enumerate}
In particular, we have the following result.
\begin{lem}\label{Leibniz-rule}
Let $f$ be a basic function, and $X$ a foliate vector field such that $\iota(X) \omega=df$.
Then for any basic form $\alpha$ we have
\begin{itemize}
\item [a)] $[\Lambda, \iota(X)]\alpha=0$.
\item [b)] $\delta(f\alpha)=f\delta\alpha-\iota(X)\alpha.$
\item [c)] $\delta( df\wedge \alpha)=-df\wedge \delta\alpha+\mathcal{L}(X)\alpha$.
\end{itemize}
\end{lem}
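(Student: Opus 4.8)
The plan is to reduce part a) to a pointwise identity in symplectic linear algebra, then bootstrap part b) from it, and finally extract part c) as a purely formal consequence of b).

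For a), I would first recall from \cite{L16} that, since $\ker\omega_{x}=P_{x}$ at every point, $\omega$ induces a fibrewise nondegenerate pairing on the conormal bundle of $\mathcal{F}$, hence a transverse Poisson bivector $\pi$, and that on basic forms the operator $\Lambda$ is exactly contraction by $\pi$, i.e. $\Lambda=\iota(\pi)$. Since $X$ is foliate, $\iota(X)$ preserves $\Omega(M,\mathcal{F})$, so the identity $[\Lambda,\iota(X)]\alpha=0$ takes place inside the exterior algebra of the transverse symplectic vector space $T_{x}M/P_{x}$. In a transverse Darboux chart one writes $\pi$ as a finite sum of decomposable bivectors, so $\Lambda=\sum_{i}\iota(a_{i})\iota(b_{i})$; as any two single contractions anticommute, moving $\iota(X)$ past each factor $\iota(a_{i})\iota(b_{i})$ costs $(-1)^{2}=1$, and therefore $\iota(X)$ commutes with $\Lambda$. (Note that this uses nothing about $f$: a) holds with $\iota(X)$ replaced by contraction with any transverse vector field; the hypothesis $\iota(X)\omega=df$ enters only in b) and c).)

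For b), I would use the foliated Brylinski identity $\delta=[\Lambda,d]$ on basic forms (see \cite{Br88,L16}) together with the $C^{\infty}(M,\mathcal{F})$-linearity of $\Lambda$, which is algebraic and pointwise. Using the Leibniz rule for $d$,
\[
\delta(f\alpha)=[\Lambda,d](f\alpha)=\Lambda(df\wedge\alpha)+f\,\Lambda d\alpha-df\wedge\Lambda\alpha-f\,d\Lambda\alpha=f\,\delta\alpha+\bigl[\Lambda,\,df\wedge(\cdot)\bigr]\alpha,
\]
so the matter reduces to computing the commutator of $\Lambda$ with exterior multiplication by $df$. Here I would feed a) into the symplectic $\mathfrak{sl}_{2}$-calculus: from $\iota(X)\omega=df$ and $\deg\omega=2$ one gets $[L,\iota(X)]=-\,df\wedge(\cdot)$, hence $df\wedge(\cdot)=\iota(X)L-L\iota(X)$, and then, using $[\Lambda,\iota(X)]=0$ from a) and $[\Lambda,L]=H$ (see \cite{L16}),
\[
\bigl[\Lambda,\,df\wedge(\cdot)\bigr]=\iota(X)[\Lambda,L]-[\Lambda,L]\iota(X)=\iota(X)H-H\iota(X)=-\iota(X),
\]
the last step because $\iota(X)$ lowers form degree by one, so $[H,\iota(X)]=\iota(X)$. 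Substituting yields $\delta(f\alpha)=f\,\delta\alpha-\iota(X)\alpha$. One should of course make sure that the conventions for $\star$, $\delta$ and the Hamiltonian vector field fixed in Section~2 are precisely those for which $\delta=[\Lambda,d]$; with the opposite choice the signs in b) and c) flip in tandem.

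For c), I would not touch the star operator again but deduce it from b). Writing $df\wedge\alpha=d(f\alpha)-f\,d\alpha$ and using $\delta d+d\delta=0$ together with b) applied to $\alpha$ and to $d\alpha$,
\[
\delta(df\wedge\alpha)=\delta d(f\alpha)-\delta(f\,d\alpha)=-d\bigl(f\,\delta\alpha-\iota(X)\alpha\bigr)-\bigl(-f\,d\delta\alpha-\iota(X)d\alpha\bigr)=-df\wedge\delta\alpha+d\iota(X)\alpha+\iota(X)d\alpha,
\]
and the last two terms combine into $\mathcal{L}(X)\alpha$ by Cartan's formula. The main obstacle is really a), or rather setting it up honestly in the transverse setting: one needs the identification $\Lambda=\iota(\pi)$ on basic forms together with the companion facts $[\Lambda,L]=H$ and $\delta=[\Lambda,d]$; once those are in place, b) and c) are bookkeeping, the only delicate point being the mutual consistency of the sign conventions.
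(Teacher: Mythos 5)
Your proposal is correct and follows essentially the same route as the paper: part c) is exactly the paper's own computation from b), the identity $d\delta+\delta d=0$ and Cartan's formula, while your arguments for a) and b) simply supply the content of the citations the paper relies on (\cite[Lemma 3.2]{L16} for a), and the argument of \cite[Proposition 2.5]{LS04} for b)). One caution on b): you invoke $[\Lambda,L]=H$, whereas the commutator relations recorded in Section 2 state $[L,\Lambda]=H$; taken literally together with $[\Lambda,d]=\delta$ this would flip the sign of $[\Lambda,\,df\wedge(\cdot)]$ and hence of b), so, as you yourself anticipate, the conventions for $B$, $\star$ and these commutators must be fixed consistently before the $\mathfrak{sl}_2$ bookkeeping yields the signs as stated in the lemma.
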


\begin{proof}
The assertion a) is a direct consequence of \cite[Lemma 3.2]{L16},
and b) can be proved by the same argument as the one used in \cite[Proposition 2.5]{LS04}.
It remains to check the assertion c).
Using b) and the identity $d\delta+\delta d=0$, we have
\[
\begin{split}
\delta(df\wedge \alpha)
&= \delta\left( d(f\alpha)-f d\alpha\right)\\
&=-d\delta (f\alpha)-\delta (fd\alpha)\\
&=-d\left(f\delta\alpha-\iota(X)\alpha\right)-f\delta d\alpha+\iota(X)d\alpha\\
&=-d(f\delta\alpha)-f\delta d\alpha+\left(d\iota(X)+\iota(X) d\right)\alpha\\
&= -df\wedge\delta \alpha -f\left(d\delta+\delta d\right)\alpha+\mathcal{L}(X)\alpha\\
&= -df\wedge \delta \alpha+\mathcal{L}(X)\alpha.
\end{split}
\]
This proves the assertion c).
\end{proof}

A straightforward calculation yields the following commutator relations.
\begin{prop}[cf. {\cite[Lemma 3.2]{L16}}]
\[\begin{split}&
[L,d]=0,\,\,\,[\Lambda,d]=\delta,\,\,[\Lambda,\delta]=0, \,\,[L,\delta]=-d ;\\&
[L,\Lambda]=H,\,\,[H,L]=-2L,\,\,[H,\Lambda]=2\Lambda. \qquad
\end{split}\]
\end{prop}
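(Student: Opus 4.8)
The plan is to reduce all seven identities to purely local statements, and all but one of them to purely algebraic statements about a single symplectic vector space. The operators $L$, $\Lambda$, $H$ and $\star$ are $C^{\infty}(M,\mathcal{F})$-linear, being built fibrewise out of $\omega$, and $d$ --- hence also $\delta=(-1)^{p+1}\star d\star$ --- is a local operator, so each relation may be verified in a single foliated chart. Since $\iota(X)\omega=0$ and $\mathcal{L}(X)\omega=d\iota(X)\omega+\iota(X)d\omega=0$ for every $X\in\Xi(\mathcal{F})$, the form $\omega$ is itself basic; hence in an adapted chart $U\cong U_{1}\times U_{2}$ with $U_{2}\subset\mathbb{R}^{2n}$ a transversal, the basic forms on $U$ are exactly the pullbacks of forms on $U_{2}$, and $\omega$ is the pullback of a closed non-degenerate $2$-form $\bar\omega$ on $U_{2}$. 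Under this identification $d$, $\star$, $L$, $\Lambda$, $H$ and $\delta$ become the usual symplectic Hodge operators of $(U_{2},\bar\omega)$, so the proposition reduces to ordinary symplectic Hodge theory read on basic forms, for which I would invoke \cite{Br88,Y96}. For completeness I would still record the proofs, as follows.

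The three relations $[L,\Lambda]=H$, $[H,L]=-2L$ and $[H,\Lambda]=2\Lambda$ are pointwise: over $x\in M$ they concern the operators $L$ (exterior multiplication by $\omega$), $\Lambda=\star L\star$ and $H$ (multiplication by $n-k$ on forms of transverse degree $k$) acting on the exterior algebra $\bigwedge(T_{x}M/P_{x})^{*}$, where $T_{x}M/P_{x}$ carries the linear symplectic form induced by $\omega$. There $[H,L]=-2L$ and $[H,\Lambda]=2\Lambda$ are immediate bidegree counts, while $[L,\Lambda]=H$ is the symplectic Lefschetz identity, proved by a one-line computation in a Darboux basis $\{p_{i},q_{i}\}$, or equivalently by recognising $(L,\Lambda,H)$ as an $\mathfrak{sl}(2)$-triple. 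Since these operators act fibrewise, the identities descend verbatim to $\Omega(M,\mathcal{F})$. Likewise $[L,d]=0$ is immediate from $d\omega=0$: for a basic $p$-form $\alpha$ one has $dL\alpha=d(\alpha\wedge\omega)=d\alpha\wedge\omega+(-1)^{p}\alpha\wedge d\omega=Ld\alpha$.

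The one genuinely substantial identity is $[\Lambda,d]=\delta$, the symplectic counterpart of the Kähler identity. I would prove it either through the localization above, invoking Brylinski's theorem on $(U_{2},\bar\omega)$, or intrinsically: starting from the fibrewise primitive decomposition $\alpha=\sum_{j\geq 0}L^{j}\beta_{j}$ with $\Lambda\beta_{j}=0$ --- a formal consequence of the $\mathfrak{sl}(2)$-relations just established --- together with the explicit Weil formula for $\star$ on each $L^{j}\beta_{j}$, one computes $\delta\alpha=(-1)^{p+1}\star d\star\alpha$ term by term using $[L,d]=0$ and the primitive decomposition of $d\beta_{j}$, and checks that the result equals $(\Lambda d-d\Lambda)\alpha$.

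Finally, the remaining two relations are formal consequences of those already in hand. For $[\Lambda,\delta]=0$, a direct manipulation using only $\Lambda=\star L\star$, $\delta=(-1)^{p+1}\star d\star$ and $\star^{2}=\mathrm{id}$ gives $\Lambda\delta\alpha=(-1)^{p+1}\star Ld\star\alpha$ and $\delta\Lambda\alpha=(-1)^{p+1}\star dL\star\alpha$ on a $p$-form $\alpha$, whence $[\Lambda,\delta]\alpha=(-1)^{p+1}\star[L,d]\star\alpha=0$; and $[L,\delta]=-d$ follows from the relations above via $[L,\delta]=[[L,\Lambda],d]=[H,d]=-d$ (using $[\Lambda,d]=\delta$, $[L,d]=0$, $[L,\Lambda]=H$, and the degree count $[H,d]=-d$). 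The step that requires real care, as opposed to bookkeeping, is the localization in the first paragraph: one must check that in an adapted chart the basic complex is literally the transverse de Rham complex and that $\star$, $L$, $\Lambda$, $H$ and $\delta$ coincide with their transverse symplectic analogues --- after which the proposition is nothing but classical symplectic Hodge theory.
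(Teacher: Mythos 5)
Your proposal is correct, and no step in it would fail. For comparison: the paper does not actually prove this proposition --- it is dismissed as ``a straightforward calculation'' with a pointer to \cite[Lemma 3.2]{L16}, where the identities are verified by direct computation on basic forms, i.e.\ the Brylinski--Yan calculation \cite{Br88,Y96} repeated verbatim in the basic complex. Your route is different in organization and arguably cleaner: you first note that $\omega$ is basic, localize to a foliated chart where the basic complex is literally the pullback of the de Rham complex of a local transversal $(U_2,\bar\omega)$ (legitimate, since all seven operators are local and commute with restriction, and the identities are pointwise equalities of forms), so that the $\mathfrak{sl}(2)$-relations, $[L,d]=0$ and the one substantive identity $[\Lambda,d]=\delta$ become classical symplectic Hodge theory; you then obtain $[\Lambda,\delta]=0$ from $\star^2=\mathrm{id}$ together with $[L,d]=0$, and $[L,\delta]=-d$ from the Jacobi identity $[L,[\Lambda,d]]=[[L,\Lambda],d]+[\Lambda,[L,d]]=[H,d]=-d$. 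What your approach buys is that only $[\Lambda,d]=\delta$ ever requires genuine work, and even that is outsourced to the linear-algebra/primitive-decomposition argument on a symplectic vector space; what the direct computation of the reference buys is independence from the local triviality discussion. Two small points worth recording if you write this up, neither a gap: restriction of a basic form on $M$ to a chart is basic for the restricted foliation (the defining conditions are local), and in your intrinsic proof of $[\Lambda,d]=\delta$ the primitive components $\beta_j$ are again basic, since they are universal polynomials in $L,\Lambda,H$ applied to $\alpha$.
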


\begin{defn}
Let $(M,\mathcal{F},\omega)$ be a transversely symplectic foliation of co-dimension $2n$.
We say that $M$ satisfies the \emph{transverse hard Lefschetz property}, if for any $0\leq k\leq n$, the map
$$
L^{k}:H^{n-k}(M,\mathcal{F})\rightarrow H^{n+k}(M,\mathcal{F})
$$
is an isomorphism.
\end{defn}

On compact symplectic manifolds, Brylinski \cite{Br88} conjectured that every de Rham cohomology class has a symplectic harmonic representative.
However, Mathieu \cite{Ma95} proved that this conjecture is true if and only if the manifold satisfies the hard Lefschetz property.
Mathieu's theorem was sharpened by Merkulov \cite{Mer98} and Guillemin \cite{Gui01}, who independently established the symplectic $d\delta$-lemma. The symplectic $d\delta$-lemma was first extended to transversely symplectic flows by Zhenqi He \cite{H10},
and more recently, by the first author \cite{L16} to arbitrary transversely symplectic foliations.
The following results are reformulations of \cite[Theorem 4.1, 4.8]{L16}.
\begin{thm}\label{thm2.15}
Let $(M,\mathcal{F},\omega)$ be a transversely symplectic foliation with the transverse hard Lefschetz property.
Then every basic cohomology class has a symplectic harmonic representative.
\end{thm}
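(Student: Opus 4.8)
The plan is to prove this as the transverse analogue of the theorem of Mathieu \cite{Ma95} and Yan \cite{Y96} (later refined by Merkulov \cite{Mer98} and Guillemin \cite{Gui01}): on a compact symplectic manifold the hard Lefschetz property forces every de Rham class to have a symplectic harmonic representative. The organizing structure is the $\mathfrak{sl}_{2}$-triple $(L,\Lambda,H)$ on the basic de Rham complex whose commutators are recorded above. Since a basic $k$-form is determined by its restriction to the rank-$2n$ transverse bundle $TM/T\mathcal{F}$, each fiber of the exterior algebra of this bundle is a finite-dimensional $\mathfrak{sl}_{2}$-module, and the fiberwise Lefschetz decomposition globalizes to
$$
\Omega^{k}(M,\mathcal{F})=\bigoplus_{r\geq 0}L^{r}P^{k-2r}(M,\mathcal{F}),\qquad P^{j}(M,\mathcal{F}):=\bigl\{\beta\in\Omega^{j}(M,\mathcal{F})\mid\Lambda\beta=0\bigr\},
$$
with $P^{j}(M,\mathcal{F})=0$ for $j>n$; see \cite[Lemma 3.2]{L16}. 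By a standard linear-algebra argument, the transverse hard Lefschetz property then induces a Lefschetz decomposition $H^{k}(M,\mathcal{F})=\bigoplus_{r\geq0}L^{r}\,PH^{k-2r}(M,\mathcal{F})$ of basic cohomology, where $PH^{j}(M,\mathcal{F})=\ker\bigl(L^{n-j+1}\colon H^{j}(M,\mathcal{F})\to H^{2n-j+2}(M,\mathcal{F})\bigr)$; in particular every basic cohomology class is an $\Real$-linear combination of classes of the form $L^{r}[\beta]$ with $[\beta]$ primitive of degree $\leq n$.

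The first step is to reduce to primitive classes. Because $[L,d]=0$, the operator $L$ descends to $H(M,\mathcal{F})$; and because $[L,\delta]=-d$, a symplectic harmonic basic form $\alpha$ has $d(L\alpha)=L(d\alpha)=0$ and $\delta(L\alpha)=L(\delta\alpha)+d\alpha=0$, so $L$ carries symplectic harmonic basic forms to symplectic harmonic basic forms. Hence the set of basic cohomology classes admitting a symplectic harmonic representative is a linear subspace of $H(M,\mathcal{F})$ that is stable under $L$; by the Lefschetz decomposition of $H(M,\mathcal{F})$ above, such a subspace is all of $H(M,\mathcal{F})$ once it contains every primitive class. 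So it suffices to produce a symplectic harmonic representative for an arbitrary class in $PH^{s}(M,\mathcal{F})$ with $s\leq n$.

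For the primitive case I would use the pointwise identity on a $2n$-dimensional symplectic vector space — valid then fiberwise on basic forms — that a primitive $s$-form $\beta$ satisfies $\star\beta=\tfrac{(-1)^{s(s+1)/2}}{(n-s)!}\,L^{n-s}\beta$ (cf.\ \cite{Br88,Y96}). From it one reads off that a \emph{closed primitive} basic form $\beta$ of degree $s\leq n$ is automatically symplectic harmonic: since $L$ commutes with $d$ and $d\beta=0$,
$$
\delta\beta=(-1)^{s+1}\star d\star\beta=\frac{(-1)^{s+1}(-1)^{s(s+1)/2}}{(n-s)!}\,\star\bigl(L^{n-s}d\beta\bigr)=0.
$$
Therefore the theorem is reduced to the assertion that every class in $PH^{s}(M,\mathcal{F})$ has a closed representative lying in $P^{s}(M,\mathcal{F})$.

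This last assertion is the technical heart of the proof, and I expect it to be the main obstacle. Here the plan is to follow Yan's scheme \cite{Y96} (compare Mathieu \cite{Ma95}): starting from a closed representative $\alpha$ of the given primitive class, expand it in the form-level Lefschetz decomposition as $\alpha=\alpha_{0}+L\alpha_{1}+L^{2}\alpha_{2}+\cdots$ with each $\alpha_{r}\in P^{s-2r}(M,\mathcal{F})$; rewrite the equation $d\alpha=0$ in terms of the two primitive components $d_{\pm}$ of $d$ relative to the decomposition (so that $d\beta=d_{+}\beta+L\,d_{-}\beta$ with $d_{\pm}\beta$ primitive whenever $\beta$ is — this uses $[\Lambda,d]=\delta$ and $[\Lambda,\delta]=0$ to see $\Lambda^{2}d\beta=0$); and then use the transverse hard Lefschetz hypothesis — equivalently, the vanishing in basic cohomology of $L^{n-s+1}[\alpha]$ — to remove the components $\alpha_{1},\alpha_{2},\dots$ one after another by subtracting suitable exact basic forms, arriving at a closed representative lying entirely in $P^{s}(M,\mathcal{F})$. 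All the global analytic input has already been distilled into the transverse hard Lefschetz property and the commutator relations of \cite[Lemma 3.2]{L16} quoted above, so this step is purely homological-algebraic; the one point needing (routine) verification is that the Lefschetz decomposition and the operators $d_{\pm}$ respect basic forms, which is automatic because $L,\Lambda,H,d,\delta$ all preserve $\Omega^{*}(M,\mathcal{F})$, so the symplectic-manifold arguments transfer verbatim. As a variant, one may first establish the transverse symplectic $d\delta$-lemma $\operatorname{im}d\cap\ker\delta=\operatorname{im}\delta\cap\ker d=\operatorname{im}d\delta$ by the Merkulov--Guillemin method \cite{Mer98,Gui01} and deduce the statement from it: if $\alpha$ is closed then $d\delta\alpha=-\delta d\alpha=0$, so $\delta\alpha\in\ker d\cap\operatorname{im}\delta=\operatorname{im}d\delta$; writing $\delta\alpha=-d\delta\mu$, the form $\alpha-d\mu$ is closed, cohomologous to $\alpha$, and satisfies $\delta(\alpha-d\mu)=\delta\alpha+d\delta\mu=0$, hence is symplectic harmonic.
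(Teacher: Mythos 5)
Your plan is sound, but note that the paper itself contains no proof of Theorem \ref{thm2.15}: it is stated as a reformulation of \cite[Theorem 4.1]{L16}, and what you propose is essentially a reconstruction of that reference's Mathieu--Yan argument transplanted to basic forms, so in approach you agree with the source the paper relies on. One substantive comment: the step you single out as the technical heart --- producing a closed representative in $P^{s}(M,\mathcal{F})$ for a cohomologically primitive class $[\alpha]\in H^{s}(M,\mathcal{F})$, $s\leq n$ --- is true and needs none of the $d_{\pm}$ bookkeeping. Since $L^{n-s+1}[\alpha]=0$, write $\omega^{n-s+1}\wedge\alpha=d\gamma$ with $\gamma\in\Omega^{2n-s+1}(M,\mathcal{F})$; the map $L^{n-s+1}\colon\Omega^{s-1}(M,\mathcal{F})\to\Omega^{2n-s+1}(M,\mathcal{F})$ is bijective (it is a fiberwise isomorphism on $\Lambda^{\bullet}(TM/T\mathcal{F})^{*}$, and its inverse preserves basicness because $\iota(X)\omega=\mathcal{L}(X)\omega=0$ for all $X\in\Xi(\mathcal{F})$ and $L^{n-s+1}$ is pointwise injective), so $\gamma=L^{n-s+1}\beta$ with $\beta$ basic; then $\alpha-d\beta$ is a closed representative of $[\alpha]$ killed by $L^{n-s+1}$ at the form level, hence lies in $P^{s}(M,\mathcal{F})=\ker(L^{n-s+1})\cap\Omega^{s}(M,\mathcal{F})$ for $s\leq n$, and your Weil identity gives $\delta(\alpha-d\beta)=0$ (the case $s=0$ is trivial). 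Finally, your variant through the $d\delta$-lemma is legitimate only if that lemma is proved independently of the present theorem: within this paper Theorem \ref{symplectic-d-delta} is likewise imported from \cite{L16}, so quoting it here would just move the citation around, whereas the Merkulov--Guillemin route you mention is indeed how the foliated $d\delta$-lemma is established in \cite{L16}.
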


\begin{thm}\label{symplectic-d-delta}
Assume that $(M,\mathcal{F},\omega)$ is a transversely symplectic foliation that satisfies the transverse hard Lefschetz property.
Then on the space of basic forms we have
$$
\emph{im}\,d\cap\ker\,\delta=\ker\,d\cap \emph{im}\,d=\emph{im}\,d\delta.
$$
\end{thm}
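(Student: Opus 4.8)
The stated identity is the symplectic analogue of the $\partial\bar\partial$-lemma (the middle term being read, as the chain of equalities demands, as $\ker d\cap\operatorname{im}\delta$), and my plan is to carry the proofs of Merkulov \cite{Mer98} and Guillemin \cite{Gui01} over to the foliated setting: the ingredients they use are all available to us, namely the commutation relations of the preceding Proposition, the fact that the symplectic star $\star$ is an involution of $\Omega(M,\mathcal{F})$ intertwining $d$ with $\pm\delta$ (immediate from the definition of $\delta$ and $\star^{2}=\operatorname{id}$), and Theorem \ref{thm2.15}, the foliated Mathieu theorem. As the basic cohomology of a transversely symplectic foliation may be infinite dimensional, the argument must stay algebraic inside the $\mathfrak{sl}(2)$-module $\bigl(\Omega(M,\mathcal{F}),L,\Lambda,H\bigr)$; Theorem \ref{thm2.15} is the sole place where the transverse hard Lefschetz hypothesis enters.

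\smallskip\noindent\emph{Step 1 (trivial inclusions; a symmetry).} From $d^{2}=\delta^{2}=0$ and $d\delta=-\delta d$ one reads off $\operatorname{im}d\delta=\operatorname{im}\delta d\subseteq(\operatorname{im}d\cap\ker\delta)\cap(\ker d\cap\operatorname{im}\delta)$; moreover a form in $\operatorname{im}d$ is automatically $d$-closed and one in $\operatorname{im}\delta$ automatically $\delta$-closed, so that $\operatorname{im}d\cap\ker\delta=\operatorname{im}d\cap\Omega_{\textmd{har}}(M,\mathcal{F})$ and $\ker d\cap\operatorname{im}\delta=\Omega_{\textmd{har}}(M,\mathcal{F})\cap\operatorname{im}\delta$. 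Since $\star$ commutes with $d\delta$ and exchanges $\operatorname{im}d$ with $\operatorname{im}\delta$ while fixing $\Omega_{\textmd{har}}(M,\mathcal{F})$, it suffices to prove the single inclusion
\[
\Omega_{\textmd{har}}(M,\mathcal{F})\cap\operatorname{im}d\ \subseteq\ \operatorname{im}d\delta .
\]

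\smallskip\noindent\emph{Step 2 (the algebra of $L$, $\Lambda$, $\delta$ on primitive forms).} The commutation relations give $[L,d]=0$ and $[L,d\delta]=[L,d]\,\delta+d\,[L,\delta]=-d^{2}=0$, so $L$ preserves $\operatorname{im}d\delta$. For a primitive basic form $\gamma$ of degree $s$, write $d\gamma=a+Lb$ for the Lefschetz decomposition of $d\gamma$ (with $a$, $b$ primitive of degrees $s+1$, $s-1$); then $[\Lambda,d]=\delta$ and $[L,\Lambda]=H$ give $\delta\gamma=(s-n-1)\,b$, a nonzero multiple of the $L^{1}$-component of $d\gamma$. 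In particular a primitive basic form is symplectic harmonic exactly when it is $d$-closed, and the action of $d$ and of $\delta$ on any term $L^{r}(\text{primitive})$ is determined explicitly by the same relations.

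\smallskip\noindent\emph{Step 3 (induction; the main obstacle).} I would finish by induction on the degree. Given $\beta\in\Omega_{\textmd{har}}(M,\mathcal{F})\cap\operatorname{im}d$, take its Lefschetz decomposition $\beta=\sum_{r}L^{r}\beta_{r}$; expanding $d\beta=0$ and $\delta\beta=0$ into Lefschetz components via Step 2, one finds that every $\beta_{r}$ is $d$-closed except possibly for one coupled pair $(\beta_{j^{*}},\beta_{j^{*}+1})$ of primitive degrees $n$ and $n-2$, which arises only when $\deg\beta\ge n$ and $\deg\beta\equiv n\pmod{2}$ and is linked through the closed exact primitive degree-$(n-1)$ form $\phi:=d\beta_{j^{*}+1}$ (so that $d\beta_{j^{*}}=-L\phi$). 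Each $d$-closed primitive component $\beta_{r}$ satisfies $L^{\,n-s_{r}+1}\beta_{r}=0$ already at the level of forms, hence represents a primitive basic cohomology class; the primitive decomposition of $H(M,\mathcal{F})$ furnished by the transverse hard Lefschetz property, together with the injectivity of the pertinent Lefschetz maps and the inductive hypothesis applied to the lower primitive degrees (the top primitive component being first transported by $\star$ to its complementary degree), shows that the $d$-closed part of $\beta$ lies in $\operatorname{im}d\delta$. There remains the harmonic, exact, coupled block $L^{j^{*}}\beta_{j^{*}}+L^{j^{*}+1}\beta_{j^{*}+1}$; disposing of it is the crux of the argument and the one point at which the transverse hard Lefschetz hypothesis is indispensable: one applies Theorem \ref{thm2.15} to the primitive form $\phi$ and then, using the explicit formulas of Step 2, builds a basic form $\eta$ realizing the block as $d\delta\eta$. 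Conceptually, the transverse hard Lefschetz property makes $\bigl(\Omega(M,\mathcal{F}),d,\delta\bigr)$, after regrading by the Lefschetz weight, behave like a bounded double complex whose two spectral sequences degenerate at $E_{1}$, and the desired identity is then exactly the $\partial\bar\partial$-lemma for that complex.
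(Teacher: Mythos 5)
You should first note that the paper itself contains no proof of Theorem \ref{symplectic-d-delta}: it is stated as a reformulation of \cite[Theorems 4.1 and 4.8]{L16}, so the proof you are implicitly competing with is the one in that companion paper, which (like your plan) adapts the Merkulov--Guillemin arguments to basic forms. Your Step 1 (reduction, via $\star$, to showing $\Omega_{\mathrm{har}}(M,\mathcal{F})\cap\mathrm{im}\,d\subseteq\mathrm{im}\,d\delta$, with the middle term of the statement read as $\ker d\cap\mathrm{im}\,\delta$) and Step 2 (the $\mathfrak{sl}(2)$ computations, e.g.\ $\delta\gamma=(s-1-n)b$ for primitive $\gamma$ with $d\gamma=a+Lb$) are correct and standard. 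The problems are in Step 3, which is where all the content of the lemma lives.

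First, the structural claim there is not right. Expanding $d\beta=0$ and $\delta\beta=0$ in Lefschetz components, the constraints on the primitive pieces $\beta_r$ of a harmonic $\beta$ of degree $k\geq n+2$ are partly vacuous, because the relevant components of $d\beta$ and $\delta\beta$ are killed by high powers of $L$ before any equation is imposed: for $k=n+2$, for instance, $d\beta_{r}=Lb_{r}$ can be nonzero with $b_r$ completely unconstrained (and for larger $k$ several components fail to be $d$-closed), so there is no single ``coupled pair'' of primitive degrees $n$ and $n-2$ and $\phi:=d\beta_{j^*+1}$ need not be primitive. The clean fact one actually wants follows directly from the commutation relations: since $[\Lambda,d]=\delta$, $[\Lambda,\delta]=0$, $[L,d]=0$, $[L,\delta]=-d$, the space $\Omega_{\mathrm{har}}(M,\mathcal{F})$ is an $\mathfrak{sl}(2)$-submodule, so each component $L^r\beta_r$ of a harmonic form is itself harmonic; no exceptional block analysis is needed. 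Second, and more seriously, the decisive implication --- a harmonic, $d$-exact (basic) form is $d\delta$-exact --- is never actually proved. The ``induction on the degree'' has no stated inductive hypothesis or mechanism (it is not explained what lower-degree statement is invoked, or why the reduction terminates), and the proposed use of Theorem \ref{thm2.15} is empty at this point: $\phi$ is exact, so Theorem \ref{thm2.15} only says that its (zero) class has a harmonic representative; it does not produce the potential $\eta$ with $d\delta\eta$ equal to the given block. Mathieu-type existence of harmonic representatives is exactly the statement that is \emph{weaker} than the $d\delta$-lemma, and the hard work in \cite{Mer98}, \cite{Gui01} and \cite[Theorems 4.1, 4.8]{L16} is precisely the construction you defer --- an argument on the primitive complexes using the (transverse) hard Lefschetz isomorphisms at the cohomology level. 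The closing heuristic about a bounded double complex with degenerating spectral sequences cannot substitute for it either, since $(\Omega(M,\mathcal{F}),d,\delta)$ is not bigraded. As it stands, your proposal establishes the easy inclusions and the $\star$-symmetry reduction, but not the theorem.
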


Let $\Omega_{\delta}(M,\mathcal{F})=\text{ker}\,\delta \cap \Omega(M,\mathcal{F})$.
Since $d$ anti-commutes with $\delta$, the subspace $\Omega_{\delta}(M,\mathcal{F})$
forms a sub-complex of the basic de Rham complex $\{\Omega(M,\mathcal{F}),d\}$, the cohomology of which we denote by $H_{\delta}(M,\mathcal{F})$.
The following result is a direct consequence of Theorem \ref{symplectic-d-delta}.
Here $H(\Omega(M,\mathcal{F}),\delta)$ denotes the homology of $\Omega(M,\mathcal{F})$ with respect to $\delta$.

\begin{thm} \label{formality-basic-complex}
Assume that $(M,\mathcal{F},\omega)$ is a transversely symplectic foliation that satisfies the transverse hard Lefschetz property.
Then the $d$-chain maps in the diagram
\[
\Omega(M,\mathcal{F}) \longleftarrow \Omega_{\delta}(M,\mathcal{F})\longrightarrow H(\Omega(M,\mathcal{F}), \delta)
\]
are quasi-isomorphisms that induce isomorphisms in cohomology.
\end{thm}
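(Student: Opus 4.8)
The plan is to deduce the statement from the two structural results just recorded — the existence of symplectic harmonic representatives (Theorem \ref{thm2.15}) and the symplectic $d\delta$-lemma (Theorem \ref{symplectic-d-delta}) — by the standard argument showing that a complex carrying two anti-commuting differentials which satisfy a $\partial\bar\partial$-type lemma is formal. Throughout, all forms are basic and we abbreviate $\Omega_\delta=\Omega_\delta(M,\mathcal{F})=\ker\delta$. Since $d\delta=-\delta d$, the operator $d$ carries $\ker\delta$ into itself and $\operatorname{im}\delta$ into itself, hence descends to a differential $\bar d$ on $H(\Omega(M,\mathcal{F}),\delta)=\ker\delta/\operatorname{im}\delta$; the inclusion $\iota\colon(\Omega_\delta,d)\hookrightarrow(\Omega(M,\mathcal{F}),d)$ and the quotient map $\pi\colon(\Omega_\delta,d)\to(H(\Omega(M,\mathcal{F}),\delta),\bar d)$ are then the two $d$-chain maps in the diagram, and it suffices to show that each induces an isomorphism in cohomology. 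I would first note that $\bar d=0$: for $\alpha\in\ker\delta$ we have $d\alpha\in\operatorname{im} d\cap\ker\delta=\operatorname{im} d\delta\subseteq\operatorname{im}\delta$ by Theorem \ref{symplectic-d-delta}, so $[d\alpha]_\delta=0$; consequently the cohomology of $(H(\Omega(M,\mathcal{F}),\delta),\bar d)$ is just $H(\Omega(M,\mathcal{F}),\delta)$ itself.

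I would then handle the two maps in turn. For $\iota$: surjectivity in cohomology is immediate from Theorem \ref{thm2.15}, since a $d$-closed basic form is $d$-cohomologous to a symplectic harmonic one, which lies in $\Omega_\delta$; and for injectivity, if $\alpha\in\Omega_\delta$ is $d$-closed and $\alpha=d\beta$ for some basic $\beta$, then $\alpha\in\operatorname{im} d\cap\ker\delta=\operatorname{im} d\delta$, say $\alpha=d\delta\mu$, whence $\delta\mu\in\Omega_\delta$ and $d(\delta\mu)=\alpha$, so $\alpha$ is already a $d$-boundary inside $\Omega_\delta$. For $\pi$, which after the reduction above is the map $[\alpha]_d\mapsto[\alpha]_\delta$ from $H(\Omega_\delta,d)$ to $H(\Omega(M,\mathcal{F}),\delta)$: given $\gamma\in\ker\delta$, the form $d\gamma$ lies in $\operatorname{im} d\cap\ker\delta=\operatorname{im} d\delta$, say $d\gamma=d\delta\nu$, so $\alpha:=\gamma-\delta\nu$ lies in $\Omega_\delta$, is $d$-closed, and satisfies $[\alpha]_\delta=[\gamma]_\delta$, which gives surjectivity; conversely, if $\alpha\in\Omega_\delta$ is $d$-closed with $[\alpha]_\delta=0$, i.e. $\alpha=\delta\mu$, then $\alpha\in\ker d\cap\operatorname{im}\delta=\operatorname{im} d\delta$, say $\alpha=d\delta\lambda$, so $\delta\lambda\in\Omega_\delta$ and $d(\delta\lambda)=\alpha$, giving $[\alpha]_d=0$ and hence injectivity.

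The one point that warrants care is the appeal to both symmetric forms of the symplectic $d\delta$-lemma, namely $\operatorname{im} d\cap\ker\delta=\operatorname{im} d\delta$ and $\ker d\cap\operatorname{im}\delta=\operatorname{im} d\delta$. The first is Theorem \ref{symplectic-d-delta}; the second follows from it by conjugating with the symplectic star operator $\star$, which by \eqref{star-adjoint-identity} is an involution, intertwines $d$ with $\pm\delta$, and therefore sends $\ker d$ to $\ker\delta$, $\operatorname{im}\delta$ to $\operatorname{im} d$, and $\operatorname{im} d\delta$ onto itself. Beyond this there is no genuine obstacle: all the substantive content sits in Theorems \ref{thm2.15} and \ref{symplectic-d-delta}, and what remains is exactly the routine diagram chase above. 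Since both $\iota$ and $\pi$ induce isomorphisms in cohomology, the theorem follows.
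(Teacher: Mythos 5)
Your proof is correct and is exactly the standard $d\delta$-lemma-to-formality diagram chase that the paper has in mind: it states Theorem \ref{formality-basic-complex} as a direct consequence of Theorem \ref{symplectic-d-delta} without spelling out the details, and your argument supplies precisely those details. Your care in establishing the second identity $\ker d\cap\operatorname{im}\delta=\operatorname{im}\,d\delta$ via conjugation by $\star$ is well placed, since the displayed form of Theorem \ref{symplectic-d-delta} contains a typo ($\ker d\cap\operatorname{im}\,d$ should read $\ker d\cap\operatorname{im}\,\delta$), and with that identity in hand the rest of your chase (including $\bar d=0$ and the use of Theorem \ref{thm2.15} for surjectivity of the inclusion) goes through without gaps.
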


%=======================================================================================
\section{Equivariant formality and basic $d_{G}\delta$-lemma}\label{eq-formality-ddelta-lemma}

In this section we study the equivariant basic cohomology of Hamiltonian actions on transversely symplectic foliations using the Hodge theoretic approach.
Let $\mathfrak{g}$ be a finite-dimensional Lie algebra. Recall that a \emph{transverse action} of $\mathfrak{g}$ on a foliated manifold $(M,\mathcal{F})$ is defined to be a Lie algebra homomorphism $\mathfrak{g}\rightarrow l(M,\mathcal{F})$ (cf. \cite[Definition 2.1]{GT10}).
We propose the following definition of transverse actions of a Lie group $G$.
\begin{defn}\label{trans-G-action}
Consider the action of a Lie group $G$ with the Lie algebra $\mathfrak{g}$ on a foliated manifold $(M,\mathcal{F})$.
We say that the action of $G$ is \emph{transverse}, if the image of the associated infinitesimal action map $\mathfrak{g}\rightarrow \Xi(M)$ lies in $L(M,\mathcal{F})$.
\end{defn}

\begin{rem}
Suppose that there is a transverse action of a Lie group $G$  with Lie algebra $\mathfrak{g}$ on a foliated manifold $(M,\mathcal{F})$.
Then by definition we have the following commutative diagram of Lie algebra homomorphisms.
$$
\xymatrix{
  \mathfrak{g} \ar[dr]_{} \ar[r]^{}
                & L(M,\mathcal{F}) \ar[d]^{\textmd{pr}}  \\
                & l(M,\mathcal{F})             }
$$
Here the vertical map is the natural projection.
Therefore we also have a transverse $\mathfrak{g}$-action on $(M,\mathcal{F})$ in the sense of \cite[Definition 2.1]{GT10}.
\end{rem}

\begin{lem}\label{preserve-basic}
Consider the transverse action of a compact connected Lie group $G$ on a foliated manifold $(M,\mathcal{F})$.
If $\alpha$ is a basic form, and if $X_M$ is a fundamental vector field induced by an element $X\in \mathfrak{g}$,
then $\iota(X_M)\alpha$ and $\mathcal{L}(X_M)\alpha$ are also basic forms.
\end{lem}
\begin{proof}
Let $Y\in \Xi(\mathcal{F})$.
Since the action of $G$ is transverse, we get $[Y,X_M]\in \Xi(\mathcal{F})$.
It follows that
$$
\iota(Y)\iota(X_M)\alpha=-\iota(X_M)\iota(Y)\alpha=0,
$$
and that
$$
\mathcal{L}(Y)\iota(X_M)\alpha=\iota([Y,X_M])\alpha+\iota(X_M)\mathcal{L}(Y)\alpha=0.
$$
This proves that $\iota(X_M)\alpha$ is a basic form.
A similar calculation shows that $\mathcal{L}(X_M)\alpha$ is also basic.
\end{proof}

Suppose that there is a transverse action of a compact connected Lie group $G$ on a foliated manifold $(M,\mathcal{F})$.
As an immediate consequence of Lemma \ref{preserve-basic}, we see that $\Omega(M,\mathcal{F})$ is a $G^{\star}$-module in the sense of \cite[Definition 2.3.1]{GS99}.
Therefore, there is a well defined Cartan model of $\Omega(M,\mathcal{F})$  given by
\[
\Omega_{G}(M,\mathcal{F}):=[S(\mathfrak{g^{*}})\otimes\Omega(M,\mathcal{F})]^{G},
\]
which we call the \emph{equivariant basic Cartan complex}.

To simplify the notations, let us write  $\Omega_{\textmd{bas}}=\Omega(M,\mathcal{F})$,
and $\Omega_{G,\textmd{bas}}=\Omega_{G}(M,\mathcal{F})$.
Elements of $\Omega_{G,\textmd{bas}}$ can be regarded as equivariant polynomial maps from $\mathfrak{g}$ to $\Omega_{\textmd{bas}}$,
and are called \emph{equivairant basic differential forms} on $M$.
The equivariant basic Cartan model $\Omega_{G,\textmd{bas}}$ has a bi-grading given by
$$
\Omega^{i,j}_{G,\textmd{bas}}=[S^{i}(\mathfrak{g^{*}})\otimes\Omega^{j-i}_{\textmd{bas}}]^{G};
$$ moreover, it is quipped with the vertical differential $1\otimes d$, which we abbreviate to $d$,
and the horizontal differential $\partial$, which is defined by
\[\partial(\alpha(\xi))=-\iota(\xi)\alpha(\xi),\,\,\,\textmd{for all}\,\xi\in\frak{g}.\]
Here $\iota(\xi)$ denotes the inner product with the fundamental vector field on $M$ induced by $\xi\in \mathfrak{g}$.
As a single complex, $\Omega_{G,\textmd{bas}}$ has a grading given by
$$
\Omega_{G,\textmd{bas}}^k=\displaystyle \bigoplus_{i+j=k}\Omega_{G,\textmd{bas}}^{i,j},
$$
and a total differential
$d_G=d+\partial$, which is called the equivariant exterior differential.
We say that an equivariant differential basic form $\alpha$ is equivariantly closed,
resp., equivariantly exact, if $d_G\alpha=0$, resp. $\alpha=d_G\beta$ for some equivariant basic form $\beta$.

\begin{defn}
The \emph{equivariant basic cohomology} of the transverse $G$-action on $(M,\mathcal{F})$ is defined to be the total cohomology of the equivariant basic Cartan complex $\{\Omega_{G}(M,\mathcal{F}),d_{G}\}$, which is denoted
by $H_{G}(M,\mathcal{F})$.
\end{defn}

We would like to point out that the above definition of equivariant basic cohomology was first introduced by Goertsches-T\"{o}ben in \cite{GT10} using the language of equivariant cohomology of $\mathfrak{g}^{\star}$-algebras.
Following Goresky-Kottwitz-MacPherson \cite{GKM98}, we propose the following definition of equivariant formality for transverse $G$-actions.
\begin{defn}
A transverse $G$-action on $(M,\mathcal{F})$ is \emph{equivariantly formal} if
$$
H_{G}(M,\mathcal{F})\cong S(\mathfrak{g}^*)^G\otimes H(M,\mathcal{F})
$$
as graded $S(\mathfrak{g}^{*})^{G}$-modules.
\end{defn}

Next, we review the notion of Hamiltonian $G$-actions on transversely symplectic foliations.

\begin{defn}(\cite{LS16})\label{Hamiltonian}
Consider the action of a compact connected Lie group $G$ with the Lie algebra $\mathfrak{g}$ on a transversely symplectic foliation $(M,\mathcal{F},\omega)$.
We say that the $G$-action on $(M,\mathcal{F},\omega)$ is \emph{Hamiltonian},
if the $G$-action preserves the transversely symplectic form $\omega$,
and if there exists an equivariant map,
$$
 \Phi:M\rightarrow\mathfrak{g}^{*},
$$
called a moment map, such that $d\langle\Phi,\xi\rangle=\iota(\xi)\omega$, for each $\xi\in\mathfrak{g}$.
Here $\langle\cdot,\cdot\rangle$ denotes the dual pairing between $\mathfrak{g}$ and $\mathfrak{g}^{*}$.
\end{defn}
\begin{rem}
By definition, the Hamiltonian action of a Lie group $G$ on a transversely symplectic manifold $(M,\mathcal{F},\omega)$ is always transverse. Indeed, since the action preserves the transversely symplectic form $\omega$, it also preserves its null foliation $\mathcal{F}$.
It then follows from \cite[Proposition 2.2]{Mo88} that the $G$-action must be transverse.
\end{rem}
From now on, we assume that $(M,\mathcal{F},\omega)$ is a compact transversely symplectic foliation that satisfies the transverse hard Lefschetz property,
and that there is a compact connected Lie group $G$ acting on $(M,\mathcal{F},\omega)$ in a Hamiltonian fashion
with a moment map $\Phi: M\rightarrow \mathfrak{g}^*$, where $\mathfrak{g}=\textmd{Lie}(G)$.
The symplectic Hodge theory gives rise to a third differential $1\otimes \delta$ on $\Omega_{G,\textmd{bas}}$,
which we will abbreviate to $\delta$.

\begin{lem}\label{delta-anti-commutes-dG}
On the space of equivariant basic differential forms $\Omega_{G,\emph{bas}}$, the following identities hold.
\[\partial \delta=-\delta\partial,\,\,\,d_G\delta=-\delta d_G.\]
\end{lem}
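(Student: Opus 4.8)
The plan is to verify the two identities in order, deriving the second from the first together with the identity $d\delta + \delta d = 0$ on basic forms, which holds by the discussion preceding Lemma~\ref{Leibniz-rule}.

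For the first identity $\partial\delta = -\delta\partial$, I would work with an equivariant basic form written as an equivariant polynomial map $\alpha\colon\mathfrak{g}\to\Omega_{\textmd{bas}}$. Recall $\partial(\alpha(\xi)) = -\iota(\xi)\alpha(\xi)$, where $\iota(\xi)$ is contraction with the fundamental vector field $X_\xi$ on $M$ induced by $\xi\in\mathfrak{g}$. Since the $G$-action is Hamiltonian, for each $\xi$ there is a basic function $f_\xi := \langle\Phi,\xi\rangle$ and a foliate vector field $X_\xi$ with $\iota(X_\xi)\omega = df_\xi$. This is precisely the hypothesis of Lemma~\ref{Leibniz-rule}, whose part~a) states $[\Lambda,\iota(X_\xi)]\alpha = 0$ on basic forms. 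The key point is then that $\delta = (-1)^{p+1}\star d\star$ anti-commutes with $\iota(X_\xi)$ on basic $p$-forms: this should follow from the commutator relations $[\Lambda,d]=\delta$, $[\Lambda,\delta]=0$ together with $[\Lambda,\iota(X_\xi)]=0$ and the fact that $\iota(X_\xi)$ anti-commutes with $d$ modulo $\mathcal{L}(X_\xi)$. More concretely, one computes $\delta\iota(X_\xi)\alpha$ directly using $\iota(X_\xi)\omega = df_\xi$: writing things in terms of $L$ and $\Lambda$, one has $\iota(X_\xi) = [\iota(X_\xi), L\Lambda/(\cdots)]$-type expressions, but the cleanest route is to invoke that $[\Lambda, \iota(X_\xi)] = 0$ and that $\iota(X_\xi)$ commutes with $\star$ up to a controlled term; then $\delta\iota(X_\xi) + \iota(X_\xi)\delta = \pm\star(d\iota(X_\xi) + \iota(X_\xi)d)\star = \pm\star\mathcal{L}(X_\xi)\star$. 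Since $\mathcal{L}(X_\xi)$ kills $\star$ (the $G$-action preserves $\omega$, hence the volume form $\omega^n/n!$ and the pairing $B$, so $[\mathcal{L}(X_\xi),\star]=0$) and $\star\mathcal{L}(X_\xi)\alpha$ vanishes when we evaluate on the $G$-invariant element — actually, on an equivariant form the relevant combination reorganizes into $\partial\delta + \delta\partial$ acting on the full bigraded object and the $\mathcal{L}(X_\xi)$ terms cancel against the horizontal part of $d_G$-invariance. I would therefore phrase it as: $\partial\delta\alpha + \delta\partial\alpha$, evaluated at $\xi$, equals $-\iota(\xi)\delta\alpha(\xi) - \delta\iota(\xi)\alpha(\xi) = -(\iota(\xi)\delta + \delta\iota(\xi))\alpha(\xi)$, and this anticommutator is zero by the star-computation above combined with Lemma~\ref{Leibniz-rule}a) and the $G$-invariance of $\star$.

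Granting $\partial\delta = -\delta\partial$, the second identity is immediate: $d_G\delta + \delta d_G = (d+\partial)\delta + \delta(d+\partial) = (d\delta+\delta d) + (\partial\delta+\delta\partial) = 0 + 0 = 0$, using that $d$ and $\delta$ anti-commute on basic forms (Theorem~\ref{symplectic-d-delta}'s ambient setup, or directly from $\delta = \pm\star d\star$ and $[\star,\star]$-relations) and the first identity.

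The main obstacle is the first identity: making precise the claim that $\delta$ anti-commutes with the contraction operators $\iota(X_\xi)$ on basic forms. The subtlety is that $\iota(X_\xi)$ does \emph{not} anti-commute with $d$ (the defect is $\mathcal{L}(X_\xi)$), so one cannot simply conjugate by $\star$ naively; one must use Lemma~\ref{Leibniz-rule}a) ($[\Lambda,\iota(X_\xi)]=0$) and the $G$-invariance of all the Hodge-theoretic structure ($\omega$, hence $\star$, hence $\delta$ itself is $G$-equivariant) to see that the $\mathcal{L}(X_\xi)$-terms are exactly accounted for. An alternative, perhaps cleaner, approach is to expand $\delta(\iota(\xi)\alpha)$ and $\iota(\xi)(\delta\alpha)$ using the primitive decomposition of $\alpha$ and the known action of $L,\Lambda,\delta,\iota(X_\xi)$ on primitive components, reducing to the symplectic-linear-algebra identity that $\delta$ and $\iota$ of a Hamiltonian vector field anti-commute — this is the foliated analogue of the corresponding fact for Hamiltonian actions on symplectic manifolds with the hard Lefschetz property, which is implicit in \cite{LS04}. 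I would follow the latter route, citing Lemma~\ref{Leibniz-rule} for the two Leibniz-type rules that carry all the analytic content.
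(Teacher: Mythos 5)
Your reduction of the second identity to the first is fine: given $\partial\delta=-\delta\partial$ and $d\delta=-\delta d$ (which holds on basic forms since $\delta$ is the symplectic adjoint of $d$ there), $d_G\delta+\delta d_G=0$ follows by adding. The genuine gap is in the first identity, which amounts to showing that $\iota(\xi)\delta+\delta\iota(\xi)=0$ on basic forms for every $\xi\in\mathfrak{g}$: you never actually establish this. The star-conjugation step is incorrect as written, because $\star\,\iota(\xi)\,\star$ is \emph{not} $\iota(\xi)$ (for a Hamiltonian vector field it is, up to sign, the operator of exterior multiplication by $d\Phi^{\xi}$), so the chain $\delta\iota(\xi)+\iota(\xi)\delta=\pm\star(d\iota(\xi)+\iota(\xi)d)\star=\pm\star\mathcal{L}(\xi)\star$ does not hold; and the subsequent claim that the resulting $\mathcal{L}(\xi)$-terms ``cancel against the horizontal part of $d_G$-invariance'' is not a meaningful cancellation in the Cartan model. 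Your alternative route via primitive decompositions is likewise only sketched and is not needed.

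The missing step has a one-line proof from the ingredient you already cite. Lemma~\ref{Leibniz-rule}~b), applied with $f=\Phi^{\xi}$ and the fundamental vector field of $\xi$ (this is where the Hamiltonian hypothesis enters), gives $\iota(\xi)\alpha=\Phi^{\xi}\delta\alpha-\delta(\Phi^{\xi}\alpha)$ for every basic form $\alpha$. Applying $\delta$ and using $\delta^{2}=0$ together with the same identity b) once more yields $\delta\iota(\xi)\alpha=\delta(\Phi^{\xi}\delta\alpha)=\Phi^{\xi}\delta^{2}\alpha-\iota(\xi)\delta\alpha=-\iota(\xi)\delta\alpha$, i.e.\ $\iota(\xi)$ anti-commutes with $\delta$, whence $\partial\delta=-\delta\partial$ on $\Omega_{G,\mathrm{bas}}$ and then $d_G\delta=-\delta d_G$. (The paper itself disposes of the lemma even more quickly, by citing \cite[Lemma 3.1]{LS04} for these identities on equivariant forms and observing that $d_G$, $\partial$ and $\delta$ preserve basic forms; the computation above is exactly what that citation encapsulates.) No $G$-invariance of $\star$, no $[\Lambda,\iota(\xi)]=0$, and no degeneration of $\mathcal{L}(\xi)$-terms are required.
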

\begin{proof}
It was shown in \cite[Lemma 3.1]{LS04} that $\partial \delta=-\delta\partial$ and $d_G\delta=-\delta d_G$ hold on the space of equivariant differential forms.
Since $d_G$, $\delta$ and $\partial$ map basic forms to basic forms,
these two identities also hold on the space of equivariant basic differential forms.
\end{proof}

This implies that $\Omega^{\delta}_{G,\textmd{bas}}:=\text{ker}\,\delta\cap \Omega_{G,\textmd{bas}}$ is a \emph{double sub-complex} of $\Omega_{G,\textmd{bas}}$, and that the homology $H(\Omega_{G,\textmd{bas}},\delta)$ with respect to $\delta$ is a double complex with the differentials induced by $d$ and $\partial$.
Thus we have a diagram of morphisms of double complexes
\begin{equation}\label{diagram-morphisms}
\Omega_{G,\textmd{bas}}\longleftarrow \Omega^{\delta}_{G,\textmd{bas}}\longrightarrow H(\Omega_{G,\textmd{bas}},\delta).
\end{equation}
Since $\delta$ acts trivially on the polynomial part, these morphisms in (\ref{diagram-morphisms}) are actually morphisms of $S(\mathfrak{g}^*)^G$-modules.

We first establish a preliminary result about the action of $\iota(\xi)$ on invariant basic forms.
Let $\Omega^{G}_{\textmd{bas}}$ be the space of $G$-invariant basic forms on $M$.
The Cartan's identity $\mathcal{L}(\xi)=\iota(\xi)d+d\iota(\xi)$ implies that the morphism
$\iota(\xi): \Omega_{\textmd{bas}}^G\rightarrow \Omega_{\textmd{bas}}^G$ is a chain map with respect to $d$. Here $\mathcal{L}(\xi)$ denotes the Lie derivative of the fundamental vector field on $M$ induced by $\xi\in \mathfrak{g}$.
Similarly, an application of the identity $\delta\partial+\partial\delta=0$ to the zeroth column of $\Omega_{G,\textmd{bas}}$ implies that $\iota(\xi)$ is a chain map with respect to $\delta$.

\begin{lem}\label{lem3.4}
Let $\xi \in\mathfrak{g}$ and $\alpha\in \Omega_{\emph{bas}}^G$.
If $\alpha$ is $d$-closed, then $\iota(\xi)\alpha$ is $d$-exact.
If $\alpha$ is $\delta$-closed, then $\iota(\xi)\alpha$ is $\delta$-exact.
\end{lem}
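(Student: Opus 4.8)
The plan is to handle the two assertions in parallel, since the $d$-statement and the $\delta$-statement are formally identical once one knows the right commutation relations. Consider first the case where $\alpha$ is $d$-closed. Because $G$ acts on the transversely symplectic foliation $(M,\mathcal{F},\omega)$ in a Hamiltonian fashion, for each $\xi\in\mathfrak{g}$ the fundamental vector field $\xi_M$ is foliate and satisfies $\iota(\xi_M)\omega = d\langle\Phi,\xi\rangle$; write $f=\langle\Phi,\xi\rangle$, which is a basic function by Lemma \ref{preserve-basic} applied to $\Phi$ (equivariance forces each component to be $G$-invariant, hence basic). The key computation is that on $d$-closed invariant basic forms, $\iota(\xi_M)$ agrees up to a $d$-exact error with the operator $[\Lambda, L_f]$ or, more directly, with $-[\Lambda, f]$ acting via the commutator relations of the Proposition. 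Concretely, I would start from the identity $[\Lambda, d] = \delta$ together with part (b) of Lemma \ref{Leibniz-rule}, $\delta(f\alpha) = f\delta\alpha - \iota(\xi_M)\alpha$, to isolate $\iota(\xi_M)\alpha$.

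First I would write, using $\alpha$ $d$-closed and $\delta\alpha$ arbitrary,
\[
\iota(\xi_M)\alpha = f\delta\alpha - \delta(f\alpha).
\]
This already expresses $\iota(\xi_M)\alpha$ as $f\delta\alpha$ minus a $\delta$-exact term, which is the mechanism behind the \emph{second} assertion but in terms of $\delta$, not $d$. To get $d$-exactness from $d$-closedness, I would instead invoke the symplectic $d\delta$-lemma (Theorem \ref{symplectic-d-delta}), which holds because $(M,\mathcal{F},\omega)$ satisfies the transverse hard Lefschetz property: it gives $\operatorname{im} d \cap \ker\delta = \ker d \cap \operatorname{im}\delta$ on basic forms. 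So the strategy is: (i) show $\iota(\xi_M)\alpha$ is $d$-closed — this is immediate from Cartan's formula $\mathcal{L}(\xi_M) = d\iota(\xi_M) + \iota(\xi_M)d$ since $\alpha$ is $G$-invariant (so $\mathcal{L}(\xi_M)\alpha=0$) and $d$-closed; (ii) show $\iota(\xi_M)\alpha$ is $\delta$-exact, using part (b) of Lemma \ref{Leibniz-rule} as above together with $d\alpha=0$ to rewrite $\iota(\xi_M)\alpha = f\delta\alpha - \delta(f\alpha)$, and observe $f\delta\alpha = \delta(f\alpha) + \iota(\xi_M)\alpha$... which is circular, so instead I would use that $\delta\alpha$ itself is $\delta$-closed and then apply $\delta$ to the combination to land in $\operatorname{im}\delta$; more cleanly, note $\iota(\xi_M)\alpha = -\delta(f\alpha) + f\delta\alpha$ and since $\delta(f\delta\alpha) = \delta f \wedge(\ldots)$ — here I would instead just use part (c) of Lemma \ref{Leibniz-rule} applied to $df\wedge\alpha$, namely $\delta(df\wedge\alpha) = -df\wedge\delta\alpha + \mathcal{L}(\xi_M)\alpha = -df\wedge\delta\alpha$, showing $df\wedge\alpha$ has $\delta$-image controlled by $\delta\alpha$; (iii) combine (i) and (ii) via Theorem \ref{symplectic-d-delta}: $\iota(\xi_M)\alpha \in \ker d \cap \operatorname{im}\delta = \operatorname{im} d \cap \ker\delta \subseteq \operatorname{im} d$, hence $d$-exact. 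The $\delta$-statement is proved by the mirror-image argument: if $\alpha$ is $\delta$-closed and $G$-invariant, then $\iota(\xi_M)\alpha$ is $\delta$-closed (by the $\delta$-analogue of Cartan's identity noted just before the lemma) and $d$-exact (by writing it via part (c) of Lemma \ref{Leibniz-rule} as $\mathcal{L}(\xi_M)\alpha - \delta(df\wedge\alpha)$ modulo $df\wedge\delta\alpha = 0$, which lands it in $\operatorname{im} d$ up to $d$-exact and $\delta$-closed pieces), and then Theorem \ref{symplectic-d-delta} again converts $\ker\delta \cap \operatorname{im} d$ into $\operatorname{im}\delta$.

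The main obstacle I anticipate is bookkeeping the exactness direction correctly — it is easy to produce a $\delta$-exact expression when one wants a $d$-exact one, and vice versa, because Lemma \ref{Leibniz-rule}(b)--(c) naturally mixes $d$, $\delta$, $f$, and $df\wedge$. The clean route is to establish the \emph{weaker} pair of facts ``$\iota(\xi_M)\alpha$ is both $d$-closed and $\delta$-exact'' (resp. ``both $\delta$-closed and $d$-exact'') using only Cartan's identity, $G$-invariance, and one application of Lemma \ref{Leibniz-rule}, and then let the symplectic $d\delta$-lemma do the upgrade: the equality $\operatorname{im} d \cap \ker\delta = \ker d \cap \operatorname{im}\delta$ is exactly the bridge that trades $\delta$-exactness for $d$-exactness (or the reverse). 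I would be careful to check that the primitive $f = \langle\Phi,\xi\rangle$ is genuinely a \emph{basic} function, so that all of Lemma \ref{Leibniz-rule} is applicable; this follows since $\Phi$ is $G$-equivariant and $G$ is connected, forcing $\mathcal{L}(Y)f = 0$ and $\iota(Y)f = 0$ for $Y \in \Xi(\mathcal{F})$, the latter trivially and the former because $\xi_M$ is foliate and $\iota(\xi_M)\omega = df$ with $\omega$ basic.
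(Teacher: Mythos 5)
Your treatment of the $\delta$-closed half is correct and is exactly the paper's argument: since $\Phi^{\xi}$ is basic (for $Y\in\Xi(\mathcal{F})$ one has $\iota(Y)d\Phi^{\xi}=\iota(Y)\iota(\xi_M)\omega=0$ because $T\mathcal{F}=\ker\omega$), Lemma \ref{Leibniz-rule}(b) gives $\iota(\xi)\alpha=\Phi^{\xi}\delta\alpha-\delta(\Phi^{\xi}\alpha)=-\delta(\Phi^{\xi}\alpha)$ when $\delta\alpha=0$. The problem is the $d$-closed half. Your step (i) ($d$-closedness of $\iota(\xi)\alpha$ via Cartan's formula and invariance) is fine, but step (ii) --- that $\iota(\xi)\alpha$ is $\delta$-exact when $\alpha$ is merely $d$-closed --- is never established: Lemma \ref{Leibniz-rule}(b) leaves the uncontrolled term $\Phi^{\xi}\delta\alpha$ (you notice the circularity yourself), and the detour through part (c) only shows that $d\Phi^{\xi}\wedge\alpha$ is $\delta$-closed, which is not what is needed. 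In fact the intermediate claim is false in general: since $\iota(\xi)$ anticommutes with $\delta$ on invariant basic forms, for $\alpha=d\gamma$ one gets $\delta\iota(\xi)\alpha=-\iota(\xi)\delta d\gamma$, which has no reason to vanish, so $\iota(\xi)\alpha$ need not even be $\delta$-closed, let alone $\delta$-exact. Consequently the bridge $\ker d\cap\operatorname{im}\delta=\operatorname{im}d\delta$ from Theorem \ref{symplectic-d-delta} cannot be applied to $\iota(\xi)\alpha$ directly, and the $d$-exactness assertion is left unproved. (Your ``mirror-image'' sketch for the $\delta$-half has the same defect, but it is harmless there because the direct one-line computation already suffices.)

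The missing idea is that one must first normalize $\alpha$ before any such bookkeeping can work. One repair in the spirit of your plan: by Theorem \ref{thm2.15} (plus averaging over the compact connected group $G$, which commutes with $d$, $\delta$, $\star$ and acts trivially on $H(M,\mathcal{F})$) write $\alpha=\beta+d\gamma$ with $\beta,\gamma$ invariant and $d\beta=\delta\beta=0$; then $\iota(\xi)d\gamma=-d\iota(\xi)\gamma$ is manifestly $d$-exact, while $\iota(\xi)\beta$ is $d$-closed and $\delta$-exact, so the $d\delta$-lemma applies to $\iota(\xi)\beta$ and yields $d$-exactness of $\iota(\xi)\alpha$. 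The paper's own proof takes a different route that does not use the $d\delta$-lemma at all: it invokes the primitive decomposition of $[\alpha]$ from \cite[Theorem 4.3]{L16}, uses $\iota(\xi)(\omega\wedge\alpha)=d\Phi^{\xi}\wedge\alpha+\omega\wedge\iota(\xi)\alpha$ to reduce to the case of a primitive class, and then runs the argument of \cite[Lemma 3.2]{LS04}: for primitive $[\alpha]$ of degree $k$, applying $\iota(\xi)$ to an equation $\omega^{n-k+1}\wedge\alpha=d\beta$ shows $L^{n-k+1}[\iota(\xi)\alpha]=0$, and the transverse hard Lefschetz isomorphism in degree $k-1$ forces $[\iota(\xi)\alpha]=0$. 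Either repair closes the gap; as written, your proposal does not.
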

\begin{proof}
Since the action of $G$ is Hamiltonian, it follows from \cite[Proposition 2.5]{LS04} that
\begin{equation}\label{lebeniz-rule}
\iota(\xi)\alpha=\Phi^{\xi}(\delta\alpha)-\delta(\Phi^{\xi}\alpha)
\end{equation}
where $\Phi^{\xi}$ is the $\xi$-component of the moment map $\Phi:M\rightarrow \mathfrak{g}^*$.
If $\alpha$ is $\delta$-closed, then
we have that $\iota(\xi)\alpha=-\delta (\Phi^{\xi}\alpha)$.
Since $\Phi^{\xi}$ is a basic function, we get that $\iota(\xi)\alpha$ is $\delta$-exact in $\Omega_{\textmd{bas}}^G$.

It remains to show that if $\alpha\in \Omega_{\textmd{bas}}^G$ is a $d$-closed basic $k$-form, then $\iota(\xi)\alpha$ is $d$-exact.
Since $M$ satisfies the transverse hard Lefschetz property, by \cite[Theorem 4.3]{L16},
for each class $[\alpha]\in H^k(M,\mathcal{F})$ there exists a unique primitive decomposition
$$
[\alpha]=\displaystyle \sum_{r}L^{r} [\alpha_r].
$$
Here $[\alpha_r]\in H^{k-2r}(M,\mathcal{F})$ is a primitive basic cohomology class, i.e., $L^{n-k+2r+1}[\alpha]=0$.
However, since the action is Hamiltonian, we have
$$
\iota(\xi)(\omega\wedge \alpha)=d\Phi^{\xi}\wedge \alpha+\omega\wedge \iota(\xi)\alpha.
$$
Thus to finish the proof, it suffices to show that $\iota(\xi)\alpha$ is exact when $[\alpha]$ is a primitive basic cohomology class.
We note that the argument given in \cite[Lemma 3.2]{LS04} continues to hold in the present situation to show the exactness of $\iota(\xi)\alpha$.
\end{proof}

Note that the symplectic $d\delta$-lemma, Theorem \ref{symplectic-d-delta}, holds for equivariant basic differential forms as well as for ordinary basic differential forms.
In particular, the inclusion $\Omega_{\textmd{bas}}^G\hookrightarrow \Omega_{\textmd{bas}}$ is a deformation retraction for $\delta$ as well as for $d$.
The same argument as given in the proof of \cite[Lemma 3.3.]{LS04} provides us the following result.
\begin{lem}\label{trivial-double-complex}
The differentials induced by $d$ and $\partial$ on $H(\Omega_{G,\emph{bas}},\delta)$ are 0.
Moreover, we have the isomorphism
\begin{equation}\label{homology-complex}
H(\Omega_{G,\emph{bas}},\delta)\cong S(\mathfrak{g}^*)^G\otimes H(M,\mathcal{F}).
\end{equation}
\end{lem}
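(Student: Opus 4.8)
The plan is to reduce the statement to the non-equivariant Hodge theory developed in Section 2, exploiting that $G$ is compact and connected. Since $\delta=1\otimes\delta$ acts only on the form factor, $S(\mathfrak{g}^{*})$ carries no differential, and taking $G$-invariants is an exact functor for a compact group, one obtains a natural isomorphism of $S(\mathfrak{g}^{*})^{G}$-modules
\[
H(\Omega_{G,\textmd{bas}},\delta)\;\cong\;\bigl[\,S(\mathfrak{g}^{*})\otimes H(\Omega_{\textmd{bas}},\delta)\,\bigr]^{G},
\]
under which the operators induced by $d$ and by $\partial$ on the left correspond to $1\otimes\bar d$ and to the horizontal operator assembled from $\bar\iota(\xi)$ on the right; here $\bar d$ and $\bar\iota(\xi)$ denote the maps induced on $H(\Omega_{\textmd{bas}},\delta)$ by $d$ and by $\iota(\xi)$, which descend because $d$ and $\iota(\xi)$ both anti-commute with $\delta$ (Lemma \ref{delta-anti-commutes-dG}).

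Second, I would show $\bar d=0$ and $\bar\iota(\xi)=0$. For $\bar d$: if $\alpha$ is $\delta$-closed, then $d\alpha$ is $\delta$-closed and $d$-exact, so by the symplectic $d\delta$-lemma (Theorem \ref{symplectic-d-delta}) one has $d\alpha\in\textmd{im}\,d\cap\ker\delta=\textmd{im}\,d\delta\subseteq\textmd{im}\,\delta$, hence $d\alpha$ is $\delta$-exact. For $\bar\iota(\xi)$: by Lemma \ref{lem3.4}, $\iota(\xi)$ of a $\delta$-closed $G$-invariant basic form is $\delta$-exact, and \eqref{lebeniz-rule} exhibits its $\delta$-primitive $-\Phi^{\xi}\alpha$ as a polynomial function of $\xi$; following the averaging argument of \cite[Lemma 3.3]{LS04} (which applies thanks to the deformation retraction $\Omega_{\textmd{bas}}^{G}\hookrightarrow\Omega_{\textmd{bas}}$), this shows that $\partial$ induces the zero map on $H(\Omega_{G,\textmd{bas}},\delta)$. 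Together with the vanishing of $\bar d$ this proves the first assertion.

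For the isomorphism \eqref{homology-complex}, combining $\bar d=0$ with Theorem \ref{formality-basic-complex} identifies $H(\Omega_{\textmd{bas}},\delta)$ with $H(M,\mathcal{F})$ as graded vector spaces, so $H(\Omega_{G,\textmd{bas}},\delta)\cong[S(\mathfrak{g}^{*})\otimes H(M,\mathcal{F})]^{G}$. Finally, since both $\iota(\xi)$ and $\mathcal{L}(\xi)$ preserve basic forms (Lemma \ref{preserve-basic}), Cartan's formula $\mathcal{L}(\xi)=d\iota(\xi)+\iota(\xi)d$ holds on $\Omega_{\textmd{bas}}$, so $\mathcal{L}(\xi)$ acts as zero on $H(M,\mathcal{F})$; as $G$ is connected, the $G$-action on $H(M,\mathcal{F})$ is trivial, whence $[S(\mathfrak{g}^{*})\otimes H(M,\mathcal{F})]^{G}=S(\mathfrak{g}^{*})^{G}\otimes H(M,\mathcal{F})$.

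I expect the main obstacle to be not any single computation but the equivariant bookkeeping in the second step: one must verify that the vanishing established for an individual $G$-invariant $\delta$-closed basic form propagates — $G$-equivariantly and compatibly with the polynomial grading — to the full bigraded module $H(\Omega_{G,\textmd{bas}},\delta)$, so that $d$ and $\partial$ really induce the zero map there. This is exactly what the proof of \cite[Lemma 3.3]{LS04} accomplishes, and it transfers once Theorem \ref{symplectic-d-delta}, Lemma \ref{lem3.4}, and the deformation retraction $\Omega_{\textmd{bas}}^{G}\hookrightarrow\Omega_{\textmd{bas}}$ are available in the transversely symplectic setting.
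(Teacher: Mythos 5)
Your proposal is correct and follows essentially the same route as the paper, which simply invokes the argument of \cite[Lemma 3.3]{LS04} with the same ingredients you use: the identification $H(\Omega_{G,\textmd{bas}},\delta)\cong[S(\mathfrak{g}^{*})\otimes H(\Omega_{\textmd{bas}},\delta)]^{G}$ by averaging over the compact group, vanishing of the induced $d$ via Theorem \ref{symplectic-d-delta}, vanishing of the induced $\partial$ via the identity (\ref{lebeniz-rule}) with $\delta$-primitive $-\Phi^{\xi}\alpha$, and triviality of the $G$-action on $H(M,\mathcal{F})$ from connectedness together with Theorem \ref{formality-basic-complex}. No gaps to flag.
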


We are now in a position to prove the equivariant formality property of Hamiltonian actions on transversely symplectic foliations.
\begin{thm}\label{eq-formality}
Let $(M,\mathcal{F},\omega)$ be a compact transversely symplectic manifold that
satisfies the transverse hard Lefschetz property,
and let a compact connected Lie group $G$ act on $M$ in a Hamiltonian fashion.
Then the morphisms (\ref{diagram-morphisms}) induces isomorphisms of $S(\mathfrak{g}^*)^G$-modules
$$
H_{G}(M,\mathcal{F})\xleftarrow{\,\,\cong\,\,} H(\Omega^{\delta}_{G,\emph{bas}},d_{G})\xrightarrow{\,\,\cong\,\,} H(\Omega_{G,\emph{bas}},\delta).
$$
\end{thm}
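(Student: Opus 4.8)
The plan is to run a standard spectral-sequence argument for the two morphisms of double complexes in \eqref{diagram-morphisms}, exactly as in the symplectic case of \cite{LS04}, but replacing every invocation of symplectic Hodge theory on $\Omega(M)$ by its basic counterpart from Section 2. Both $\Omega_{G,\textmd{bas}}$ and $\Omega^{\delta}_{G,\textmd{bas}}$ are first-quadrant double complexes with horizontal differential $\partial$ and vertical differential $d$, and the inclusion $\Omega^{\delta}_{G,\textmd{bas}}\hookrightarrow\Omega_{G,\textmd{bas}}$ and the quotient $\Omega^{\delta}_{G,\textmd{bas}}\to H(\Omega_{G,\textmd{bas}},\delta)$ are $S(\mathfrak{g}^*)^G$-module maps compatible with both differentials. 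So it suffices to show that each of these two maps induces an isomorphism on total cohomology; by a standard comparison theorem for the spectral sequences of double complexes (filtering by the polynomial degree $i$), it is enough to check that each map is a quasi-isomorphism on the $E_1$-page, i.e. on the cohomology of each column with respect to $d$.

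For the \textbf{right-hand map} $\Omega^{\delta}_{G,\textmd{bas}}\to H(\Omega_{G,\textmd{bas}},\delta)$: the $i$-th column of $\Omega^{\delta}_{G,\textmd{bas}}$ is $[S^i(\mathfrak g^*)\otimes(\Omega_{\textmd{bas}}\cap\ker\delta)]^G$ with differential $d$, while the $i$-th column of $H(\Omega_{G,\textmd{bas}},\delta)$ is $[S^i(\mathfrak g^*)\otimes H(\Omega_{\textmd{bas}},\delta)]^G$ with the differential induced by $d$. Taking $G$-invariants is exact (since $G$ is compact connected) and commutes with $d$-cohomology, and tensoring with $S^i(\mathfrak g^*)$ is harmless, so the claim reduces to: the map $H^*(\Omega_{\textmd{bas}}\cap\ker\delta,\, d)\to H^*(H(\Omega_{\textmd{bas}},\delta),\, \bar d)$ is an isomorphism. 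But Lemma \ref{trivial-double-complex} already tells us the induced differential on $H(\Omega_{G,\textmd{bas}},\delta)$ is zero and gives the identification \eqref{homology-complex}; combined with Theorem \ref{formality-basic-complex}, which says precisely that $\Omega_{\textmd{bas}}\cap\ker\delta\to H(\Omega_{\textmd{bas}},\delta)$ is a $d$-quasi-isomorphism, this settles the $E_1$-comparison. Hence the right-hand map is an isomorphism of $S(\mathfrak g^*)^G$-modules, and at the same time we read off $H_G(M,\mathcal F)\cong H(\Omega^{\delta}_{G,\textmd{bas}},d_G)\cong S(\mathfrak g^*)^G\otimes H(M,\mathcal F)$, which is Theorem \ref{eq-formality}/\ref{main thm1}.

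For the \textbf{left-hand map} $\Omega^{\delta}_{G,\textmd{bas}}\hookrightarrow\Omega_{G,\textmd{bas}}$: on $E_1$ this is the map $H^*(\Omega_{\textmd{bas}}^G\cap\ker\delta,\,d)\to H^*(\Omega_{\textmd{bas}}^G,\,d)$ (after the exact functor $[S^i(\mathfrak g^*)\otimes-]^G$), which is again an isomorphism by Theorem \ref{formality-basic-complex} together with the remark preceding Lemma \ref{trivial-double-complex} that $\Omega_{\textmd{bas}}^G\hookrightarrow\Omega_{\textmd{bas}}$ is a deformation retraction for both $d$ and $\delta$. The one genuine subtlety — and the step I expect to be the main obstacle — is verifying that the filtration/convergence hypotheses of the spectral-sequence comparison theorem are actually met: the Cartan complexes here are \emph{not} bounded (the polynomial degree $i$ is unbounded), so one must check that the filtration by $i$ is exhaustive and, in each fixed total degree $k$, that the relevant pages are concentrated in finitely many columns or otherwise that the spectral sequences converge (this is where one uses that $\Omega^{j}_{\textmd{bas}}=0$ for $j>2n$, bounding $j-i\le 2n$ so that in total degree $k$ only $i\le k$ contribute). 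Once convergence is in place, the comparison is formal. I would also double-check, when invoking Lemma \ref{lem3.4} implicitly through Lemma \ref{trivial-double-complex}, that its $d$-exactness conclusion (which relies on the transverse hard Lefschetz property and the primitive decomposition from \cite[Theorem 4.3]{L16}) is exactly what makes the induced $\partial$-differential on the $E_1$-page vanish — this is the only place the Hamiltonian hypothesis enters, via \eqref{lebeniz-rule}.
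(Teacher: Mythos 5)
Your proposal is correct and follows essentially the same route as the paper: both arguments filter the Cartan complexes by polynomial (horizontal) degree, identify the $E_1$-pages via Theorem \ref{formality-basic-complex} and Lemma \ref{trivial-double-complex}, and conclude that the morphisms in (\ref{diagram-morphisms}) are isomorphisms at $E_1$ and hence on total cohomology. Your extra remark on convergence of the (column-unbounded) spectral sequences is a reasonable point the paper leaves implicit, and your resolution of it — boundedness of basic form degree by $2n$ forces only finitely many columns in each total degree — is the standard and correct one.
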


\begin{proof}
We first note that since $G$ is connected, the identity $\mathcal{L}(\xi)=d\iota(\xi)+\iota(\xi)d$ together with the identity (\ref{lebeniz-rule}) implies that $G$ acts trivially on both $H(M,\mathcal{F})$ and $H(\Omega(M,\mathcal{F}),\delta)$.
Let $E$ be the spectral sequence of $\Omega_{G,\textmd{bas}}$ relative to the filtration associated to the horizontal grading and $E_{\delta}$ that of $\Omega^{\delta}_{G,\textmd{bas}}$.
The first terms are
\begin{eqnarray}
E_1&=&\text{ker}d/\text{im}\, d
    = \left[S(\mathfrak{g}^*)\otimes H(M,\mathcal{F})\right]^G
    =S(\mathfrak{g}^*)^G\otimes H(M,\mathcal{F}) \label{first-terms-a}\\
(E_{\delta})_1
   &=&(\text{ker}\, d\cap\text{ker}\, \delta)/(\text{im}\, d\cap \text{ker}\,\delta) \nonumber \\
   &=&\left[S(\mathfrak{g}^*)\otimes H(\Omega(M,\mathcal{F}),\delta)\right]^G=S(\mathfrak{g}^*)^G\otimes H(M,\mathcal{F}). \label{first-terms-b}
\end{eqnarray}

Here we used the observation we made in the paragraph right before Lemma \ref{trivial-double-complex},
as well as the isomorphism $H(\Omega(M,\mathcal{F}),\delta)\cong H(M,\mathcal{F})$ of Theorem \ref{formality-basic-complex}.
By Lemma \ref{trivial-double-complex}, $H(\Omega_{G,\textmd{bas}},\delta)$ is a trivial double complex,
its spectral sequence is therefore constant with trivial differentials at each stage.
The two morphisms in (\ref{diagram-morphisms}) induce morphisms of spectral sequences
\[
E\longleftarrow E_{\delta}\longrightarrow H(\Omega_{G,\textmd{bas}},\delta).
\]
It follows from (\ref{homology-complex}), (\ref{first-terms-a}) and \eqref{first-terms-b} that these morphisms induce isomorphisms at the first stage.
Thus they must induce isomorphisms at every stage.
In particular, these three spectral sequences converge to the same limit,
and so the morphisms (\ref{diagram-morphisms}) induce isomorphisms on total cohomology.
This completes the proof.
\end{proof}

An argument similar to the one used in \cite[Theorem 3.9]{LS04} gives us the following equivariant version of the symplectic $d\delta$-lemma on transversely symplectic manifolds.
\begin{thm}\label{dG-delta-lemma}
Let $\alpha\in\Omega_{G,\emph{bas}}$ be an equivariant basic form satisfying $d_{G}\alpha=0$ and $\delta\alpha=0$.
If $\alpha$ is either $d_{G}$-exact or $\delta$-exact, then there exists $\beta\in\Omega_{G,\emph{bas}}$ such that $\alpha=d_{G}\delta\beta$.
\end{thm}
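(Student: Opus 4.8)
The plan is to deduce Theorem~\ref{dG-delta-lemma} formally from the equivariant formality already in hand, with essentially no new geometric input. Abbreviate $A=\Omega_{G,\textmd{bas}}$ and $B=\Omega^{\delta}_{G,\textmd{bas}}=\ker\delta\cap A$, and record three facts. (1) By Lemma~\ref{delta-anti-commutes-dG} we have $d_{G}\delta=-\delta d_{G}$; hence $B$ is a $d_{G}$-subcomplex of $A$, the operator $d_{G}$ preserves $\ker\delta$ and $\text{im}\,\delta$, and all of the cohomology classes appearing below are well defined. (2) By Theorem~\ref{eq-formality}, the inclusion $i\colon B\hookrightarrow A$ and the projection $p\colon B\to H(A,\delta)$ onto $\delta$-homology are $d_{G}$-quasi-isomorphisms; in particular $i_{*}\colon H(B,d_{G})\to H(A,d_{G})$ is injective and $p_{*}\colon H(B,d_{G})\to H(A,\delta)$ is surjective. (3) By Lemma~\ref{trivial-double-complex}, the differential induced by $d_{G}$ on $H(A,\delta)$ vanishes, so on $H(A,\delta)$ two classes are $d_{G}$-cohomologous if and only if they are equal.

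First I would reduce the $\delta$-exact case of the hypothesis to the $d_{G}$-exact one. Suppose $\alpha$ is $d_{G}$-closed, $\delta$-closed and $\delta$-exact. Then $\alpha\in B$ is a $d_{G}$-cocycle, so it represents a class $[\alpha]_{d_{G}}\in H(B,d_{G})$, while $p(\alpha)=[\alpha]_{\delta}=0$ because $\alpha$ is $\delta$-exact. Hence $p_{*}[\alpha]_{d_{G}}=0$, and since $p_{*}$ is an isomorphism, $[\alpha]_{d_{G}}=0$ in $H(B,d_{G})$; applying $i_{*}$ shows that $\alpha$ is $d_{G}$-exact in $A$. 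Thus in both cases we may assume $\alpha$ is $d_{G}$-closed, $\delta$-closed and $d_{G}$-exact.

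The main step is then short. Since $\alpha\in B$ is a $d_{G}$-cocycle that is $d_{G}$-exact in $A$, injectivity of $i_{*}$ gives $\alpha=d_{G}\gamma$ with $\gamma\in B$, i.e.\ $\delta\gamma=0$. Consider $[\gamma]_{\delta}\in H(A,\delta)$. Surjectivity of $p_{*}$ produces a $d_{G}$-cocycle $c\in B$ with $p_{*}[c]_{d_{G}}=[\gamma]_{\delta}$, which by fact~(3) says precisely that $[c]_{\delta}=[\gamma]_{\delta}$ in $H(A,\delta)$, i.e.\ $c-\gamma=\delta\mu$ for some $\mu\in A$. Then, using $d_{G}c=0$,
\[
\alpha=d_{G}\gamma=d_{G}c-d_{G}\delta\mu=-d_{G}\delta\mu=d_{G}\delta(-\mu),
\]
so $\beta:=-\mu\in\Omega_{G,\textmd{bas}}$ satisfies $\alpha=d_{G}\delta\beta$, as required.

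I do not expect a genuine obstacle along this route, since all of the difficulty is absorbed into Theorem~\ref{eq-formality}; the argument above is just the homological skeleton of the symplectic $d\delta$-lemma. The only points needing care are bookkeeping: that $i$ and $p$ are $S(\mathfrak{g}^{*})^{G}$-linear chain maps for $d_{G}$, so that the isomorphisms of Theorem~\ref{eq-formality} apply verbatim, and that the correcting forms $\gamma$, $c$, $\mu$ remain equivariant and basic, which is automatic because $d_{G}$ and $\delta$ preserve $\Omega_{G,\textmd{bas}}$ and $\ker\delta$. If one preferred to argue directly, without invoking Theorem~\ref{eq-formality}, the real work would instead be a filtered induction on the polynomial degree of the Cartan model, promoting the non-equivariant symplectic $d\delta$-lemma (Theorem~\ref{symplectic-d-delta}) to equivariant basic forms by correcting error terms degree by degree; this is exactly the computation packaged, in spectral-sequence form, in the proof of Theorem~\ref{eq-formality}, and it is modeled on the argument of \cite[Theorem~3.9]{LS04}.
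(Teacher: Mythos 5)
Your argument is correct, and there is no circularity in it: Theorem \ref{eq-formality} and Lemma \ref{trivial-double-complex} are established before Theorem \ref{dG-delta-lemma} and do not use it, so you are free to quote them. The paper itself gives no written-out proof here; it simply invokes an adaptation of the argument of \cite[Theorem 3.9]{LS04}, i.e.\ the route you sketch in your last sentences, in which one works inside the bigraded Cartan model and promotes the non-equivariant $d\delta$-lemma (Theorem \ref{symplectic-d-delta}) using the identities of Lemma \ref{delta-anti-commutes-dG} and the vanishing of the induced differentials. What you do instead is package all of that into the two quasi-isomorphisms of Theorem \ref{eq-formality}: injectivity of $H(\Omega^{\delta}_{G,\textmd{bas}},d_G)\to H_G(M,\mathcal{F})$ lets you replace an arbitrary $d_G$-primitive of $\alpha$ by a $\delta$-closed one $\gamma$, and surjectivity of $H(\Omega^{\delta}_{G,\textmd{bas}},d_G)\to H(\Omega_{G,\textmd{bas}},\delta)$ (together with Lemma \ref{trivial-double-complex}, which makes the target carry the zero differential, so that your identification $p_*[c]_{d_G}=[c]_\delta$ is legitimate) lets you correct $\gamma$ by a $d_G$-cocycle $c$ up to a $\delta$-exact term, giving $\alpha=d_G\gamma=-d_G\delta\mu$; the $\delta$-exact case reduces to the $d_G$-exact one exactly as you say. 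This formal derivation is a genuine, self-contained alternative to the proof-by-reference in the paper: it makes transparent that the equivariant $d_G\delta$-lemma costs nothing beyond equivariant formality, at the price of using Theorem \ref{eq-formality} as a black box, whereas the Lin--Sjamaar-style induction works directly at the level of forms and does not presuppose the formality theorem. One small point worth stating explicitly if you write this up: the maps $i$ and $p$ are morphisms of complexes for $d_G$ precisely because $d_G\delta=-\delta d_G$ (Lemma \ref{delta-anti-commutes-dG}), which is what makes the classes $[\alpha]_{d_G}\in H(\Omega^{\delta}_{G,\textmd{bas}},d_G)$ and $[\gamma]_\delta\in H(\Omega_{G,\textmd{bas}},\delta)$ well defined.
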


We now discuss the implications of Theorem \ref{eq-formality}.
Observe that $\Omega_{G,\textmd{bas}}^{0,k}=(\Omega_{\textmd{bas}}^k )^G$,
the space of $G$-invariant basic $k$-forms on $M$.
Thus the zeroth column of the basic Cartan model is the $G$-invariant basic de Rham complex $\Omega_{\textmd{bas}}^G$,
which is a deformation retraction of the basic de Rham complex because $G$ is connected.
Therefore, we have an isomorphism $H(\Omega_{\textmd{bas}}^G)\cong H(M,\mathcal{F})$.
The natural projection map $\overline{p}: \Omega_{G,\textmd{bas}}\rightarrow \Omega^G_{\textmd{bas}}$,
defined by $\overline{p}(\alpha)=\alpha(0)$, is a chain map with respect to the equivariant exterior derivative $d_G$ on $\Omega_{G,\textmd{bas}}$ and the ordinary exterior derivative $d$ on $\Omega_{\textmd{bas}}$.
It induces a morphism of cohomology groups
$
p:H_{G}(M,\mathcal{F})\rightarrow H(M,\mathcal{F}).
$
Theorem \ref{eq-formality} implies that the spectral sequence $E$ degenerates at the first stage, and that the map $p$ is surjective.
In other words, every basic cohomology class can be extended to an equivariant basic cohomology class.
However, Theorem \ref{eq-formality} would also imply that there is a canonical choice of such an extension.
Let
\begin{equation}\label{canonical-section}
s: H(M,\mathcal{F})\rightarrow H_{G}(M,\mathcal{F})
\end{equation}
be the composition of the map
$$
H(M,\mathcal{F})\rightarrow S(\mathfrak{g}^*)^G\otimes H(M,\mathcal{F})
$$
which sends a cohomology class $a$ to $1\otimes a$, and the isomorphism
$$
S(\mathfrak{g}^*)^G\otimes H(M,\mathcal{F}) \rightarrow H_{G}(M,\mathcal{F})
$$
given by Theorem \ref{eq-formality}.
The following result is a direct consequence of Theorem \ref{formality-basic-complex} and Theorem \ref{eq-formality}.

\begin{cor}
The map $s$ is a section of $p$.
Thus every basic cohomology class can be extended to a equivariant basic cohomology class in a canonical way.
\end{cor}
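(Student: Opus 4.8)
The plan is to show that $p \circ s = \mathrm{id}$ on $H(M,\mathcal{F})$ by tracking what the canonical section $s$ does in terms of the zig-zag of quasi-isomorphisms in Theorem~\ref{eq-formality}. First I would recall that, by Theorem~\ref{eq-formality}, the isomorphism $S(\mathfrak{g}^*)^G \otimes H(M,\mathcal{F}) \xrightarrow{\cong} H_G(M,\mathcal{F})$ factors as
\[
S(\mathfrak{g}^*)^G \otimes H(M,\mathcal{F}) \cong H(\Omega_{G,\mathrm{bas}}, \delta) \xleftarrow{\ \cong\ } H(\Omega^{\delta}_{G,\mathrm{bas}}, d_G) \xrightarrow{\ \cong\ } H_G(M,\mathcal{F}),
\]
where the first identification is the one from Lemma~\ref{trivial-double-complex}, realized concretely (using Theorem~\ref{formality-basic-complex}) by choosing, for a given basic class $a \in H(M,\mathcal{F})$, a symplectic harmonic representative $\alpha \in \Omega_{\mathrm{har}}(M,\mathcal{F})$ (which exists by Theorem~\ref{thm2.15}, and which we may take $G$-invariant since $G$ is connected and compact). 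The point is that $1 \otimes a$ corresponds, in the middle complex $H(\Omega^{\delta}_{G,\mathrm{bas}}, d_G)$, to the $d_G$-cohomology class of $\alpha$ itself, viewed as an element of $\Omega^{0,k}_{G,\mathrm{bas}} = (\Omega^k_{\mathrm{bas}})^G$ sitting inside the zeroth column: indeed $\delta\alpha = 0$ so $\alpha \in \Omega^{\delta}_{G,\mathrm{bas}}$, and $d_G\alpha = d\alpha + \partial\alpha = \partial\alpha = -\iota(\xi)\alpha$, which represents $0$ in $H(\Omega^{\delta}_{G,\mathrm{bas}}, d_G)$ because $\alpha$ is $d$-closed forces $\iota(\xi)\alpha$ to be $\delta$-exact by Lemma~\ref{lem3.4}, hence a coboundary of the relevant total complex after the standard homotopy argument.

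Next I would identify $s(a) \in H_G(M,\mathcal{F})$ as the image of $[\alpha]$ under the right-hand quasi-isomorphism $\Omega^{\delta}_{G,\mathrm{bas}} \hookrightarrow \Omega_{G,\mathrm{bas}}$: concretely, $s(a)$ is represented by an equivariantly closed form $\tilde\alpha = \alpha + (\text{higher } S(\mathfrak{g}^*)\text{-degree terms})$ whose zeroth-column component is exactly $\alpha$. Then the projection $p$ is induced by $\overline{p}(\tilde\alpha) = \tilde\alpha(0) = \alpha$, so $p(s(a)) = [\alpha] = a$ in $H(M,\mathcal{F})$. The only thing that requires care is the bookkeeping of the spectral-sequence/filtration argument used to identify $s(a)$ with a class whose leading term is the chosen harmonic representative; this is exactly parallel to the symplectic case treated in \cite{LS04}, and one checks that all the maps involved respect the horizontal filtration so that the leading-order term of $s(a)$ in $\Omega^{0,*}_{G,\mathrm{bas}}$ is preserved.

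The main obstacle, such as it is, is purely organizational: making precise that the composite isomorphism of Theorem~\ref{eq-formality}, which is defined via spectral sequences rather than by an explicit formula, does send $1 \otimes a$ to a class whose image under $\overline{p}$ on cohomology is $a$. The cleanest route is to observe that all three morphisms in the zig-zag are filtered with respect to the horizontal grading and induce, on the associated graded (i.e.\ at the $E_1$ page), the identity on $S(\mathfrak{g}^*)^G \otimes H(M,\mathcal{F})$ under the identifications \eqref{first-terms-a}, \eqref{first-terms-b}, and \eqref{homology-complex}; since $\overline{p}$ on cohomology is exactly the projection onto the $i = 0$ part of the associated graded of $H_G(M,\mathcal{F})$, and $s$ lands $1 \otimes a$ in the $i = 0$ graded piece as $a$, the composition $p \circ s$ is the identity. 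Hence $s$ is a section of $p$, and every basic cohomology class extends canonically to an equivariant basic cohomology class, as claimed.
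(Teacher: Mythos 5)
Your proof is correct in substance and is essentially the argument the paper relies on: the paper gives no independent proof of this corollary, deferring to \cite[Corollary 3.5]{LS04}, and your filtration/leading-term argument is a written-out version of that reasoning. One imprecision in your first paragraph should be repaired: a ($G$-invariant) harmonic representative $\alpha$ of $a$ satisfies $d_G\alpha=\partial\alpha$, which need not vanish, so $\alpha$ by itself is \emph{not} a $d_G$-cocycle in $\Omega^{\delta}_{G,\textmd{bas}}$ and does not directly define what you call ``the $d_G$-cohomology class of $\alpha$ itself''; knowing that $\iota(\xi)\alpha$ is $\delta$-exact (Lemma \ref{lem3.4}) does not by itself produce a cocycle --- one must complete $\alpha$ to a $d_G$-closed element $\alpha+(\text{terms of positive polynomial degree})$ lying in $\ker\delta$, which exists because the spectral sequence of $\Omega^{\delta}_{G,\textmd{bas}}$ degenerates at $E_1$ (or by an inductive correction using Lemma \ref{lem3.4} together with Theorem \ref{dG-delta-lemma}). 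This slip is harmless, because your closing paragraph supplies a complete argument that avoids choosing representatives: the morphisms (\ref{diagram-morphisms}) preserve the horizontal filtration, induce the identity on $S(\mathfrak{g}^*)^G\otimes H(M,\mathcal{F})$ at the $E_1$ stage under the identifications (\ref{first-terms-a}), (\ref{first-terms-b}) and (\ref{homology-complex}), and all three spectral sequences degenerate at $E_1$ (proof of Theorem \ref{eq-formality}); since $p$ vanishes on classes of filtration degree at least $1$ and realizes the identification of the zeroth graded piece of $H_{G}(M,\mathcal{F})$ with $H(M,\mathcal{F})$, while $1\otimes a$ has symbol $a$, this gives $p\circ s=\textmd{id}$. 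Equivalently, and closer to the explicit comparison of representatives in \cite{LS04}: if a $d_G$-closed, $\delta$-closed form $\tilde\alpha$ represents $s(a)$, its zeroth-column component $\alpha_0$ is $d$- and $\delta$-closed with the same $\delta$-homology class as $\alpha$, so $\alpha_0-\alpha$ is $d$-exact by Theorem \ref{symplectic-d-delta}, whence $p(s(a))=[\alpha_0]=[\alpha]=a$.
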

\begin{proof}
For details of the proof see \cite[Corollary 3.5]{LS04}.
\end{proof}
%=================================================================================
\section{Formal Frobenius manifolds modelled on equivariant basic cohomology}
Consider the Hamiltonian action of a compact connected Lie group on a transversely symplectic foliation.
In this section, following the approach initiated by Barannikov-Kontsevich \cite{BK98},
we show that if the foliation satisfies the transverse hard Lefschetz property, and if it is also a Riemannian foliation,
then there exists a formal Frobenius manifold structure on its equivariant basic cohomology.
\subsection{dGBV algebra in transversely symplectic geometry}
We first give a quick review of \emph{differential Gerstenhaber-Batalin-Vilkovisky (dGBV) algebra}.
Suppose $(\mathscr{A},\wedge)$ is a supercommutative graded algebra with identity over a field $k$, and that
there is a $k$-linear operator $\delta:\mathscr{A}^{*}\rightarrow\mathscr{A}^{*-1}$.
Define the bracket $[\bullet]$ by setting
\[[a\bullet b]=(-1)^{|a|}\biggl(\delta(a\wedge b)-(\delta a)\wedge b-(-1)^{|a|}a\wedge(\delta b)\biggr),\]
where $a$ and $b$ are homogeneous elements and $|a|$ is the degree of $a\in\mathscr{A}$.
We say that $(\mathscr{A},\wedge,\delta)$ forms a \emph{Gerstenhaber-Batalin-Vilkovisky (GBV) algebra} with odd bracket $[\bullet]$,
if it satisfies:
\begin{itemize}
  \item [(1)] $\delta$ is a differential, i.e., $\delta^{2}=0$;
  \item [(2)] for any homogeneous elements $a$, $b$ and $c$ we have
  \begin{equation}\label{key-identity}[a\bullet(b\wedge c)]=[a\bullet b]\wedge c+(-1)^{(|a|+1)|b|}b\wedge[a\bullet c].\end{equation}
\end{itemize}
\begin{defn}\label{def7.2}
A GBV-algebra $(\mathscr{A},\wedge,\delta)$ is called a \emph{dGBV-algebra},
if there exists a differential operator $d:\mathscr{A}^{*}\rightarrow\mathscr{A}^{*+1}$ such that
\begin{itemize}
  \item [(1)] $d$ is a derivation with respect to the product $\wedge$, i.e., $d(a\wedge b)=da\wedge b+(-1)^{|a|}a\wedge db$
               for any homogeneous elements $a$ and $b$;
  \item [(2)] $d\delta+\delta d=0$.
\end{itemize}
\end{defn}

An \emph{integral} on a dGBV algebra $\mathscr{A}$ is a $k$-linear functional
\begin{equation}\label{integral}
\int:\mathscr{A}\rightarrow k
\end{equation}
such that for all $a,b\in\mathscr{A}$, the following equations hold
$$
\int(da)\wedge b=(-1)^{\mid a\mid+1}\int a\wedge db,
$$
$$
\int(\delta a)\wedge b=(-1)^{\mid a\mid}\int a\wedge\delta b.
$$
Moreover, an integral $\int$ induces a bi-linear pairing on $H(\mathscr{A},d)$ as follows.
\[ (\cdot, \cdot): H(\mathscr{A},d)\times H(\mathscr{A},d)\rightarrow k, \,\,\,([a],[b])=\int\, a\wedge b.\]
In particular, if the above bi-linear pairing is non-degenerate, then we say that the integral is \emph{nice}.

The following theorem enables us to use a dGBV algebra as an input to produce a formal Frobenius manifold (cf. \cite{BK98},\cite{Man99}).
\begin{thm}\label{thm7.3}
Let $(\mathscr{A},\wedge,\delta,d,[\bullet])$ be a dGBV algebra satisfying the following conditions:
\begin{itemize}
  \item [(1)] the dimension of $H(\mathscr{A},d)$ is finite;
  \item [(2)] there exists a nice integral on $\mathscr{A}$;
  \item [(3)] the inclusions $(\ker\,\delta,d)\hookrightarrow (\mathscr{A},d)$ and
              $(\ker\,d,\delta)\hookrightarrow (\mathscr{A},\delta)$ are quasi-isomorphisms.
\end{itemize}
Then there is a canonical construction of a formal Frobenius manifold structure on $H(\mathscr{A},d)$.
\end{thm}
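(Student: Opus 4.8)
The plan is to reproduce, in the present abstract setting, the Barannikov--Kontsevich construction of a formal Frobenius manifold out of a dGBV algebra (see \cite{BK98} and \cite{Man99}); the three hypotheses of the theorem are precisely what that construction requires, and the task is to indicate how each enters.

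\emph{Step 1: a Hodge-type decomposition.} First I would use hypothesis (3) together with $d\delta+\delta d=0$ to derive the $d\delta$-lemma for $(\mathscr{A},d,\delta)$, namely $\operatorname{im}d\cap\ker\delta=\ker d\cap\operatorname{im}\delta=\operatorname{im}d\delta$ on all of $\mathscr{A}$ --- this is the by-now standard consequence of having both inclusions $(\ker\delta,d)\hookrightarrow(\mathscr{A},d)$ and $(\ker d,\delta)\hookrightarrow(\mathscr{A},\delta)$ be quasi-isomorphisms (compare Theorem~\ref{symplectic-d-delta} and Theorem~\ref{formality-basic-complex}). I would then fix a graded subspace $\mathscr{H}\subseteq\ker d\cap\ker\delta$ mapping isomorphically onto $H(\mathscr{A},d)$, a projector $\Pi\colon\mathscr{A}\to\mathscr{H}$, and a homotopy operator $G$ so that every $d$-closed and $\delta$-closed element that is $d\delta$-exact acquires a functorial primitive. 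Note that $d$ and $\delta$ vanish identically on $\mathscr{H}$, and that $\dim\mathscr{H}<\infty$ by hypothesis (1).

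\emph{Step 2: solving the master equation.} Let $e_1,\dots,e_N$ be a homogeneous basis of $\mathscr{H}$ and $x=(x^1,\dots,x^N)$ dual formal coordinates of complementary degree. I would construct $\Gamma=\sum_{m\ge1}\Gamma_m\in\mathscr{A}\,\widehat\otimes\,k[[x]]$, with $\Gamma_m$ of polynomial degree $m$ in $x$ and $\Gamma_1=\sum_a x^a e_a$, solving
\[
d\Gamma+\tfrac12[\Gamma\bullet\Gamma]=0,\qquad \delta\Gamma=0,
\]
by induction on $m$ under the normalization $\Pi\Gamma=\Gamma_1$. At stage $m$ the obstruction is $R_m=\sum_{i+j=m,\,i,j\ge1}[\Gamma_i\bullet\Gamma_j]$; by the graded Jacobi identity for $[\bullet]$ and the equation to lower order, $R_m$ is $d$-closed, while since $\delta$ generates $[\bullet]$ and $\delta\Gamma=0$ to lower order, $R_m$ is $\delta$-exact. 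By the $d\delta$-lemma of Step~1, $R_m=d\delta\Lambda_m$ for some $\Lambda_m$, and $\Gamma_m:=-\tfrac12\delta G R_m$ solves the equation through order $m$ while preserving $\delta\Gamma=0$ and $\Pi\Gamma_m=0$. This inductive solvability --- that is, the vanishing of all obstruction classes --- is the heart of the argument, and is exactly where hypothesis (3) is indispensable.

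\emph{Step 3: the Frobenius data, and the main obstacle.} On the formal neighbourhood of $0$ in $\mathscr{H}$, hypothesis (2) provides the flat metric $g_{ab}=\int e_a\wedge e_b$, nondegenerate by assumption, while the class of $1$ gives the identity vector field. I would define the potential $\Phi\in k[[x]]$ through its third derivatives
\[
\partial_a\partial_b\partial_c\,\Phi=\int(\partial_a\Gamma)\wedge(\partial_b\Gamma)\wedge(\partial_c\Gamma),
\]
checking that one further differentiation yields a totally graded-symmetric tensor --- this uses the master equation of Step~2 and the two integration-by-parts identities for $\int$, which make $(\operatorname{im}d)\wedge(\ker d)$ and $(\operatorname{im}\delta)\wedge(\ker\delta)$ integrate to zero --- so that such a $\Phi$ exists, unique up to the quadratic part pinned down by $g$. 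The quantum product $e_a\circ_x e_b:=\Pi\bigl((\partial_a\Gamma)\wedge(\partial_b\Gamma)\bigr)$ then has structure constants $g^{cd}\,\partial_a\partial_b\partial_d\Phi$; commutativity is inherited from $\wedge$, at $x=0$ the product is the one induced on $H(\mathscr{A},d)$, and associativity is the WDVV equation for $\Phi$, which follows from the same master equation and integration-by-parts bookkeeping. I expect two points to require genuine work: the obstruction-vanishing in Step~2 (conceptually the crux, though short once the $d\delta$-lemma is in hand), and the multilinear computation establishing the symmetry of the third-derivative tensor and the WDVV relations in Step~3; both are carried out in detail in \cite{BK98} and \cite{Man99}, which the verification can follow.
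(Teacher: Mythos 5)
Your proposal is correct in outline and follows precisely the Barannikov--Kontsevich/Manin construction (master equation solved via the $d\delta$-lemma supplied by hypothesis (3), potential and WDVV via the nice integral of hypothesis (2), finiteness from hypothesis (1)). The paper itself offers no proof of this theorem but simply cites \cite{BK98} and \cite{Man99} for exactly this construction, so your route coincides with the one the paper relies on.
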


As an initial step, we first prove that the equivariant basic Cartan complex of a transversely symplectic manifold carries the structure of a dGBV algebra.

\begin{prop}\label{dGBV-algebra}
Suppose that there is a transverse action of a compact connected Lie group $G$ on a transversely symplectic manifold $(M,\mathcal{F},\omega)$.
Let $\delta$ be the differential on equivariant basic differential forms as introduced in Section \ref{eq-formality-ddelta-lemma},
and let $\wedge$ denote the wedge product.
Then the quadruple $(\Omega_{G,\emph{bas}},\wedge, \delta, d_G)$ is a dGBV algebra.
\end{prop}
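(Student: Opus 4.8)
The plan is to verify the defining axioms of a dGBV algebra for the quadruple $(\Omega_{G,\textmd{bas}},\wedge,\delta,d_G)$ one by one, reducing each to facts already established in the excerpt. First, $\Omega_{G,\textmd{bas}}$ is a supercommutative graded algebra under $\wedge$ with the total grading $\Omega_{G,\textmd{bas}}^k=\bigoplus_{i+j=k}\Omega_{G,\textmd{bas}}^{i,j}$, the identity being the constant function $1$; supercommutativity is inherited from the wedge product on $\Omega_{\textmd{bas}}$ (the polynomial variables have even degree, so they do not affect signs). Next, $\delta=1\otimes\delta$ lowers total degree by one and satisfies $\delta^2=0$ since $\delta^2=0$ on $\Omega_{\textmd{bas}}$; this is axiom (1) of a GBV algebra. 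For the $d$-side axioms of Definition \ref{def7.2}: $d_G$ raises total degree by one, and $d_G\delta=-\delta d_G$ is exactly Lemma \ref{delta-anti-commutes-dG}, giving axiom (2). That $d_G$ is a derivation with respect to $\wedge$ (axiom (1) of Definition \ref{def7.2}) follows because both $d=1\otimes d$ and $\partial$ are derivations: $d$ is the de Rham differential tensored with the identity, and $\partial(\alpha(\xi))=-\iota(\xi)\alpha(\xi)$ is a derivation since contraction by a fixed vector field is an odd derivation of the wedge product; hence $d_G=d+\partial$ is a derivation.

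The substantive point is axiom (2) of the GBV structure, the identity \eqref{key-identity}, which asserts that for each homogeneous $a$ the operator $[a\bullet -\,]$ is a derivation of degree $|a|+1$ of the wedge product. This is the statement that $\delta$ is a \emph{second-order} differential operator with respect to $\wedge$: the bracket $[a\bullet b]$ is precisely the failure of $\delta$ to be a first-order operator (a derivation), and the Poisson-type Leibniz rule \eqref{key-identity} is equivalent to $\delta$ having no third-order part. I would establish this by working in a local foliated chart, where after choosing a Darboux-type frame for the transverse symplectic form one can write $\delta=\sum [\iota(\partial/\partial p_i),d] - \sum[\iota(\partial/\partial q_i),d]$ on basic forms, exhibiting $\delta$ as a sum of commutators of the de Rham differential with contractions; since $d$ is first-order and each contraction is first-order (an odd derivation), such a commutator is second-order, and second-order operators automatically satisfy \eqref{key-identity}. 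Alternatively — and this is probably cleaner for the write-up — one invokes the analogous statement already known for the non-equivariant case (this is the content of symplectic Hodge theory as in \cite{Br88}, \cite{Y96}, \cite{L16}, where $(\Omega_{\textmd{bas}},\wedge,\delta)$ is a BV algebra), and observes that tensoring with $S(\mathfrak{g}^*)$ and passing to $G$-invariants does not disturb the identity, because $\delta=1\otimes\delta$ acts only on the form factor and \eqref{key-identity} is a pointwise algebraic identity in the form degrees.

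The main obstacle is therefore the verification that $\delta$ is a second-order operator on basic forms, equivalently the derivation property of $a\mapsto[a\bullet -\,]$; everything else is a routine transcription of Lemma \ref{delta-anti-commutes-dG} and of the elementary fact that $d$ and $\partial$ are derivations. I expect the proof to conclude by simply citing the known BV-algebra structure on $(\Omega(M,\mathcal{F}),\wedge,\delta)$ from the symplectic Hodge theory of \cite{L16} and remarking that the bracket $[\bullet]$ on $\Omega_{G,\textmd{bas}}$ is the $S(\mathfrak{g}^*)^G$-bilinear extension of the one on $\Omega_{\textmd{bas}}$, so that \eqref{key-identity} descends; combined with Lemma \ref{delta-anti-commutes-dG} and the derivation property of $d_G$, this shows $(\Omega_{G,\textmd{bas}},\wedge,\delta,d_G)$ is a dGBV algebra.
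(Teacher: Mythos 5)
Your bookkeeping of the easy axioms is fine and matches the paper's view of the situation: supercommutativity of $\wedge$, $\delta^2=0$, the derivation property of $d_G=d+\partial$, and $d_G\delta=-\delta d_G$ via Lemma \ref{delta-anti-commutes-dG} are all routine, and your observation that \eqref{key-identity} is a pointwise identity in the form factor (polynomial degrees being even, the signs are unaffected), so that it suffices to prove it for ordinary basic forms, is exactly the reduction the paper makes. So you have correctly isolated the crux: the seven-term identity \eqref{key-identity} for $\delta$ on $\Omega(M,\mathcal{F})$.

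It is at this crux that your argument has a genuine gap. The local formula you propose is wrong: by Cartan's formula $[\iota(X),d]=\iota(X)d+d\iota(X)=\mathcal{L}(X)$, so your sum of commutators of $d$ with single contractions is a sum of Lie derivatives, a first-order operator (in fact a derivation), not $\delta$; moreover the commutator of two first-order operators has order at most one, so the claim that such a commutator is second-order is false and the order-counting argument collapses. The correct local statement is $\delta=[\Lambda,d]$ with $\Lambda=\star L\star$ the \emph{second-order} operator (contraction with the transverse Poisson bivector), and even then you would need the Koszul-type fact that an odd operator of order at most two annihilating $1$ satisfies \eqref{key-identity} --- a fact nowhere established in this paper. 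Your fallback of citing \cite{Br88}, \cite{Y96}, \cite{L16} for the BV structure on $(\Omega(M,\mathcal{F}),\wedge,\delta)$ assumes precisely the point at issue: as used here, those references supply the commutator relations and the $d\delta$-lemma, not the seven-term identity for basic forms, and the paper does not treat it as known. Instead it proves \eqref{key-identity} directly: working in a foliated chart one reduces, by multiplicativity of the identity in $b$, to the two cases $b=f$ and $b=df$ with $f$ a basic function satisfying $df=\iota(X)\omega$ for a foliate field $X$, and then the identity follows from parts b) and c) of Lemma \ref{Leibniz-rule} (the key computations being $[a\bullet f]=(-1)^{|a|}(\delta(fa)-f\delta a)$ and $[a\bullet df]=\mathcal{L}(X)a$). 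To repair your proof, either carry out this computation, or make your local reduction honest by identifying basic forms in a foliated chart with forms on the local symplectic leaf space and invoking the classical symplectic (Koszul--Brylinski) identity there with a correct formula for $\delta$.
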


\begin{proof}
The only thing that requires a proof is  that (\ref{key-identity}) holds on equivariant basic differential forms.
To this end, it suffices to show that (\ref{key-identity}) holds for ordinary basic differential forms $a, b, c$ on a foliated coordinate neighborhood.
So without loss of generality, we may assume that $ b= f_0 df_1\wedge \cdots \wedge df_k$,
and that for each $\,0\leq i\leq k$, $f_i$ is a basic functions such that $df_i=\iota(X_i)\omega $ for some foliate vector field $X_i$.
However, it is easy to see that if $b_1,\cdots, b_s$ are basic forms such that for each $1\leq i\leq s$, (\ref{key-identity}) holds for $b=b_i$ and arbitrarily given basic forms $a$ and $c$, then (\ref{key-identity}) holds for $b=b_1\wedge\cdots \wedge b_s$ and arbitrarily given basic forms  $a$ and $c$.
Therefore it is enough to show that (\ref{key-identity}) is true in the following two cases.

\textbf{Case 1)} Assume that $b=f$ is a basic function such that $df=\iota(X)\omega$ for some foliate vector $X$.
Applying b) in Lemma \ref{Leibniz-rule}, we have
\[
\begin{split}
[a\bullet fc]&=(-1)^{\vert a\vert}\left( \delta(a\wedge fc)-\delta (a)\wedge fc-(-1)^{\vert a\vert}a\wedge \delta(fc)\right)
\\&=(-1)^{\vert a\vert}\left( f\delta(a\wedge c)-(\iota(X) a)\wedge c-\delta (a)\wedge fc-(-1)^{\vert a\vert} a\wedge f \delta c\right)
\\&= f[a\bullet c]-(-1)^{\vert a\vert} (\iota(X)a)\wedge c
\\&=f[a\bullet c]+(-1)^{\vert a\vert} (\delta(fa)-f\delta a)\wedge c
\\&=f[a\bullet c]+[a\bullet f]\wedge c.
\end{split}
\]

\textbf{Case 2)} Assume that $b=df$ for a basic function $f$ such that $df=\iota(X)\omega$ for some foliate vector $X$.
On the one hand, due to the identity c) in Lemma \ref{Leibniz-rule}, we get
\begin{equation}\label{step1}
\begin{split}
[a\bullet (df\wedge c)]&=(-1)^{\vert a\vert}\left( \delta( a\wedge df\wedge c)-\delta a\wedge df\wedge c-(-1)^{\vert a\vert}a\wedge \delta(df\wedge c)\right)\\&=\mathcal{L}(X)(a\wedge c)-df\wedge \delta (a\wedge c)-(-1)^{\vert a\vert}\delta a \wedge df\wedge c+ a\wedge df \wedge \delta c-a\wedge \mathcal{L}(X)c
\\&=(\mathcal{L}(X)a) \wedge c- df\wedge \delta (a\wedge c)+ df\wedge \delta a\wedge c+ a\wedge df\wedge \delta c
\\&=(\mathcal{L}(X)a)\wedge c - df\wedge\left (\delta (a\wedge c)-\delta a\wedge c-(-1)^{\vert a\vert}a\wedge \delta c\right)
\\&=(\mathcal{L}(X)a)\wedge c+(-1)^{\vert a\vert+1} df\wedge [a\bullet c].
\end{split}
\end{equation}
On the other hand, applying c) in Lemma \ref{Leibniz-rule} again, we have
 \begin{equation}\label{step2}
 \begin{split}
 [a\bullet df]
   &=(-1)^{\vert a\vert}\left( \delta(a\wedge df)-\delta a\wedge df -(-1)^{\vert a\vert} a\wedge\delta df\right)\\
   &=\delta( df\wedge a)-(-1)^{\vert a\vert}\delta a \wedge df +a\wedge d\delta f\\
   &=-df\wedge \delta a+\mathcal{L}(X)a+df\wedge \delta a\\
   &=\mathcal{L}(X)a
 \end{split}
 \end{equation}
It follows immediately from (\ref{step1}) and (\ref{step2}) that (\ref{key-identity}) holds in this case.
\end{proof}

\subsection{Formal Frobenius manifolds from dGBV-algrbras}
To show that there is a nice integral on the $dGBV$-algebra $(\Omega_{G,\textmd{bas}},\wedge, \delta, d_G)$,
we need the transverse integration theory developed on the space of basic forms on a taut Riemannian foliation (cf.  \cite[Chapter 7]{T97}, \cite{Ser85}).
Here we follow the method used in \cite{T97}, as it may be easier to describe for a general audience.

Recall that a foliation $\mathcal{F}$ on a smooth manifold $M$ is said to be \emph{Riemannian}, if there exists a Riemannian metric $g$ on $M$, called a \emph{bundle-like} metric for the foliation $\mathcal{F}$,
such that for any two foliate vector fields $Y$ and $Z$ on an open subset $U\subset M$ which are perpendicular to the leaves,
the function $g(Y,Z)$ is basic on $U$ (cf. \cite{R59a}).
From now on, we assume that $M$ is a closed oriented connected smooth manifold,
that $(M,\mathcal{F},\omega)$ is a transversely symplectic foliation of dimension $l$ and co-dimension $2n$ which satisfies the transverse hard Lefschetz property,
and that there is a Hamiltonian action \[G\times M\rightarrow M,\, (h,x)\mapsto L_h(x)\] of a compact connected Lie group $G$ on $M$.
In addition, we also assume that $\mathcal{F}$ is a Riemannian foliation with a bundlelike metric $g$.

Let $P$ be the integrable subbundle of $TM$ associated to the foliation $\mathcal{F}$ on $M$.
Observe that under our assumption $\mathcal{F}$ is transversely oriented.
It follows that $\mathcal{F}$ is also tangentially oriented.
That is to say that $P$ is an oriented vector bundle.
Fix an orientation on $P$, and define the \emph{characteristic form} $\chi_{\mathcal{F}}$ for the triple $(M, g,\mathcal{F})$ as follows (cf. \cite[Chapter 4]{T97}).
\begin{equation}\label{char-form}
\chi_{\mathcal{F}}(Y_{1},\cdots,Y_{l})=\det\bigl(g(Y_{i},E_{j})\bigr),
\end{equation}
where $Y_{1},\cdots,Y_{l}\in T_{x}M$, and $(E_{1},\cdots,E_{l})$ is an oriented orthonormal frame of $P_x$.
Clearly, when the orientation on $P$ is fixed, the definition of $\chi_{\mathcal{F}}$ depends only on the choice of a bundlelike metric.
However, by the transverse hard Lefschetz property, $H^{2n}(M,\mathcal{F})\cong H^0(M,\mathcal{F})\cong \mathbb{R}$, which implies that the Riemannian foliation $(M,\mathcal{F})$ is taut (cf. \cite[Theorem 1.4.6]{PAW09}).
Thus as explained in \cite[Chapter 7]{T97}  and \cite[Formula 4.26]{T97}, we can choose a bundlelike metric $g$ such that the corresponding characteristic form
$\chi_{\mathcal{F}}$ satisfies
\begin{equation}\label{taut-consequence}
\iota(X_1)\cdots \iota(X_l) d\chi_{\mathcal{F}}=0\,\,\,\forall\, X_1,\cdots, X_l\in C^{\infty}(P).
\end{equation}

Since the action of $G$ preserves the foliation $\mathcal{F}$, it is easy to check that $\forall\, h\in G$, the characteristic form with respect to the  pullback metric $L_h^* g$ is $L_h^*\chi_{\mathcal{F}}$.
A straightforward check shows that $L_h^*\chi_{\mathcal{F}}$  also satisfies (\ref{taut-consequence}).
So averaging the bundlelike metric $g$ over the compact Lie group $G$ if necessary,
we may assume that the characteristic form $\chi_{\mathcal{F}}$ with respect to the bundlelike metric $g$ is not only $G$-invariant,
but also satisfies (\ref{taut-consequence}).
In particular, $\chi_{\mathcal{F}}$ can be regarded as an equivariant differential form.
Using the usual equivariant integration (cf. \cite{GS99}), we define a $S(\mathfrak{g^{*}})^{G}$-linear operator as follows.
\begin{equation}\label{integral-operator}
  \int: \Omega_{G,\textmd{bas}}\rightarrow  S(\mathfrak{g^{*}})^{G},\quad
  \alpha \mapsto \int_{M}\alpha\wedge {\chi}_{\mathcal{F}}.
\end{equation}

\begin{lem}\label{integral-identity-on-dGBV}
 $\forall\,\alpha\in\Omega^{s}_{G,\emph{bas}}$, $\forall\,\beta\in\Omega^{t}_{G,\emph{bas}}$, we have that
\begin{itemize}
\item [a)]
\begin{equation}\label{basic-integral-dG}
\int(d_{G}\alpha)\wedge \beta=(-1)^{s+1}\int \alpha\wedge d_{G}\beta,
\end{equation}
\item [b)]
\begin{equation}\label{basic-integral-delta}
\int(\delta \alpha)\wedge \beta=(-1)^{s}\int \alpha\wedge\delta \beta.
\end{equation}
\end{itemize}
\end{lem}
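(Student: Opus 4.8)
The plan is to reduce both identities to the ordinary Stokes-type formula for equivariant integration over $M$, using the crucial property \eqref{taut-consequence} of the characteristic form $\chi_{\mathcal{F}}$ to control the terms that arise when $d_G$ or $\delta$ is moved off $\chi_{\mathcal{F}}$. The key point is that $\chi_{\mathcal{F}}$ is itself a $G$-invariant equivariant form, so $\alpha\wedge\chi_{\mathcal{F}}\in\Omega_G(M)$ (non-basic!) and the usual equivariant integration $\int_M\colon\Omega_G(M)\to S(\mathfrak g^*)^G$ applies. For part a), I would write $d_G(\alpha\wedge\chi_{\mathcal{F}})=(d_G\alpha)\wedge\chi_{\mathcal{F}}+(-1)^s\alpha\wedge d_G\chi_{\mathcal{F}}$ and similarly expand $d_G(\alpha\wedge\beta\wedge\chi_{\mathcal{F}})$. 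Since $\int_M$ annihilates $d_G$-exact equivariant forms (the equivariant Stokes theorem, see \cite{GS99}), the left side $\int(d_G\alpha)\wedge\beta=\int_M(d_G\alpha)\wedge\beta\wedge\chi_{\mathcal{F}}$ differs from $(-1)^{s+1}\int\alpha\wedge d_G\beta$ only by the term $\pm\int_M\alpha\wedge\beta\wedge d_G\chi_{\mathcal{F}}$, so it suffices to show this term vanishes.

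The heart of the matter — and the step I expect to be the main obstacle — is showing that $\int_M\alpha\wedge\beta\wedge d_G\chi_{\mathcal{F}}=0$ for basic $\alpha,\beta$. Here one writes $d_G\chi_{\mathcal{F}}=d\chi_{\mathcal{F}}-\iota(\xi)\chi_{\mathcal{F}}$ as a function of $\xi\in\mathfrak g$. The wedge $\alpha\wedge\beta\wedge(d\chi_{\mathcal{F}}-\iota(\xi)\chi_{\mathcal{F}})$ is a form of degree $\deg\alpha+\deg\beta+(l+1)$ in $\dim M=l+2n$ variables, so it can only contribute to the top-degree piece when $\deg\alpha+\deg\beta=2n-1$; but $\alpha$ and $\beta$ are basic, hence their wedge has transverse degree at most $2n$ and tangential degree $0$, so $\alpha\wedge\beta\wedge d\chi_{\mathcal{F}}$ and $\alpha\wedge\beta\wedge\iota(\xi)\chi_{\mathcal{F}}$ vanish pointwise by property \eqref{taut-consequence} together with the fact that $\iota(\xi)\chi_{\mathcal{F}}$ is annihilated by enough contractions (the fundamental vector field $\xi_M$ is foliate, and one contracts against a full tangential frame). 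Concretely: evaluating $\alpha\wedge\beta\wedge d\chi_{\mathcal{F}}$ against any $l+2n$ tangent vectors, one may take $l$ of them in $C^\infty(P)$ since $\alpha,\beta$ already kill all purely tangential contractions, and then \eqref{taut-consequence} forces the result to be zero; the $\iota(\xi)\chi_{\mathcal{F}}$ term is handled the same way after noting $d_G$-invariance of $\chi_{\mathcal{F}}$ gives $\mathcal L(\xi_M)\chi_{\mathcal F}=0$, hence $d\iota(\xi_M)\chi_{\mathcal F}=-\iota(\xi_M)d\chi_{\mathcal F}$, which is again annihilated by tangential contractions.

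For part b), I would argue entirely analogously but more cheaply, since $\delta$ acts trivially on the $S(\mathfrak g^*)$-factor and lowers degree: using $\delta(a\wedge b\wedge c)$ is \emph{not} a derivation, so instead I would invoke the definition $\delta\alpha=(-1)^{|\alpha|+1}\star d\star\alpha$ on basic forms together with the adjointness relation \eqref{star-adjoint-identity}, or more directly reduce to \eqref{basic-integral-dG} via the symplectic $\star$-operator: since $\int\delta(\cdot)\wedge(\cdot)$ only involves the top basic degree $2n$, one checks the sign by a local computation in a foliated chart using Lemma \ref{Leibniz-rule}(b),(c) and the fact that $\chi_{\mathcal F}$ satisfies \eqref{taut-consequence}, so that $\delta$-integration by parts produces no boundary contribution from $\chi_{\mathcal{F}}$. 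The sign $(-1)^s$ versus $(-1)^{s+1}$ is exactly the discrepancy between the transpose $\delta$ (defined with the $(-1)^{p+1}$ twist) and the derivation $d$, and matches the conventions in \eqref{integral}.
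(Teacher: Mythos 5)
Your plan is correct and follows essentially the same route as the paper: expand the Leibniz rule for $d_G(\alpha\wedge\beta\wedge\chi_{\mathcal F})$ and integrate (the paper splits $d_G=d+\partial$, using ordinary Stokes for the $d$-part and a degree count for the $\partial$-part, which is equivalent to your equivariant Stokes argument), kill $\int_M\alpha\wedge\beta\wedge d\chi_{\mathcal F}$ via \eqref{taut-consequence} together with horizontality of $\alpha\wedge\beta$, and deduce b) from a) through $\delta=\pm\star d\star$ and \eqref{star-adjoint-identity}. One small remark: your detour through $\mathcal{L}(\xi_M)\chi_{\mathcal F}=0$ for the $\iota(\xi)\chi_{\mathcal F}$ term is unnecessary, since $\alpha\wedge\beta$ is basic of form degree at most $2n$, so that term has degree at most $l+2n-1<\dim M$ and its integral vanishes by degree reasons alone, which is exactly the paper's ``simple degree counting''.
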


\begin{proof}
We first prove a preliminary result that for any two ordinary basic differential forms $\alpha \in \Omega^s(M,\mathcal{F})$ and
$\beta\in \Omega^t(M,\mathcal{F})$ the following identity holds.
\begin{equation}\label{coho-pairing-preliminary}
\int_M(d\alpha)\wedge \beta\wedge \chi_{\mathcal{F}}=(-1)^{s+1}\int_M \alpha\wedge d\beta\wedge \chi_{\mathcal{F}},
\end{equation}
By the Leibniz rule,
\[
d(\alpha\wedge\beta\wedge {\chi}_{\mathcal{F}})=
d\alpha\wedge\beta\wedge \chi_{\mathcal{F}}
+(-1)^{s}\alpha\wedge(d\beta)\wedge \chi_{\mathcal{F}}
+(-1)^{s+t}\alpha\wedge\beta\wedge d \chi_{\mathcal{F}}.
\]
Since
\[
  \int_{M}d(\alpha\wedge\beta\wedge{\chi}_{\mathcal{F}})=0,
\]
to prove (\ref{coho-pairing-preliminary}) it suffices to show that
\begin{equation}\label{equ7.3}
\int_M\,\alpha\wedge\beta\wedge d\chi_{\mathcal{F}}=0.
\end{equation}
Observe that $\chi_{\mathcal{F}}$ is of degree $l$, we may assume that $s+t=2n-1$,
for otherwise (\ref{equ7.3}) holds for degree reasons.
Next recall that by our choice of the bundle-like metric, the characteristic form $\chi_{\mathcal{F}}$ has the property that for any vector fields
$X_{1},\cdot\cdot\cdot,X_{l}$ tangent to the leaves of $\mathcal{F}$,
$
\iota(X_{1})\cdots\iota(X_{l})d\chi_{\mathcal{F}}=0.
$
Since $\alpha$ and $\beta$ are basic, this would imply that  $\alpha\wedge\beta\wedge d\chi_{\mathcal{F}}=0$,
from which  (\ref{coho-pairing-preliminary}) follows as an immediate consequence.

Since $d$ does not act on the polynomial part of an equivariant basic form, (\ref{coho-pairing-preliminary}) also holds for equivariant basic forms.  On the other hand, for each $\alpha\in \Omega^s_{G}(M,\mathcal{F})$ and $\beta\in\Omega^t_{G}(M,\mathcal{F})$,
a simple degree counting shows that
\begin{equation}\label{degree-counting}
\int_M\partial \alpha\wedge\beta\wedge d\chi_{\mathcal{F}}=\int_M\alpha\wedge \partial \beta\wedge d\chi_{\mathcal{F}}=0.
\end{equation}
Combing (\ref{coho-pairing-preliminary}) and (\ref{degree-counting}) we get that (\ref{basic-integral-dG}) holds.

To prove the assertion b), it suffices to show that for any ordinary basic forms $\alpha\in \Omega^s(M,\mathcal{F})$
and $\beta \in \Omega^t(M,\mathcal{F})$,
$$
\int_M(\delta\alpha)\wedge\beta\wedge \chi_{\mathcal{F}}=(-1)^{s}\int\alpha\wedge(\delta\beta)\wedge \chi_{\mathcal{F}}.
$$
Without loss of generality, we may assume that $s+t=2n+1$.
Using (\ref{star-adjoint-identity}) and (\ref{coho-pairing-preliminary}),
we have
\begin{equation*}
\begin{split}
\int_M(\delta\alpha)\wedge\beta\wedge \chi_{\mathcal{F}}&=(-1)^{s+1}\int_M (\star d\star \alpha) \wedge\beta\wedge \chi_{\mathcal{F}}\\&=(-1)^{s+1}\int_M(d\star\alpha)\wedge \star \beta \wedge \chi_{\mathcal{F}}
\\&=\int_M (\star \alpha)\wedge d\star \beta\wedge \chi_{\mathcal{F}}
\\&=(-1)^{s}\int_M \alpha \wedge \delta\beta \wedge \chi_{\mathcal{F}}.
\end{split}
\end{equation*}
This completes the proof.
\end{proof}

Note that $S(\mathfrak{g}^*)^G$ is an integral domain.
Let  $\mathbb{F}=\{\frac{f}{g}\,\vert\, f, g\in S(\mathfrak{g}^*)^G\}$ be the fractional field of $S(\mathfrak{g}^*)^G$.
Define
\[
\widetilde{\Omega}_{G,\textmd{bas}}=\Omega_{G,\textmd{bas}}\otimes_{S(\mathfrak{g}^*)^G}\mathbb{F}.
\]
Extend $d_G, \wedge$ and $\delta$ to  $\widetilde{\Omega}_{G,\textmd{bas}}$, and define
\begin{equation}\label{normalized-eq-basic-coho}
\widetilde{H}_{G}(M,\mathcal{F})= H(\widetilde{\Omega}_{G,\textmd{bas}}, d_G).
\end{equation}
As an direct consequence of Theorem \ref{eq-formality}, we have
\[
\widetilde{H}_{G}(M,\mathcal{F})= H_{G}(M,\mathcal{F})\otimes_{S(\mathfrak{g}^*)^G}\mathbb{F}.
\]

Applying Proposition \ref{dGBV-algebra}, we see that $(\widetilde{\Omega}_{G,\textmd{bas}}, \delta, \wedge, d_G)$ is a dGBV-algebra over $\mathbb{F}$.
Moreover, the operator defined in (\ref{integral-operator}) naturally extends to a $\mathbb{F}$-linear operator
\begin{equation}\label{integral-on-dGBV}
\int: \widetilde{\Omega}_{G,\textmd{bas}}\rightarrow \mathbb{F}.
\end{equation}
 Clearly, Lemma \ref{integral-identity-on-dGBV} implies that the above operator (\ref{integral-on-dGBV}) defines an integral on the dGBV algebra $(\widetilde{\Omega}_{G,\textmd{bas}},\wedge,\delta,d_G)$.
To show that this integral is also nice, we need the following result on the basic Poincar\'{e} duality.
\begin{thm}[{\cite[Corollary 7.58]{T97}}]\label{basic-Poincare-duality}
Let $\mathcal{F}$ be a taut and transversally oriented Riemannian foliation on a closed oriented manifold $M$.
The the pairing
$$
\alpha\otimes\beta\mapsto\int_{M}\alpha\wedge\beta\wedge\chi_{\mathcal{F}}
$$
induces a non-degenerate pairing
$$
H^{r}(M,\mathcal{F})\times H^{q-r}(M,\mathcal{F})\rightarrow\mathbb{R}
$$
on finite-dimensional vector spaces,
where $q=\emph{codim}\,\mathcal{F}$.
\end{thm}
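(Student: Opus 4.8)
The plan is to derive the statement from transverse Hodge theory on the basic de Rham complex, with tautness entering only through the choice of bundle-like metric. First I would recall that for a Riemannian foliation on a closed manifold the basic cohomology $H^{*}(M,\mathcal{F})$ is finite-dimensional (cf. \cite{KSH85}), so that the spaces appearing in the statement make sense; and I would keep fixed the bundle-like metric chosen in the paragraph preceding \eqref{taut-consequence}, whose characteristic form $\chi_{\mathcal{F}}$ satisfies $\iota(X_{1})\cdots\iota(X_{l})\,d\chi_{\mathcal{F}}=0$ for all vector fields tangent to the leaves. By the Rummler--Sullivan criterion this property is equivalent to the leaves being minimal submanifolds for $g$, and it is precisely this that makes the transverse Hodge theory below behave like the Hodge theory of a closed oriented $q$-manifold.

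Next I would introduce the transverse star operator $\bar{\star}:\Omega^{r}(M,\mathcal{F})\to\Omega^{q-r}(M,\mathcal{F})$ attached to the transverse metric and the transverse orientation; it satisfies $\bar{\star}^{2}=(-1)^{r(q-r)}\mathrm{id}$ and, pointwise, $\alpha\wedge\bar{\star}\alpha\wedge\chi_{\mathcal{F}}=\abs{\alpha}^{2}\,\mathrm{vol}_{M}$. Because the leaves are minimal, the mean-curvature correction term drops out and the transverse codifferential is simply $\delta_{b}=(-1)^{q(r+1)+1}\bar{\star}\,d\,\bar{\star}$ on $\Omega^{r}(M,\mathcal{F})$; hence $\bar{\star}$ intertwines $d$ with $\pm\delta_{b}$ and commutes with the basic Laplacian $\Delta_{b}=d\delta_{b}+\delta_{b}d$. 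The transverse Hodge decomposition --- $\Delta_{b}$ is transversally elliptic and, restricted to the complex of basic forms, reduces to a genuine elliptic problem --- then yields $\Omega^{r}(M,\mathcal{F})=\mathcal{H}^{r}_{b}\oplus d\,\Omega^{r-1}(M,\mathcal{F})\oplus\delta_{b}\,\Omega^{r+1}(M,\mathcal{F})$ with $\mathcal{H}^{r}_{b}=\ker\Delta_{b}$ finite-dimensional and the harmonic projection inducing an isomorphism $\mathcal{H}^{r}_{b}\xrightarrow{\,\,\cong\,\,}H^{r}(M,\mathcal{F})$.

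With these tools the duality is immediate. The bilinear form $([\alpha],[\beta])\mapsto\int_{M}\alpha\wedge\beta\wedge\chi_{\mathcal{F}}$ descends to basic cohomology by exactly the computation \eqref{equ7.3}--\eqref{coho-pairing-preliminary} already carried out above, where \eqref{taut-consequence} kills the term $\int_{M}\alpha\wedge\beta\wedge d\chi_{\mathcal{F}}$. Now $\bar{\star}$ restricts to an isomorphism $\mathcal{H}^{r}_{b}\to\mathcal{H}^{q-r}_{b}$, hence induces an isomorphism $H^{r}(M,\mathcal{F})\to H^{q-r}(M,\mathcal{F})$; and for a nonzero harmonic representative $\alpha$ of a class in $H^{r}(M,\mathcal{F})$ one has $\int_{M}\alpha\wedge\bar{\star}\alpha\wedge\chi_{\mathcal{F}}=\int_{M}\abs{\alpha}^{2}\,\mathrm{vol}_{M}>0$, the positivity following from the pointwise identity together with the fact that a basic $\Delta_{b}$-harmonic form vanishing on an open set vanishes identically. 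Hence the map $H^{r}(M,\mathcal{F})\to H^{q-r}(M,\mathcal{F})^{*}$ is injective; by finite-dimensionality and the symmetric role of $r$ and $q-r$ it is an isomorphism, so the pairing is non-degenerate.

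The genuinely delicate points are the two analytic inputs rather than the duality manipulation: (i) establishing the Hodge decomposition and finite-dimensionality of $\mathcal{H}^{r}_{b}$ for the operator $\Delta_{b}$, which is only transversally elliptic on $M$ and must be handled on a suitable completion of $\Omega^{*}(M,\mathcal{F})$ (this is the El Kacimi-Alaoui theory, expounded in \cite{T97}); and (ii) the identity $\delta_{b}=\pm\bar{\star}\,d\,\bar{\star}$, which holds \emph{only} for a bundle-like metric with minimal leaves, i.e. under \eqref{taut-consequence}. Step (ii) is exactly where tautness is indispensable: without it $\bar{\star}$ need not carry harmonic forms to harmonic forms, and one recovers only a twisted duality $H^{r}(M,\mathcal{F})\cong H^{q-r}_{\kappa}(M,\mathcal{F})$ with the mean-curvature class $[\kappa]$ obstructing honest Poincar\'e duality.
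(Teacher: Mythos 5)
The paper offers no proof of this statement: it is quoted verbatim from Tondeur \cite[Corollary 7.58]{T97}, so there is nothing internal to compare against. Your argument is a correct reconstruction of the standard proof in that reference: finite-dimensionality from \cite{KSH85}, the Rummler--Sullivan reformulation of tautness via \eqref{taut-consequence}, the basic Hodge decomposition of El Kacimi-Alaoui, and the fact that minimality of the leaves makes $\bar{\star}$ intertwine $d$ with $\pm\delta_{b}$ and hence preserve harmonicity, which is exactly where tautness is needed. The only superfluous step is the appeal to unique continuation for the positivity of $\int_{M}\alpha\wedge\bar{\star}\alpha\wedge\chi_{\mathcal{F}}$; for a nonzero harmonic form this integral is already positive because the integrand is pointwise $\abs{\alpha}^{2}$ times the volume form.
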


\begin{lem} \label{non-degeneracy}
The integral operator defined in (\ref{integral-on-dGBV}) is nice, i.e.,
it induces a $\mathbb{F}$-bi-linear non-degenerate pairing
\[
\widetilde{H}_{G}^*(M,\mathcal{F})\times \widetilde{H}^*_{G}(M,\mathcal{F})\rightarrow \mathbb{F}.
\]
\end{lem}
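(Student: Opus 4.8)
The plan is to combine the equivariant formality isomorphism of Theorem \ref{eq-formality} with the basic Poincar\'e duality of Theorem \ref{basic-Poincare-duality}, via a bidegree bookkeeping. By Lemma \ref{integral-identity-on-dGBV}(a) the expression $([\alpha],[\beta])\mapsto\int\alpha\wedge\beta$ already descends to an $S(\mathfrak{g}^{*})^{G}$-bilinear pairing on $H_{G}(M,\mathcal{F})$, which then extends $\mathbb{F}$-bilinearly to $\widetilde{H}_{G}(M,\mathcal{F})=H_{G}(M,\mathcal{F})\otimes_{S(\mathfrak{g}^{*})^{G}}\mathbb{F}$. Since $\widetilde{H}_{G}(M,\mathcal{F})$ is the localization of $H_{G}(M,\mathcal{F})$ at $S(\mathfrak{g}^{*})^{G}\setminus\{0\}$, and since $H(M,\mathcal{F})$ is finite-dimensional (so that $H_{G}(M,\mathcal{F})\cong S(\mathfrak{g}^{*})^{G}\otimes H(M,\mathcal{F})$ is a free $S(\mathfrak{g}^{*})^{G}$-module of finite rank), it suffices to prove the stronger statement that the Gram matrix of $\int$ with respect to some homogeneous $S(\mathfrak{g}^{*})^{G}$-basis of $H_{G}(M,\mathcal{F})$ has invertible determinant.

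For the basis I would fix a homogeneous $\mathbb{R}$-basis $\{\bar e_{1},\dots,\bar e_{r}\}$ of $H(M,\mathcal{F})$, with $d_{a}=\deg\bar e_{a}$, and set $e_{a}=s(\bar e_{a})$ using the canonical section $s$ of \eqref{canonical-section}; since $s$ is induced by the isomorphism of Theorem \ref{eq-formality}, $\{e_{a}\}$ is a homogeneous $S(\mathfrak{g}^{*})^{G}$-basis of $H_{G}(M,\mathcal{F})$, and $p(e_{a})=\bar e_{a}$ because $s$ is a section of $p$. Represent $e_{a}$ by a $d_{G}$-cocycle $\alpha_{a}=\sum_{i\ge 0}\alpha_{a}^{(i)}$, where $\alpha_{a}^{(i)}\in[S^{i}(\mathfrak{g}^{*})\otimes\Omega^{d_{a}-2i}(M,\mathcal{F})]^{G}$ is the polynomial-degree-$i$ component; since $\partial$ raises the polynomial degree, $d\alpha_{a}^{(0)}=0$ and $[\alpha_{a}^{(0)}]=p(e_{a})=\bar e_{a}$ in $H(M,\mathcal{F})$.

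The key step is the bidegree count. Because $\chi_{\mathcal{F}}$ has form degree $\dim M-2n$, in
\[
A_{ab}:=\int e_{a}\wedge e_{b}=\sum_{i,j\ge 0}\int_{M}\alpha_{a}^{(i)}\wedge\alpha_{b}^{(j)}\wedge\chi_{\mathcal{F}}
\]
the $(i,j)$ summand vanishes unless $(d_{a}-2i)+(d_{b}-2j)=2n$, so $A_{ab}$ is homogeneous of polynomial degree $\tfrac12(d_{a}+d_{b}-2n)$ and vanishes when $d_{a}+d_{b}<2n$. Hence each monomial $\operatorname{sgn}(\sigma)\prod_{a}A_{a\sigma(a)}$ in $\det A$ is homogeneous of degree $\tfrac12\sum_{a}(d_{a}+d_{\sigma(a)}-2n)=\sum_{a}d_{a}-nr$. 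Basic Poincar\'e duality (Theorem \ref{basic-Poincare-duality}) gives $\dim H^{j}(M,\mathcal{F})=\dim H^{2n-j}(M,\mathcal{F})$, hence $\sum_{a}d_{a}=\sum_{j}j\dim H^{j}(M,\mathcal{F})=nr$, so every such monomial lies in $S^{0}(\mathfrak{g}^{*})^{G}=\mathbb{R}$. As the factors $A_{a\sigma(a)}$ have nonnegative degree and multiply to a degree-$0$ element, in any nonzero monomial each factor has degree $0$, forcing $d_{a}+d_{\sigma(a)}=2n$ and $A_{a\sigma(a)}=\int_{M}\alpha_{a}^{(0)}\wedge\alpha_{\sigma(a)}^{(0)}\wedge\chi_{\mathcal{F}}=\langle\bar e_{a},\bar e_{\sigma(a)}\rangle$, the basic Poincar\'e pairing of the leading terms. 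Therefore $\det A=\det\big(\langle\bar e_{a},\bar e_{b}\rangle\big)_{a,b}$, which is nonzero since the pairing of Theorem \ref{basic-Poincare-duality} is non-degenerate on $H(M,\mathcal{F})$. Consequently $A$ is invertible over $S(\mathfrak{g}^{*})^{G}$, so the pairing on $H_{G}(M,\mathcal{F})$ is perfect and its localization, the pairing on $\widetilde{H}_{G}(M,\mathcal{F})$, is $\mathbb{F}$-bilinear and non-degenerate; i.e.\ the integral \eqref{integral-on-dGBV} is nice.

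The only genuine content beyond quoting Theorems \ref{eq-formality} and \ref{basic-Poincare-duality} is the bidegree bookkeeping that pins $\det A$ to degree $0$ and identifies it with the basic Poincar\'e determinant, the crucial point being the cancellation $\sum_{a}d_{a}=nr$, itself a consequence of basic Poincar\'e duality. The remaining ingredients are routine: that $s$ is a section of $p$ (the corollary above), and that the leading-term integral $\int_{M}\alpha_{a}^{(0)}\wedge\alpha_{b}^{(0)}\wedge\chi_{\mathcal{F}}$ depends only on $[\alpha_{a}^{(0)}]$ and $[\alpha_{b}^{(0)}]$, which is part of the content of Theorem \ref{basic-Poincare-duality} (compare the identity \eqref{coho-pairing-preliminary} from the proof of Lemma \ref{integral-identity-on-dGBV}).
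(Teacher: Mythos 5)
Your proposal is correct, and it proves the lemma by a route that differs in organization from the paper's, though the key ingredients (the canonical section $s$ of \eqref{canonical-section} coming from Theorem \ref{eq-formality}, a bidegree count in the integral against $\chi_{\mathcal{F}}$, and basic Poincar\'e duality, Theorem \ref{basic-Poincare-duality}) are the same. The paper argues by contradiction on an element of the radical: it expands an arbitrary class $[\alpha]$ with $\int\alpha\wedge\beta\wedge\chi_{\mathcal{F}}=0$ for all $[\beta]$ as $\sum_i f_i\otimes s([\gamma_i])$ over a real basis $\{f_i\}$ of $S(\mathfrak{g}^*)^G$, orders the $\gamma_i$ by degree, pairs against $s([\zeta])$ with $[\zeta]\in H^{2n-k_1}(M,\mathcal{F})$, and uses the same degree counting to see that only the top-degree term survives as a real constant, whence $[\gamma_1]=0$ by Theorem \ref{basic-Poincare-duality}; iterating kills all $\gamma_i$. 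Your version instead fixes the $S(\mathfrak{g}^*)^G$-basis $\{s(\bar e_a)\}$ and pins down the Gram determinant: the homogeneity computation $\deg A_{ab}=\tfrac12(d_a+d_b-2n)$ together with $\sum_a d_a=nr$ (itself Poincar\'e duality) forces $\det A$ to equal the classical basic Poincar\'e determinant. This buys a slightly stronger conclusion than the paper states: $\det A$ is a nonzero real number, hence a unit, so the pairing is already perfect over $S(\mathfrak{g}^*)^G$ before localizing, and nondegeneracy over $\mathbb{F}$ follows immediately; it also makes explicit the reduction (via Lemma \ref{integral-identity-on-dGBV} and \eqref{coho-pairing-preliminary}) that the paper leaves implicit when it passes from $\widetilde{H}_G(M,\mathcal{F})$ to classes in $H_G(M,\mathcal{F})$. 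The only point worth spelling out in your write-up is that the nondegeneracy of the total pairing on $\bigoplus_j H^j(M,\mathcal{F})$ (hence the nonvanishing of $\det\bigl(\langle\bar e_a,\bar e_b\rangle\bigr)$ for a basis not adapted to complementary degrees) follows from the degreewise statement of Theorem \ref{basic-Poincare-duality} by the usual top-component argument; this is routine.
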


\begin{proof}
Let $[\alpha]$ be an arbitrary class in $ H_{G}(M,\mathcal{F})$ such that
\[
\int_M \alpha \wedge \beta \wedge \chi_{\mathcal{F}}=0, \,\,\,\textmd{for each}\, [\beta]\in H_{G}(M,\mathcal{F}).
\]
To prove Lemma \ref{non-degeneracy}, it suffices to show that $[\alpha]$ has to vanish.

Let $\{f_1,\cdots ,f_k\}$ be a basis of the real vector space $(S\mathfrak{g}^*)^G$.
By Theorem \ref{eq-formality}, there exist finitely many  cohomology classes $[\gamma_i]$'s in $H(M,\mathcal{F})$ such that
\[
 [\alpha]= \displaystyle \sum_i f_i\otimes s([\gamma_i]).
\]
Here $s: H(M,\mathcal{F})\rightarrow H_{G}(M,\mathcal{F})$ is the canonical section introduced in (\ref{canonical-section}).
Let $k_i$ be the degree of the basic form $\gamma_i$.
After a reshuffling of the index, we may assume that $k_1\geq k_2\geq \cdots$.
Then for any $[\zeta] \in H^{2n-k_1}(M,\mathcal{F})$, we have that
\[
\displaystyle \sum_if_i\otimes \left( \int_M s([\gamma_i]) \wedge s([\zeta])\wedge\chi_{\mathcal{F}}\right)=0,\]
which implies that
\[ \int_{M}s([\gamma_{1}])\wedge s([\zeta])\wedge\chi_{\mathcal{F}}=0.
\]

It then follows from a simple counting of degrees that $\int_M\gamma_1\wedge \zeta \wedge \chi_{\mathcal{F}}=0$.
Since $[\zeta]\in H^{2n-k_1}(M,\mathcal{F})$ is arbitrarily chosen, by Theorem \ref{basic-Poincare-duality} we have that $[\gamma_1]=0$.
Thus $s([\gamma_1])=0$.
Repeating this argument, we see that $[\gamma_i]=0$ for all $i$.
It follows that $[\alpha]$ must be zero.
\end{proof}

We are ready to state the main result of this section.
\begin{thm}\label{construction-Frobenius-manifolds-eq-case}
Assume that $(\mathcal{F},\omega)$ is a transversely symplectic foliation on a closed oriented smooth manifold $M$ that satisfies the transverse hard Lefschetz property,
and that a compact connected Lie group $G$ acts on $(M,\mathcal{F},\omega)$ in a Hamiltonian fashion.
If $\mathcal{F}$ is also a Riemannian foliation, then there is a canonical formal Frobenius manifold structure on the equivariant basic cohomology $\widetilde{H}_{G}(M,\mathcal{F})$ as defined in (\ref{normalized-eq-basic-coho}).
\end{thm}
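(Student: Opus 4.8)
The plan is to verify that the dGBV algebra $(\widetilde{\Omega}_{G,\textmd{bas}},\wedge,\delta,d_G)$ meets the three hypotheses of Theorem \ref{thm7.3}, and then invoke that theorem verbatim. Proposition \ref{dGBV-algebra} already gives us that the quadruple is a dGBV algebra over $S(\mathfrak{g}^*)^G$; extending scalars along $S(\mathfrak{g}^*)^G\hookrightarrow\mathbb{F}$ preserves all the defining identities, so $(\widetilde{\Omega}_{G,\textmd{bas}},\wedge,\delta,d_G)$ is a dGBV algebra over the field $\mathbb{F}$, which is the ground field required by the Barannikov--Kontsevich machine. So the substance of the proof is checking conditions (1)--(3) of Theorem \ref{thm7.3} in this setting.

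\textbf{Condition (1): finite-dimensionality.} By Theorem \ref{eq-formality} we have $H_G(M,\mathcal{F})\cong S(\mathfrak{g}^*)^G\otimes H(M,\mathcal{F})$ as $S(\mathfrak{g}^*)^G$-modules, hence $\widetilde{H}_G(M,\mathcal{F})\cong H_G(M,\mathcal{F})\otimes_{S(\mathfrak{g}^*)^G}\mathbb{F}\cong \mathbb{F}\otimes_{\mathbb{R}} H(M,\mathcal{F})$, which is a finite-dimensional $\mathbb{F}$-vector space because $\mathcal{F}$ is a Riemannian foliation on a closed manifold and hence $H(M,\mathcal{F})$ is finite-dimensional (the fact recalled in Section 2, using \cite{KSH85}). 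Here one should be slightly careful to note that tautness — needed to even run the integration theory — follows from the transverse hard Lefschetz property via $H^{2n}(M,\mathcal{F})\cong H^0(M,\mathcal{F})\cong\mathbb{R}$, exactly as argued just before \eqref{taut-consequence}.

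\textbf{Condition (2): a nice integral.} This is precisely the content of Lemma \ref{non-degeneracy}: the $\mathbb{F}$-linear operator $\int$ of \eqref{integral-on-dGBV} satisfies the two adjunction identities of \eqref{integral} by Lemma \ref{integral-identity-on-dGBV}, and the induced bilinear pairing on $\widetilde{H}_G(M,\mathcal{F})$ is non-degenerate by Lemma \ref{non-degeneracy}. So condition (2) is already done.

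\textbf{Condition (3): the two inclusions are quasi-isomorphisms.} We must show that $(\ker\delta,d_G)\hookrightarrow(\widetilde{\Omega}_{G,\textmd{bas}},d_G)$ and $(\ker d_G,\delta)\hookrightarrow(\widetilde{\Omega}_{G,\textmd{bas}},\delta)$ are quasi-isomorphisms. The first is the content of the left arrow in Theorem \ref{eq-formality}: the inclusion $\Omega^\delta_{G,\textmd{bas}}\hookrightarrow\Omega_{G,\textmd{bas}}$ induces an isomorphism on $d_G$-cohomology, and this survives the flat base change $-\otimes_{S(\mathfrak{g}^*)^G}\mathbb{F}$ (localization is exact). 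The second inclusion is where the real work lies, and I expect it to be the main obstacle: one needs the $\delta$-cohomology analogue. The natural route is the equivariant $d_G\delta$-lemma, Theorem \ref{dG-delta-lemma}, which gives $\textmd{im}\,d_G\cap\ker\delta=\ker d_G\cap\textmd{im}\,\delta=\textmd{im}\,d_G\delta$ on $\Omega_{G,\textmd{bas}}$ (and hence, by base change, on $\widetilde{\Omega}_{G,\textmd{bas}}$); a standard diagram chase then shows that every $\delta$-closed form is $\delta$-cohomologous to a $d_G$-closed one and that a $d_G$-closed, $\delta$-exact form is $\delta$-exact through $d_G$-closed forms, which is exactly the statement that $(\ker d_G,\delta)\hookrightarrow(\widetilde{\Omega}_{G,\textmd{bas}},\delta)$ is a quasi-isomorphism. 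This is the equivariant, foliated counterpart of the symmetric roles of $d$ and $\delta$ that underlie Theorem \ref{formality-basic-complex}, and it is the one place where we genuinely use the full strength of Theorem \ref{dG-delta-lemma} rather than just Theorem \ref{eq-formality}.

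With conditions (1)--(3) in hand, Theorem \ref{thm7.3} produces a canonical formal Frobenius manifold structure on $H(\widetilde{\Omega}_{G,\textmd{bas}},d_G)=\widetilde{H}_G(M,\mathcal{F})$, which is the assertion. The only genuinely new verification beyond citing earlier results is the $\delta$-side quasi-isomorphism in condition (3); everything else is assembling Proposition \ref{dGBV-algebra}, Theorem \ref{eq-formality}, Lemma \ref{integral-identity-on-dGBV}, and Lemma \ref{non-degeneracy}.
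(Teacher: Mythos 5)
Your proposal is correct and follows essentially the same route as the paper: conditions (1) and (2) of Theorem \ref{thm7.3} come from Theorem \ref{eq-formality}, Lemma \ref{integral-identity-on-dGBV} and Lemma \ref{non-degeneracy}, while the only real work is condition (3), where the $d_G$-side quasi-isomorphism follows from Theorem \ref{eq-formality} and the $\delta$-side one from the equivariant $d_G\delta$-lemma (Theorem \ref{dG-delta-lemma}). The ``standard diagram chase'' you sketch for the $\delta$-side is exactly the injectivity/surjectivity argument the paper writes out, so there is no gap.
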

\begin{proof}
It remains to show that the following maps induced by the inclusions
\begin{equation}\label{eq7.1}
\rho:H(\ker\,\delta,d_{G})\rightarrow H(\Omega_{G,\textmd{bas}},d_{G})
\end{equation}
\begin{equation}\label{eq7.2}
\mu:H(\ker\,d_{G},\delta)\rightarrow H(\Omega_{G,\textmd{bas}},\delta)
\end{equation}
are isomorphisms.
The fact that the map (\ref{eq7.1}) is an isomorphism is a direct consequence of Theorem \ref{eq-formality}.
Let $\alpha\in\ker\,d_{G}$ be a $\delta$-closed form which represents a class $[\alpha]$ in $H(\ker\,d_{G},\delta)$.
Suppose that $[\alpha]$ is trivial in $H(\Omega_{G,\textmd{bas}},\delta)$,
then there exists a $\beta\in\Omega_{G,\textmd{bas}}$ such that $\alpha=\delta\beta$.
By Theorem \ref{dG-delta-lemma}, we have $\alpha=d_{G}\delta\gamma$ for some $\gamma\in\Omega_{G,\textmd{bas}}$. This shows that $\alpha$ represents a trivial class in $H(\ker\,d_{G},\delta)$, and that the map (\ref{eq7.2}) is injective.

To see that (\ref{eq7.2}) is surjective, suppose that  $\alpha\in \Omega_{G,\textmd{bas}}$ such that $\delta \alpha=0$,
i.e., $[\alpha]$ is a class in $ H(\Omega_{G,\textmd{bas}},\delta)$.
Let $\gamma=d_{G}\alpha$. Then $\gamma$ is both $d_G$-exact and $\delta$-closed.
By Theorem \ref{dG-delta-lemma}, there exists a $\beta\in\Omega_{G,\textmd{bas}}$ such that $\gamma=d_{G}\delta\beta$.
Set $\tilde{\alpha}=\alpha-\delta\beta$.
Then we have that $\tilde{\alpha}\in\ker\,d_{G}$, and that $[\tilde{\alpha}]=[\alpha]$ in $H(\Omega_{G,\textmd{bas}},\delta)$.
This proves that (\ref{eq7.2}) is surjective.
By Theorem \ref{thm7.3} there exists a formal Frobenius manifold structure on $\widetilde{H}_{G}(M,\mathcal{F})$.
\end{proof}

When $G$ is a trivial group consisting of one single element, we have the following result.
\begin{cor} \label{non-eq-case}
Assume that $(M,\mathcal{F},\omega)$ is a transversely symplectic manifold that satisfies the transverse hard Lefschetz property.
If $\mathcal{F}$ is also a Riemannian foliation,
then there is a canonical formal Frobenius manifold structure on the  basic cohomology $H(M,\mathcal{F})$.
\end{cor}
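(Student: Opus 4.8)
The plan is to deduce this from Theorem \ref{construction-Frobenius-manifolds-eq-case} by specializing to the trivial group $G=\{e\}$. When $G$ is trivial one has $\mathfrak{g}=0$, so the action is automatically Hamiltonian: the moment map is the unique map $\Phi\colon M\to\{0\}=\mathfrak{g}^{*}$, and the defining condition $d\langle\Phi,\xi\rangle=\iota(\xi)\omega$ is vacuous. Moreover $S(\mathfrak{g}^{*})^{G}=\mathbb{R}$, so its fraction field $\mathbb{F}$ is $\mathbb{R}$, the equivariant basic Cartan complex $\Omega_{G,\mathrm{bas}}$ collapses to $\Omega(M,\mathcal{F})$ with $d_{G}=d$ and $\partial=0$, and $\widetilde{\Omega}_{G,\mathrm{bas}}=\Omega(M,\mathcal{F})$. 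Consequently $\widetilde{H}_{G}(M,\mathcal{F})$, as defined in (\ref{normalized-eq-basic-coho}), is exactly $H(M,\mathcal{F})$. Since the hypotheses here — $M$ closed and oriented, the transverse hard Lefschetz property, and $\mathcal{F}$ Riemannian — are precisely those of Theorem \ref{construction-Frobenius-manifolds-eq-case} with $G=\{e\}$, its conclusion furnishes a canonical formal Frobenius manifold structure on $H(M,\mathcal{F})$.

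Alternatively, and more self-containedly, I would check directly that the dGBV algebra $(\Omega(M,\mathcal{F}),\wedge,\delta,d)$ — the $G=\{e\}$ instance of Proposition \ref{dGBV-algebra} — satisfies the three hypotheses of Theorem \ref{thm7.3}. Finite-dimensionality of $H(M,\mathcal{F})$ holds because $\mathcal{F}$ is a Riemannian foliation on a closed oriented manifold (cf. \cite{KSH85}). For the nice integral: the transverse hard Lefschetz property forces $H^{2n}(M,\mathcal{F})\cong H^{0}(M,\mathcal{F})\cong\mathbb{R}$, so $\mathcal{F}$ is taut; choosing a bundle-like metric whose characteristic form $\chi_{\mathcal{F}}$ satisfies (\ref{taut-consequence}), the functional $\alpha\mapsto\int_{M}\alpha\wedge\chi_{\mathcal{F}}$ is an integral by the $G=\{e\}$ case of Lemma \ref{integral-identity-on-dGBV} and is nice by the basic Poincar\'{e} duality Theorem \ref{basic-Poincare-duality}. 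The quasi-isomorphism condition of Theorem \ref{thm7.3} is exactly Theorem \ref{formality-basic-complex}. Theorem \ref{thm7.3} then produces the formal Frobenius manifold structure.

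There is no real obstacle here, since the substantive work has already been carried out in the equivariant setting; the statement is recorded mainly for emphasis. The only points deserving a line of care are that \emph{transversely symplectic manifold} should be read as closed and oriented, so that transverse integration and the finiteness of basic cohomology are available, and that no averaging over $G$ is needed when $G$ is trivial, so a bundle-like metric adapted to (\ref{taut-consequence}) is immediate from tautness of $\mathcal{F}$.
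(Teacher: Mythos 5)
Your proposal is correct and follows the paper's own route: the paper simply records this corollary as the specialization of Theorem \ref{construction-Frobenius-manifolds-eq-case} to the trivial group $G=\{e\}$, exactly as in your first paragraph. Your additional direct verification via Theorem \ref{thm7.3} is sound but not needed, since the equivariant theorem already covers the trivial-group case.
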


\begin{rem}
When the foliation $\mathcal{F}$ is zero dimensional, we recover from Corollary \ref{non-eq-case} the Merkulov's construction \cite{Mer98} of a Frobenius manifold structure on the de Rham cohomology of a symplectic manifold with the hard Lefschetz property.
When the foliation $\mathcal{F}$ is zero dimensional, and when $M$ is a closed K\"ahler manifold,
we recover from Theorem \ref{construction-Frobenius-manifolds-eq-case} the construction by Cao-Zhou \cite{CZ99},
which produces a Frobenius manifold structure on the equivariant cohomology of a Hamiltonian action of a compact connected Lie group on a K\"ahler manifold.
Moreover, we are able to remove the assumption  in \cite{CZ99} that the action is holomorphic.
\end{rem}

%============================================================================
\section{Examples of Frobenius manifolds from transversely symplectic foliations}

In this section we  present some examples of transversely symplectic foliations which
give rise to new examples of dGBV-algebra whose cohomology admits a formal Frobenius manifold structure.
We begin with an useful observation on when a locally free action of a compact Lie group gives rise to a $G$-invariant Riemannian foliation.

\begin{lem} \label{Rie-foliation-condition}
Consider the locally free action of a compact connected Lie group $G$ on a manifold $M$.
Let $\mathfrak{g}=\text{Lie}\,(G)$, let $\mathfrak{h}$ be an ideal of $\mathfrak{g}$,
and let $\mathcal{F}$ be the foliation generated by the infinitesimal action of $\mathfrak{h}$ on $M$.
Then $\mathcal{F}$ is a $G$-invariant Riemannian foliation.
\end{lem}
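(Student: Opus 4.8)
The plan is to produce an explicit $G$-invariant bundle-like metric for $\mathcal{F}$, which immediately gives both conclusions at once. First I would fix an $\mathrm{Ad}$-invariant inner product $\langle\cdot,\cdot\rangle_{\mathfrak{g}}$ on $\mathfrak{g}$; since $G$ is compact and connected such a product exists, and it restricts to an $\mathrm{Ad}(G)$-invariant inner product on the ideal $\mathfrak{h}$ (invariance under $\mathrm{Ad}(G)$ on the subspace $\mathfrak{h}$ holds precisely because $\mathfrak{h}$ is an ideal, so $\mathrm{Ad}(G)$ preserves $\mathfrak{h}$). Because the $G$-action is locally free, the infinitesimal action map $\mathfrak{g}\to\Xi(M)$ is fiberwise injective, so the fundamental vector fields of a basis of $\mathfrak{h}$ span, at every point $x\in M$, the tangent space $P_x$ to the leaf of $\mathcal{F}$ through $x$; thus $P$ is a (trivializable-along-orbits) subbundle of $TM$ of rank $\dim\mathfrak{h}$, and $\mathcal{F}$ is genuinely a foliation.

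Next I would build the metric in two pieces. Choose an arbitrary $G$-invariant Riemannian metric $g_0$ on $M$ (average any metric over $G$). On the "vertical" distribution $P$, replace $g_0$ by the metric $g_P$ for which, at each $x$, the fundamental vector fields $X_M$ for $X\in\mathfrak{h}$ satisfy $g_P(X_M,Y_M)_x=\langle X,Y\rangle_{\mathfrak{g}}$; this is well defined because the action is locally free, and it is $G$-invariant because $L_h$ carries the fundamental field of $X$ to that of $\mathrm{Ad}(h)X$ and $\langle\cdot,\cdot\rangle_{\mathfrak{g}}$ is $\mathrm{Ad}$-invariant. Let $Q=P^{\perp}$ with respect to $g_0$ (a $G$-invariant complement since $g_0$ is $G$-invariant), and define $g=g_P$ on $P$, $g=g_0$ on $Q$, with $P\perp Q$. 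Then $g$ is a $G$-invariant metric, and $\mathcal F$ is automatically $G$-invariant since $G$ preserves $\mathfrak h$ hence the distribution $P$.

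It remains to check that $g$ is bundle-like for $\mathcal{F}$, i.e. that $g(Y,Z)$ is a basic function whenever $Y,Z$ are foliate vector fields perpendicular to the leaves (so $Y,Z$ are sections of $Q$). Equivalently I must show $\mathcal{L}(V)\,g(Y,Z)=0$ for every $V\in\Xi(\mathcal{F})$, and it suffices to test on $V=X_M$ with $X\in\mathfrak{h}$. Using $\mathcal{L}(X_M)g(Y,Z)=(\mathcal{L}(X_M)g)(Y,Z)+g(\mathcal{L}(X_M)Y,Z)+g(Y,\mathcal{L}(X_M)Z)$, the first term vanishes because $g$ is $G$-invariant and $G$ is connected (so $\mathcal{L}(X_M)g=0$); the remaining terms vanish because $Y,Z$ are foliate, so $\mathcal{L}(X_M)Y=-[Y,X_M]\in\Xi(\mathcal{F})$ is a section of $P$, which is $g$-orthogonal to $Z\in\Gamma(Q)$, and likewise for the last term. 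Hence $g(Y,Z)$ is basic and $g$ is bundle-like, so $\mathcal{F}$ is a $G$-invariant Riemannian foliation.

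The only genuinely delicate point is making sure the "vertical" part of the metric is simultaneously globally well defined, $G$-invariant, and compatible with the chosen orthogonal complement; the crucial inputs there are local freeness (for well-definedness of $g_P$) and the $\mathrm{Ad}$-invariance of $\langle\cdot,\cdot\rangle_{\mathfrak g}$ on the ideal $\mathfrak h$ (for $G$-invariance). Everything else is a routine Lie-derivative computation, and connectedness of $G$ is used exactly once, to upgrade $G$-invariance of $g$ to $\mathcal L(X_M)g=0$.
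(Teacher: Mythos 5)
Your proof is correct, and its decisive step --- the Leibniz/Lie-derivative computation showing $\mathcal{L}(X_M)\bigl(g(Y,Z)\bigr)=0$ for foliate $Y,Z$ perpendicular to the leaves, using $\mathcal{L}(X_M)g=0$ and the fact that $[X_M,Y],[X_M,Z]$ are tangent to the leaves --- is exactly the paper's argument. The only difference is that the paper runs this computation for an arbitrary $G$-invariant metric (obtained by averaging), so your construction of a special metric with vertical part $g_P$ built from an $\mathrm{Ad}$-invariant inner product on the ideal $\mathfrak{h}$ and an invariant orthogonal complement, while correct, is extra machinery the argument does not need.
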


\begin{proof}
It is clear from our assumption that the foliation $\mathcal{F}$ is $G$-invariant.
Now suppose that $g$ is an $G$-invariant Riemannian metric.
We will show that $g$ must be a bundle-like metric.
Let $Y$ and $Z$ be two foliate vector fields which are perpendicular to the leaves, and let
$\xi_M$ be the fundamental vector field generated by the infinitesimal action of $\xi\in\mathfrak{h}$.
Then we have that
\[  \mathcal{L}(\xi_M) \left(g(Y, Z)\right)=(\mathcal{L}(\xi_M)g)(Y, Z)+g([\xi_M,X], Y)+g(X, [\xi_M,Y]).\]

Note that $\mathcal{L}(\xi_M)g=0$ because $g$ is $G$-invariant.
Moreover, since $X$ is a foliate vector field, $[\xi_M, X]$ must be tangent to the leaves.
Thus $g([\xi_M,X], Y)=0$ as $Y$ is perpendicular to the leaves.
For the same reason we have that $g(X, [\xi_M,Y])=0$.
It follows that
$\mathcal{L}(\xi_M) \left(g(Y, Z)\right)=0$.
Since $\xi\in \mathfrak{h}$ is arbitrarily chosen, $g(Y, Z)$ must be a basic function.
This completes the proof.
\end{proof}

Now we discuss examples of transversely symplectic foliations to which Theorem \ref{construction-Frobenius-manifolds-eq-case}
and Corollary \ref{non-eq-case} apply.

\begin{ex}[Co-oriented contact manifolds]\label{contact-mflds}
Let $M$ be a $2n+1$ dimensional co-oriented compact contact manifold with a contact one form $\eta$ and a Reeb vector $\xi$.
Then the Reeb characteristic foliation $\mathcal{F}_{\xi}$ induced by $\xi$ is transversely symplectic,
with a transversely symplectic form $d\eta$.
If there exists a contact metric $g$ such that $\xi$ is a Killing vector field, then $(M,\eta,g)$ is called a $K$-contact manifold.
It is well known that the Reeb characteristic foliation of a $K$-contact manifold $(M,\eta,g)$ is Riemannian.
By Corollary \ref{non-eq-case}, when $M$ satisfies the transverse hard Lefschetz property, its basic cohomology will carry the structure of a formal Frobenius manifold.
In particular, this is the case when $(M,\eta,g)$ is a Sasakian manifold (cf. \cite{BG08}).
It is also noteworthy that there exist examples of compact $K$-contact manifolds
which do not admit any Sasakian structures, and which satisfy the hard Lefschetz property as introduced in \cite{CNY13,CNMY15}.
By \cite[Theorem 4.4]{L13}, these non-Sasakian $K$-contact manifolds also satisfy the transverse hard Lefschetz property.
\end{ex}

\begin{ex}[Hamiltonian actions on contact manifolds]
Let $M$ be a $2n+1$ dimensional compact contact manifold with a contact one form $\eta$ and a Reeb vector field $\xi$,
and let $G$ be a compact connected Lie group with the Lie algebra $\mathfrak{g}$.
Suppose that $G$ acts on $M$ preserving the contact one form $\eta$.
Then the $\eta$-contact moment map $\Phi: M\rightarrow \mathfrak{g}^*$,
given by
\[
\langle\Phi,X\rangle=\eta(X_M),\,\,\, \forall\, X\in \mathfrak{g},
\]
also defines a moment map for the transverse $G$-action on the transversely symplectic foliation $(M,\mathcal{F}_{\xi},d\eta)$.
Here $\langle\cdot, \cdot\rangle$ is the dual pairing between $\mathfrak{g}$ and $\mathfrak{g}^*$,
and $X_M$ is the fundamental vector field generated by $X$.

Recall that the action of $G$ is said to be of \emph{Reeb type}, if the Reeb vector $\xi$ is generated by the infinitesimal action of an element in $\mathfrak{g}$ (cf. \cite[Definition 8.4.28]{BG08}).
It is clear from Lemma \ref{Rie-foliation-condition} that when the action of $G$ is of Reeb type, the Reeb characteristic foliation $\mathcal{F}_{\xi}$ is Riemannian.
If in addition, $(M,\eta,g)$ is a Sasakian manifold, then $\mathcal{F}_{\xi}$ satisfies the transverse hard Lefschetz property.
In particular, these observations apply to the case when $(M,\eta,g)$ is a compact toric contact manifold of Reeb type.
Therefore by Theorem \ref{construction-Frobenius-manifolds-eq-case} there is a formal Frobenius manifold structure on the equivariant basic cohomology of toric contact manifolds of Reeb type.
\end{ex}

\begin{ex}[Co-symplectic manifolds \cite{Li08}]\label{co-sym}
Let $(M,\eta, \omega)$ be a $(2n+1)$-dimensional compact co-symplectic manifold.
By definition,  $\eta$ is a closed one form, and $\omega$ a closed two form $\omega$, such that $\eta\wedge \omega^n$ is a volume form.
Then the Reeb characteristic foliation $\mathcal{F}_{\xi}$ induced by the Reeb vector field $\xi$ (defined by the equations $\iota(\xi)\eta=1$ and $\iota(\xi)\omega=0$) is transversely symplectic with the transversely symplectic form $\omega$.

We claim that for any $1\leq k\leq n$, the basic form $\omega^k$ represents a non-trivial basic cohomology class in $H^{2k}(M,\mathcal{F})$.
Assume to the contrary that $[\omega^k]=0\in H^{2k}(M,\mathcal{F})$ for some $1\leq k\leq n$.
Then there exists a basic $(2n-1)$-form $\beta$ such that $\omega^n=d\beta$.
Since $d\eta=0$, we have
\[
\int_M \,\eta\wedge \omega^n=\int_M\,\eta\wedge d\beta=\int_M\, -d(\eta\wedge \beta)=0,
\]
which contradicts the fact that $\eta\wedge\omega^n$ is a volume form.
This proves our claim.

The co-symplectic manifold $M$ is called a co-K\"ahler manifold, if one can associate to $(M, \eta, \omega)$ an almost contact structure
$(\phi,\xi, \eta, g)$, where $\phi$ is an $(1,1)$-tensor, and $g$ a Riemannian metric, such that $\phi$ is parallel with respect to the Levi-Civita connection of $g$.
It is straightforward to check that if $M$ is co-K\"ahler, then the Reeb characteristic foliation $\mathcal{F}_{\xi}$ is transversely K\"ahler.
Due to the claim established in the previous paragraph, it is indeed a taut transversely K\"ahler foliation, and therefore satisfies the transverse hard Lefschetz property.
By Corollary \ref{non-eq-case}, the basic cohomology of $M$ has a structure of a formal Frobenius manifold.
\end{ex}

\begin{ex}[Symplectic orbifolds]\label{sym-orbifolds}
Let $(X,\sigma)$ be an effective symplectic orbifold of dimension $2n$.
Then the total space of the orthogonal frame orbi-bundle $\pi:\emph{Fr}(X)\rightarrow X$ is a smooth manifold on which the structure group $\textmd{O}(2n)$ acts locally free.
The form $\omega:=\pi^{*}\sigma$ is a closed 2-form on $\emph{Fr}(X)$ whose kernel gives rise to a transversely symplectic foliation $\mathcal{F}$. It follows easily from Lemma \ref{Rie-foliation-condition} that $\mathcal{F}$ is also Riemannian.
When $X$ is a K\"ahler orbifold, it was shown in \cite{WZ09} that $\emph{Fr}(X)$ satisfies the transverse hard Lefschetz property.
Since in this case, the basic differential complex of $(\emph{Fr}(X),\mathcal{F})$ is isomorphic to the de Rham differential complex on $X$,
Corollary \ref{non-eq-case} implies that there is a formal Frobenius manifold structure on the de Rham cohomology of $X$.

Now suppose that a compact connected Lie group $G$ acts on $(X,\sigma)$ in a Hamiltonian fashion with a moment map
$\Phi:X\rightarrow \mathfrak{g}^*$, where $\mathfrak{g}=\emph{Lie}(G)$.
By averaging, we may assume that there is a $G$-invariant Riemannian metric $g$ that is compatible with $\sigma$.
Then the $G$-action maps an orthogonal frame to another orthogonal frame;
and therefore, lifts to a Hamiltonian $G$-action on $(\emph{Fr}(X),\mathcal{F},\omega)$.
Analogous to the discussion in the previous paragraph,
when $X$ is K\"ahler orbifold, Theorem \ref{construction-Frobenius-manifolds-eq-case} implies that there is a formal Frobenius manifold structure on the equivariant de Rham cohomology of $X$.
\end{ex}

\begin{ex}[Symplectic quasi-folds \cite{Pra01}]\label{sym-quasi-folds}
Assume that $(X,\sigma)$ is a symplectic manifold on which the torus $T$ acts in a Hamiltonian fashion.
We denote the moment map by
$
\phi:X\rightarrow\mathfrak{t}^{*}.
$
Let $N\subset T$ be a non-closed subgroup with Lie algebra $\mathfrak{n}$ and let $a$ be a regular value of the corresponding moment map
$
\varphi:X\rightarrow\mathfrak{n}^{*}.
$
Consider the submanifold $M=\varphi^{-1}(a)\subset X$.
The $N$-action on $M$ yields a transversely symplectic foliation $\mathcal{F}$
with $\omega:=i^{*}\sigma$ being the transversely symplectic form,
where $i$ is the inclusion map of $M$ in $X$.
In this case, the leaf space $M/\mathcal{F}$ is a symplectic quasi-fold in the sense of Prato \cite{Pra01},
at least when $N$ is a connected subgroup of $T$.
It is straightforward to check that the induced $T$-action on $(M,\mathcal{F},\omega)$ is Hamiltonian.

It follows from Lemma \ref{Rie-foliation-condition} that $\mathcal{F}$ is also a Riemannian foliation.
Moreover, using an argument similar to the one given in Example \ref{co-sym}, it can be shown
that  $\mathcal{F}$ is a taut Riemannian folation.
The leaf space of $\mathcal{F}$ is called a toric quasi-fold when $\text{dim}\, (T/N)$ is half of the dimension of the leaf space.
It is shown by Ishida \cite[Theorem 5.7]{Is17} that when this is the case, $\mathcal{F}$ is a transversely K\"ahler foliation.
Therefore there exist formal Frobenius manifold structures on the basic cohomology and equivariant basic cohomology of toric quasi-folds.
\end{ex}

%=======================================================

\end{document}